\documentclass{amsart}%
\usepackage{amsfonts}
\usepackage{amsmath}
\usepackage{amssymb}
\usepackage{graphicx}%
\setcounter{MaxMatrixCols}{30}
\providecommand{\U}[1]{\protect\rule{.1in}{.1in}}
\newtheorem{theorem}{Theorem}
\theoremstyle{plain}

\newtheorem{corollary}{Corollary}

\newtheorem{definition}{Definition}
\newtheorem{example}{Example}

\newtheorem{lemma}{Lemma}

\newtheorem{proposition}{Proposition}
\newtheorem{remark}{Remark}

\numberwithin{equation}{section}
\begin{document}
\title[Semi $r$-ideals of commutative rings]{Semi $r$-ideals of commutative rings}
\author{Hani A. Khashan}
\address{Department of Mathematics, Faculty of Science, Al al-Bayt University, Al
Mafraq, Jordan.}
\email{hakhashan@aabu.edu.jo}
\author{Ece YETKIN\ CELIKEL}
\address{Department of Basic Sciences, Faculty of Engineering, Hasan Kalyoncu
University, Gaziantep, Turkey.}
\email{ece.celikel@hku.edu.tr, yetkinece@gmail.com}
\thanks{This paper is in final form and no version of it will be submitted for
publication elsewhere.}
\subjclass[2010]{ Primary 13A15, 16P40, Secondary 16D60.}
\keywords{Semiprime ideal, semiprime submodule, semi $r$-ideal, semi $n$-ideal, semi
$r$-submodule. }

\begin{abstract}
For commutative rings with identity, we introduce and study the concept of
semi $r$-ideals which is a kind of generalization of both $r$-ideals and
semiprime ideals. A proper ideal $I$ of a commutative ring $R$ is called semi
$r$-ideal if whenever $a^{2}\in I$ and $Ann_{R}(a)=0$, then $a\in I$. Several
properties and characterizations of this class of ideals are determined. In
particular, we investigate semi $r$-ideal under various contexts of
constructions such as direct products, localizations, homomorphic images,
idealizations and amalagamations rings. We extend semi $r$-ideals of rings to
semi $r$-submodules of modules and clarify some of their properties. Moreover,
we define submodules satisfying the $D$-annihilator condition and justify when
they are semi $r$-submodules.

\end{abstract}
\maketitle

\section{Introduction}

Throughout, all rings are supposed to be commutative with identity and all
modules are unital. Let $R$ be a ring and $M$ an $R$-module. We recall that a
proper ideal $I$ of a $R$ is called semiprime if whenever $a\in R$ such that
$a^{2}\in I$, then $a\in I$. It is well-known that $I$ is semiprime in $R$ if
and only if $I$ is a radical ideal, that is $I=\sqrt{I}$ where $\sqrt{I}=$
$\{x\in R:x^{m}\in I$ for some $m\in%
\mathbb{Z}
\}$. In 2015, R. Mohamadian \cite{Moh} introduced the concept of $r$-ideals of
commutative rings. A proper ideal $I$ of a ring $R$ is called an $r$-ideal
(resp. $pr$ -ideal) if whenever $a,b\in R$ such that $ab\in I$ and
$Ann_{R}(a)=0$, then $b\in I$ (resp. $b\in\sqrt{I}$) where $Ann_{R}%
(a)=\left\{  b\in R:ab=0\right\}  $. Prime and $r$-ideals are not comparable
in general;\ but it is verified that every maximal $r$-ideal in a ring is a
prime ideal, while every minimal prime ideal is an $r$ -ideal. In 2017, Tekir,
Koc and Oral \cite{Tek} introduced the concept of $n$-ideals as a special kind
of $r$-ideals by considering the set of nilpotent elements instead of zero
divisors. Recently, in \cite{Haniece3}, Celikel and Khashan generalized
$n$-ideals by defining and studying the class of semi $n$-ideals. A proper
ideal $I$ of $R$ is called a semi $n$-ideal if for $a\in R$, $a^{2}\in I$ and
$a\notin\sqrt{0}$ imply $a\in I$. Later, some other generalizations of
semiprime, $n$-ideals and $r$-ideals have been introduced, see for
example,\cite{Badawi}, \cite{Hani}-\cite{Haniece2} and \cite{Ece}.

Motivated by semiprime ideals and semi $n$-ideals, we define a proper ideal
$I$ of a ring $R$ to be a semi $r$-ideal if whenever $a\in R$ such that
$a^{2}\in I$ and $Ann_{R}(a)=0$, then $a\in I$. It is clear that the class of
semi $r$-ideals is a generalization of that of semiprime and $r$-ideals. We
start section 2 by giving some examples (see Example \ref{ex1}) to show that
this generalization is proper. Next, we determine several equivalent
characterizations of semi $r$-ideals (see Theorem \ref{char}). Among many
other results in this paper, we characterize rings in which every ideal is a
semi $r$-ideal (see Theorem \ref{Every}). We investigate semi $r$-ideals under
various contexts of constructions such as homomorphic images, quotient rings,
localizations and polynomial rings (see Propositions \ref{f} and \ref{S},
Corollary \ref{quotient}, Theorem \ref{I[x]}). Moreover, we discuss and
characterize semi $r$-ideals of cartesian product of rings (see Proposition
\ref{Ca2}, Theorems \ref{ca1} and \ref{cchar}, Corollaries \ref{cca1} and
\ref{cc}). Let $R$ and $S$ be two rings, $J$ be an ideal of $S$ and
$f:R\rightarrow S$ be a ring homomorphism. We study some forms of semi
$r$-ideals of the amalgamation ring $R\Join^{f}J$ of $R$ with $S$ along $J$
with respect to $f$ (see Theorems \ref{a1} and \ref{a2}).

Let $M$ be an $R$-module, $N$ be a submodule of $M$ and $I$ be an ideal of
$R$. As usual, we will use the notations $(N:_{R}M)$ and $(N:_{M}I)$ for the
sets $\{r\in R:rm\in N$ for all $m\in M\}$ and $\{m\in M:$ $Im\subseteq N\},$
respectively. In particular, the annihilator of an element $m\in M$ (resp.
$r\in R)$ denoted by $Ann_{R}(m)$ (resp. $Ann_{M}(r))$, is $(0:_{R}m)$ (resp.
$(0:_{M}r)$. We recall that the torsion subgroup $T(M)$\ of an $R$-module $M$
is defined as $T(M)=\{m\in M:$ there exists $0\neq r\in R$ such that $rm=0\}.$
It is easy to see that $T(M)$ is a submodule of $M$, called the torsion
submodule. A module is torsion (resp. torsion-free) if $T(M)=M$ (resp.
$T(M)=\{0\}$).

In 2009, the concept of semiprime submodules is presented. A proper submodule
is said to be semiprime if whenever $r\in R$, $m\in M$ and $r^{2}m\in N$, then
$rm\in N,$ \cite{Sar}. Afterwards, the notions of $r$-submodule and
$sr$-submodules are introduced and studied in \cite{Suat}. A proper submodule
$N$ is called an $r$-submodule (resp. $sr$-submodule) of $M$ if whenever
$rm\in N$ and $Ann_{M}(r)=0_{M}$ (resp. $Ann_{R}(m)=0$)$,$ then $m\in N$
(resp. $r\in(N:_{R}M)$). As a new generalization of above structures, in
Section 3, we define a proper submodule $N$ of $M$ to be a semi $r$-submodule
if whenever $r\in R$, $m\in M$ with $r^{2}m\in N$, $Ann_{M}(r)=0_{M}$ and
$Ann_{R}(m)=0$, then $rm\in N$. We illustrate (see Example \ref{ex5}) that
this generalization of $r$-submodules is proper. However, it is observed that
semi $r$-submodules coincides with semiprime submodules in any torsion-free
module. Then, we introduce a new condition for submodules, namely,
$D$-annihilator condition as follows: A proper submodule $N$ of an $R$-module
$M$ is said to satisfy the $D$-annihilator condition if whenever $K$ is a
submodule of $M$ and $r\in R$ such that $rK\subseteq N$ and $Ann_{M}(r)=0_{M}%
$, then either $K\subseteq N$ or $K\cap T(M)=\left\{  0_{M}\right\}  $. By
using this condition, we totally characterize semi $r$-submodules of finitely
generated faithful multiplication $R$-modules (see Proposition \ref{eqM},
Theorems \ref{IM} and \ref{IN}, Corollary \ref{(N:M)}).

We recall that the idealization of an $R$-module $M$ denoted by $R(+)M$, is
the commutative ring $R\times M$ with coordinate-wise addition and
multiplication defined as $(r_{1},m_{1})(r_{2},m_{2})=(r_{1}r_{2},r_{1}%
m_{2}+r_{2}m_{1})$. For an ideal $I$ of $R$ and a submodule $N$ of $M$,
$I(+)N$ is an ideal of $R(+)M$ if and only if $IM\subseteq N$. It is well
known from \cite{Ande} that
\[
zd(R(+)M)=\left\{  (r,m)|\text{ }r\in zd(R)\cup Z(M)\text{, }m\in M\right\}
\]
In Proposition \ref{Ide}, we clarify the relation between semi $r$-ideals of
the idealization ring $R(+)M$ and those of $R$ which enables us to build some
interesting examples of semi $r$-ideals.

Let $f:R_{1}\rightarrow R_{2}$ be a ring homomorphism, $J$ be an ideal of
$R_{2}$, $M_{1}$ be an $R_{1}$-module, $M_{2}$ be an $R_{2}$-module and
$\varphi:M_{1}\rightarrow M_{2}$ be an $R_{1}$-module homomorphism. The
subring
\[
R_{1}\Join^{f}J=\left\{  (r,f(r)+j):r\in R_{1}\text{, }j\in J\right\}
\]
of $R_{1}\times R_{2}$ is called the amalgamation of $R_{1}$ and $R_{2}$ along
$J$ with respect to $f$. In \cite{Rachida}, the amalgamation of $M_{1}$ and
$M_{2}$ along $J$ with respect to $\varphi$ is defined as%

\[
M_{1}\Join^{\varphi}JM_{2}=\left\{  (m_{1},\varphi(m_{1})+m_{2}):m_{1}\in
M_{1}\text{ and }m_{2}\in JM_{2}\right\}
\]
which is an $(R_{1}\Join^{f}J)$-module. The last section is devoted to clarify
semi $r$-submodules of the amalgamation of modules.

\section{Properties of semi $r$-ideals}

This section deals with many properties of semi $r$-ideals. We justify the
relations among the concepts of semiprime ideals, semi $n$-ideals and our new
class of ideals. Moreover, several characterizations and examples are
presented. In particular, we characterize rings in which every ideal is a semi
$r$-ideal.

\begin{definition}
Let $I$ be a proper ideal of a ring $R$. $I$ is called a semi $r$-ideal of $R$
if whenever $a\in R$ such that $a^{2}\in I$ and $Ann_{R}(a)=0$, then $a\in I$.
\end{definition}

For any non-zero subset $A$ of a ring $R$, we note that $Ann_{R}(A)$ is a semi
$r$-ideal of $R$. It is clear that the classes of semiprime ideals, $r$-ideals
and semi $n$-ideals are contained in the class of semi $r$-ideals. However, in
general these containments are proper as we illustrate in the following examples.

\begin{example}
\label{ex1}Let $p$ and $q$ be prime integers.

\begin{enumerate}
\item Any non-zero semiprime ideal in an integral domain is a semi $r$-ideal
that is not an $r$-ideal.

\item In the ring $%
\mathbb{Z}
_{p^{2}q}$, the ideal $\left\langle \overline{p^{2}}\right\rangle $ is a semi
$r$-ideal that is not a semi $n$-ideal.

\item The zero ideal of a ring $R$ is always a semi $r$-ideal but it is not a
semiprime ideal unless $R$ is a semiprime ring.

\item Every ideal of a Boolean ring (a ring of which every element is
idempotent) is semi $r$-ideal. Consider the ideal $I=0\times0\times%
\mathbb{Z}
_{2}$ of the Boolean ring $%
\mathbb{Z}
_{2}\times%
\mathbb{Z}
_{2}\times%
\mathbb{Z}
_{2}$. Then $I$ is a semi $r$-ideal that is not prime.

\item In general pr-ideals and semi r-ideals are not comparable. Let $T$ be a
reduced ring with subring $%
\mathbb{Z}
$ and $P$ be a nonzero minimal prime ideal in $T$ with $P\cap%
\mathbb{Z}
=(0)$. From \cite[Example 2.17]{Moh}, $J=x^{2}P[x]$ is a $pr$ -ideal of the
ring $R=%
\mathbb{Z}
+xT[x]$. Choose an element $0\neq p\in P$. Then $(xp)^{2}\in J$ and
$Ann_{R}(xa)=0$ but $xa\notin J$. Thus, $J$ is not a semi $r$-ideal. Moreover,
any non-zero prime ideal in an integral domain is clearly a semi $r$-ideal
that is not a $pr$-ideal.
\end{enumerate}
\end{example}

If $I$ and $J$ are semi $r$-ideals of a ring $R$, then $IJ$ and $I+J$ need not
be so as we can see in the following example.

\begin{example}
Consider the ideals $I=\left\langle x\right\rangle $ and $J=\left\langle
x-4\right\rangle $ of the ring $R=%
\mathbb{Z}
\lbrack x]$. Then $I$ and $J$ are (semi) prime ideals and so are semi
$r$-ideals of $R$. On the other hand, $I+J=\left\langle x,x-4\right\rangle
=\left\langle x,4\right\rangle $ is not a semi $r$-ideal of $R$. Indeed,
$(2+x)^{2}\in I+J$ and $Ann_{R}(2+x)=0$, but $2+x\notin I+J$. Also,
$I^{2}=\left\langle x^{2}\right\rangle $ is not a semi $r$-ideal of $R$ as
$x^{2}\in I^{2}$ and $Ann_{R}(x)=0$, but $x\notin I^{2}$.
\end{example}

Next, we give the following characterization of semi $r$-ideals. By $zd(R)$ we
denote the set of all zero divisor elements of a ring $R$. Moreover, $reg(R)$
denotes the set $R\backslash zd(R)$.

\begin{theorem}
\label{char}Let $I$ be a proper ideal of a ring $R$ and $k$ be a positive
integer. The following statements are equivalent.
\end{theorem}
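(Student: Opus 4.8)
The plan is to list the natural reformulations of the semi $r$-ideal condition and prove their equivalence in a cycle. I expect the statement to read roughly: for a proper ideal $I$ of $R$ and a positive integer $k$, the following are equivalent: (1) $I$ is a semi $r$-ideal; (2) whenever $a^{2}\in I$ and $a\in reg(R)$, then $a\in I$; (3) whenever $a^{k}\in I$ (or $a^{2k}\in I$) and $a\in reg(R)$, then $a\in I$; (4) $reg(R)\cap\sqrt{I}\subseteq I$, equivalently $\sqrt{I}\cap reg(R)=I\cap reg(R)$; (5) for every $a\in reg(R)$, $(I:a)=I$; (6) $I\cap reg(R)$ is a "radically closed" multiplicatively stable set in the appropriate sense, or $\sqrt{I}$ and $I$ agree on regular elements. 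Some of these may be phrased with the localization $R_{reg(R)}$ — e.g. $I$ is a semi $r$-ideal iff $I=\phi^{-1}(\sqrt{IR_{reg(R)}})$ where $\phi:R\to R_{reg(R)}$ (the total quotient ring) is the canonical map, or more simply $\sqrt{I}\cap reg(R)\subseteq I$.

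First I would set up notation: since $reg(R)$ is multiplicatively closed, $a\in reg(R)$ iff $a^{n}\in reg(R)$ for all $n\geq 1$, and $Ann_{R}(a)=0$ is by definition $a\in reg(R)$. This immediately collapses the "$a^{2}\in I$" version and the "$a^{k}\in I$" version into each other via the radical: if $a\in reg(R)$ and $a^{k}\in I$ then $a\in\sqrt{I}$, and conversely. So the core of the proof is showing $(1)\Leftrightarrow\sqrt{I}\cap reg(R)\subseteq I$. The forward direction: take $a\in\sqrt{I}\cap reg(R)$, so $a^{m}\in I$ for some $m$; I would argue by downward induction on $m$ that $a\in I$. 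If $m\leq 1$ done; if $m\geq 2$, write $a^{m}=(a^{\lceil m/2\rceil})^{2}\cdot(\text{unit-free adjustment})$ — more cleanly, $b=a^{\lceil m/2\rceil}$ satisfies $b^{2}\in I$ and $b\in reg(R)$, so by (1) $b\in I$, hence $a^{\lceil m/2\rceil}\in I$ with $\lceil m/2\rceil<m$, and we iterate down to exponent $1$. The reverse direction is immediate: $a^{2}\in I$ and $a\in reg(R)$ give $a\in\sqrt{I}\cap reg(R)\subseteq I$.

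For the remaining equivalences I would close the cycle: from $\sqrt{I}\cap reg(R)\subseteq I$ to the colon-ideal form, note $a\in reg(R)$ implies $(I:a^{\infty})\subseteq\sqrt{I}$ restricted appropriately; more directly, if $x\in(I:a)$ with $a\in reg(R)$ then... actually the cleanest colon statement is: $I$ is a semi $r$-ideal iff for every $a\in reg(R)$, $\sqrt{(I:a)}\cap$ ... — I would be careful here and likely just state $(I:a)$ is itself a semi $r$-ideal or that $a^{2}\in I,a\in reg(R)$ forces equality, picking whichever auxiliary items the theorem actually lists and threading the implications $(1)\Rightarrow(3)\Rightarrow(4)\Rightarrow\cdots\Rightarrow(1)$ through the radical characterization. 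The main obstacle I anticipate is purely bookkeeping: making sure the exponent juggling ($a^{k}$ versus $a^{2}$ versus $a^{2^{j}}$) is done uniformly so that the "for a positive integer $k$" quantifier is handled correctly — in particular that the equivalence holds for \emph{every} fixed $k$ simultaneously, which works precisely because regularity is preserved under taking powers and $a^{k}\in I\Rightarrow a\in\sqrt{I}$ regardless of $k$. No deep ideas are needed; the content is recognizing that on the multiplicatively closed set $reg(R)$, membership in $I$ and membership in $\sqrt{I}$ can only differ by the failure of the semi $r$-condition.
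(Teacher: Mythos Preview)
Your core argument---halving the exponent via $b=a^{\lceil m/2\rceil}$ to push $a^{m}\in I$ down to $a\in I$ for regular $a$, and the radical reformulation $\sqrt{I}\cap reg(R)\subseteq I$---is exactly the paper's approach (the paper splits into even/odd cases rather than writing $\lceil m/2\rceil$, but this is cosmetic). The paper's actual list of equivalences is: (1) the definition; (2) the definition with the extra hypothesis $a^{2}\neq 0$ (trivially equivalent, since $a^{2}=0$ with $Ann_{R}(a)=0$ forces $a=0$); (3) your $a^{k}$ version; (4) $\sqrt{I}\subseteq zd(R)\cup I$, i.e.\ your (4).

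One genuine caution: your speculative condition (5), that $(I:a)=I$ for every $a\in reg(R)$, is \emph{not} equivalent to $I$ being a semi $r$-ideal---it is the strictly stronger $r$-ideal condition (it says $ab\in I$, $a$ regular $\Rightarrow b\in I$, with no square involved). The paper does not include any colon-ideal or total-quotient-ring formulation in this theorem, and you should drop (5) and (6) from your list rather than try to thread them into the cycle.
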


\begin{enumerate}
\item $I$ is a semi $r$-ideal of $R.$

\item Whenever $a\in R$ with $0\neq a^{2}\in I$ and $Ann_{R}(a)=0$, then $a\in
I$.

\item Whenever $a\in R$ with $a^{k}\in I$ and $Ann_{R}(a)=0$, then $a\in I$.

\item $\sqrt{I}\subseteq zd(R)\cup I$.
\end{enumerate}

\begin{proof}
(1)$\Leftrightarrow$(2). Suppose $(2)$ holds and let $a\in R$ such that
$a^{2}\in I$ and $Ann_{R}(a)=0$. If $a^{2}=0$, then $a=0$ and the result
follows obviously. If $a^{2}\neq0$, then we are also done by (2). The converse
part is obvious.

(1)$\Rightarrow$(3). Suppose $a^{k}\in I$ and $Ann_{R}(a)=0$ for $a\in R$. We
use the mathematical induction on $k$. If $k\leq2$, then the claim is clear.
We now assume that (3) holds for all $2<t<k$ and show that it is also true for
$k.$ Suppose $k$ is even, say, $k=2m$ for some positive integer $m.$ Since
$a^{k}=(a^{m})^{2}\in I$ and clearly $Ann_{R}(a^{m})=0$, then $a^{m}\in I$ as
$I$ is a semi $r$-ideal. By the induction hypothesis, we conclude that $a\in
I$ as needed. Suppose $k$ is odd, so that $k+1=2s$ for some $s<k$. Then
similarly, we have $\left(  a^{s}\right)  ^{2}\in I$ and $Ann_{R}(a^{s})=0$
which imply that $a^{s}\in I$ and again by the induction hypothesis, we
conclude $a\in I$.

(3)$\Rightarrow$(4). Let $a\in\sqrt{I}$. Then $a^{k}\in I$ for some $k\geq1$
and so by (3) $a\in zd(R)$ or $a\in I$. Thus, $\sqrt{I}\subseteq zd(R)\cup I$.

(4)$\Rightarrow$(1). Straightforward.
\end{proof}

\begin{corollary}
\label{coro}Let $I$ be a semi $r$-ideal of a ring $R$ and $k$ be a positive
integer. If $J$ is an ideal of $R$ with $J^{k}\subseteq I$ and $J\cap
zd(R)=\left\{  0\right\}  $, then $J\subseteq I$.
\end{corollary}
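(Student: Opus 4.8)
The plan is to reduce the statement to a single application of Theorem \ref{char}, specifically the equivalence (1)$\Leftrightarrow$(3). I would take an arbitrary element $a\in J$ and show $a\in I$; since $a$ was arbitrary this gives $J\subseteq I$.

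First I would dispose of the trivial case: if $a=0$, then $a\in I$ since $I$ is an ideal. So assume $a\neq 0$. Because $J\cap zd(R)=\{0\}$ and $a\in J$ with $a\neq 0$, the element $a$ is not a zero divisor, i.e. $Ann_{R}(a)=0$. On the other hand, $a\in J$ forces $a^{k}\in J^{k}\subseteq I$. Thus $a$ is an element of $R$ with $a^{k}\in I$ and $Ann_{R}(a)=0$, and since $I$ is a semi $r$-ideal, condition (3) of Theorem \ref{char} applies and yields $a\in I$.

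There is essentially no obstacle here: the only point requiring a word of care is the case $a=0$ (needed because $0$ itself lies in $zd(R)$, so it is not excluded by the hypothesis $J\cap zd(R)=\{0\}$), and the observation that $J\cap zd(R)=\{0\}$ is exactly what guarantees $Ann_{R}(a)=0$ for every nonzero $a\in J$. The heavy lifting — passing from $a^{k}\in I$ to $a\in I$ — has already been done in the $(1)\Rightarrow(3)$ induction of Theorem \ref{char}, so the corollary is immediate once these hypotheses are matched up.
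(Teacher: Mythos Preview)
Your proof is correct and follows essentially the same approach as the paper: take a nonzero $a\in J$, use $J\cap zd(R)=\{0\}$ to conclude $Ann_{R}(a)=0$, note $a^{k}\in J^{k}\subseteq I$, and invoke Theorem~\ref{char}(3). The paper simply starts with ``Let $0\neq a\in J$'' rather than separating out the $a=0$ case, but otherwise the arguments are identical.
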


\begin{proof}
Suppose that $J^{k}\subseteq I$ and $J\cap zd(R)=\left\{  0\right\}  $ for
some ideal $J$ of $R.$ Let $0\neq a\in J$. From the assumption $J\cap
zd(R)=\left\{  0\right\}  $, we have $Ann_{R}(a)=0$. Thus, $a^{k}\in I$
implies that $a\in I$ by Theorem \ref{char} (3).
\end{proof}

\begin{corollary}
Let $I$ and $J$ be proper ideals of a ring $R$ such that $I\cap zd(R)=J\cap
zd(R)=\left\{  0\right\}  .$
\end{corollary}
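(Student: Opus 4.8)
The plan is to read this off from Corollary \ref{coro}. First I would record the two routine inclusions valid for any ideals $I,J$ of $R$: one has $IJ\subseteq I\cap J$, and also $(I\cap J)^{2}\subseteq IJ$ — the latter because for $a,b\in I\cap J$ one may view $a\in I$ and $b\in J$, so $ab\in IJ$, and $(I\cap J)^{2}$ is generated by such products. Also, since $I\cap J\subseteq I$, the standing hypothesis $I\cap zd(R)=\{0\}$ immediately gives $(I\cap J)\cap zd(R)=\{0\}$ as well.

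Now suppose $IJ$ is a semi $r$-ideal of $R$. I would apply Corollary \ref{coro} with the semi $r$-ideal taken to be $IJ$, the auxiliary ideal taken to be $I\cap J$, and exponent $k=2$: the required hypotheses $(I\cap J)^{2}\subseteq IJ$ and $(I\cap J)\cap zd(R)=\{0\}$ are exactly what has just been verified, so the corollary forces $I\cap J\subseteq IJ$. Combined with $IJ\subseteq I\cap J$ this yields $IJ=I\cap J$, and in particular $I\cap J$ is itself a semi $r$-ideal. The reverse implication (if $IJ=I\cap J$ and this common ideal is a semi $r$-ideal, then $IJ$ is one) needs no argument, so one concludes that $IJ$ is a semi $r$-ideal if and only if $IJ=I\cap J$ and $I\cap J$ is a semi $r$-ideal.

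I do not anticipate a genuine obstacle here: once the sandwich $(I\cap J)^{2}\subseteq IJ\subseteq I\cap J$ is noticed, the statement is a one-line consequence of Corollary \ref{coro}. The only point worth a moment's care is checking that the zero-divisor condition descends from $I$ (or from $J$) to $I\cap J$ by inclusion, so that Corollary \ref{coro} is genuinely applicable; the hypothesis being imposed symmetrically on both $I$ and $J$ is there to keep the statement symmetric rather than out of necessity for this particular implication.
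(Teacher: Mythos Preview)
Your proposal does not address the statement actually proved in the paper. The corollary as displayed is only the hypothesis; the conclusions appear in the enumerate environment immediately following it in the source:
\begin{enumerate}
\item If $I$ and $J$ are semi $r$-ideals of $R$ with $I^{2}=J^{2}$, then $I=J$.
\item If $I^{2}$ is a semi $r$-ideal, then $I^{2}=I$.
\end{enumerate}
You have instead formulated and argued for a different claim, namely that $IJ$ is a semi $r$-ideal if and only if $IJ=I\cap J$ with $I\cap J$ a semi $r$-ideal. While your argument for that auxiliary claim via the sandwich $(I\cap J)^{2}\subseteq IJ\subseteq I\cap J$ and Corollary~\ref{coro} is internally sound, it simply is not what the paper asserts or proves here.

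The paper's own proof of (1) also rests on Corollary~\ref{coro}, but applied differently: from $I^{2}=J^{2}\subseteq J$ together with $I\cap zd(R)=\{0\}$ and the fact that $J$ is a semi $r$-ideal, one obtains $I\subseteq J$; the reverse inclusion follows by symmetry. Part (2) is then immediate from (1) by taking $J=I^{2}$ (noting $I^{2}\cap zd(R)\subseteq I\cap zd(R)=\{0\}$ and $(I^{2})^{2}=(I^{2})^{2}$, or more directly $I^{2}\subseteq I^{2}$ with $I\cap zd(R)=\{0\}$ forces $I\subseteq I^{2}$). So the key step you are missing is not a technical subtlety but the correct target: you need $I=J$ from $I^{2}=J^{2}$, not an identity between $IJ$ and $I\cap J$.
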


\begin{enumerate}
\item If $I$ and $J$ are semi $r$-ideals of a ring $R$ with $I^{2}=J^{2}$,
then $I=J$.

\item If $I^{2}$ is a semi $r$-ideal, then $I^{2}=I$.
\end{enumerate}

\begin{proof}
(1) Since $I^{2}\subseteq J$ and $J\cap zd(R)=\left\{  0\right\}  $, then we
have $I\subseteq J$ by Corollary \ref{coro}. On the other hand, since
$J^{2}\subseteq I$ and $J\cap zd(R)=\left\{  0\right\}  ,$ we have $J\subseteq
I$ again by Corollary \ref{coro}, so we are done.

(2) A direct consequence of (1).
\end{proof}

We note by example \ref{ex1} that unlike $r$-ideals, if $I$ is a semi
$r$-ideal of a ring $R$, then $I$ need not be contained in $zd(R)$. Also,
clearly, semi $r$-ideals which contain the zero divisors of a ring $R$ are semiprime.

Next, we present a condition for a semi $r$-ideal to be an $r$-ideal. First,
we need the following lemma.

\begin{lemma}
\label{max}Let $S$ be a non-empty subset of $R$ where $S\cap zd(R)=\emptyset$.
If $I$ is a semi $r$-ideal of $R$ with $S\nsubseteq I$, then $(I:S)$ is a semi
$r$-ideal of $R$.
\end{lemma}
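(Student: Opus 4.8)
The plan is to verify directly that $(I:S)$ satisfies the defining condition of a semi $r$-ideal, using the characterization in Theorem \ref{char}(4) as the main tool. First I would observe that $(I:S)$ is a proper ideal of $R$: if $(I:S)=R$, then $1\in(I:S)$, which forces $S\subseteq I$, contradicting the hypothesis $S\nsubseteq I$. So it remains to show that if $a\in R$ with $a^{2}\in(I:S)$ and $Ann_{R}(a)=0$, then $a\in(I:S)$.

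Next, take such an element $a$ and let $s\in S$ be arbitrary; I want to conclude $as\in I$. From $a^{2}\in(I:S)$ we get $a^{2}s\in I$, hence $(as)^{2}=a^{2}s\cdot s\in I$ as well (since $I$ is an ideal and $s\in R$). To apply the semi $r$-ideal property of $I$ to the element $as$, I need $Ann_{R}(as)=0$. Here is where the hypotheses combine: since $Ann_{R}(a)=0$ the element $a$ is regular, and since $s\in S$ with $S\cap zd(R)=\emptyset$ the element $s$ is also regular; a product of two regular elements is regular, so $Ann_{R}(as)=0$. Therefore $(as)^{2}\in I$ together with $Ann_{R}(as)=0$ and the fact that $I$ is a semi $r$-ideal yield $as\in I$. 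Since $s\in S$ was arbitrary, $aS\subseteq I$, i.e. $a\in(I:S)$, as required.

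The argument is essentially routine once one notices the right decomposition; the only mild subtlety — the ``main obstacle,'' such as it is — is making sure the annihilator hypothesis transfers correctly to the product $as$, which is exactly where the assumption $S\cap zd(R)=\emptyset$ is used (it guarantees $s\in reg(R)$, so that $as\in reg(R)$). One could alternatively phrase the whole thing via Theorem \ref{char}(4): $\sqrt{(I:S)}\subseteq\sqrt{I}:S$ and then use $\sqrt{I}\subseteq zd(R)\cup I$ to split into the two cases, but the element-wise verification above is cleaner and avoids having to track radicals through colon ideals.
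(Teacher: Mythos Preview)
Your proof is correct and follows essentially the same route as the paper's: both take $a$ with $a^{2}\in(I:S)$ and $Ann_{R}(a)=0$, observe that $(as)^{2}\in I$ for each $s\in S$, and then use regularity of $as$ (equivalently, rule out $as\in zd(R)$) to conclude $as\in I$. The only differences are cosmetic: you verify properness of $(I:S)$ explicitly (the paper omits this), and you argue directly that $as$ is regular as a product of regulars, whereas the paper phrases the same step as ruling out the case $as\in zd(R)$; your opening mention of Theorem~\ref{char}(4) is a red herring since you never actually invoke it.
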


\begin{proof}
Let $a\in R$ such that $a^{2}\in(I:S)$ and $Ann_{R}(a)=0$. Then $(as)^{2}\in
I$ for all $s\in S$. As $I$ is a semi $r$-ideal of $R$, we have either $as\in
zd(R)$ or $as\in I$ for all $s\in S$. If $as\in zd(R)$, then $S\cap
zd(R)=\emptyset$ implies $a\in zd(R)$, a contradiction. Thus, $as\in I$ for
all $s\in S$ and so $a\in(I:S)$ as required.
\end{proof}

\begin{theorem}
If $I$ is maximal among all semi $r$-ideals of a ring $R$ contained in
$zd(R)$, then $I$ is an $r$-ideal.
\end{theorem}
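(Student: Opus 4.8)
The statement to be proved is that a semi $r$-ideal $I$ which is maximal among semi $r$-ideals contained in $zd(R)$ is automatically an $r$-ideal, i.e.\ $ab\in I$ and $Ann_{R}(a)=0$ force $b\in I$. So I would fix $a,b\in R$ with $ab\in I$ and $a\in reg(R)$, and aim to show $b\in I$. The natural object to bring in is the residual $(I:a)=(I:\{a\})$, which certainly contains $I$ and contains $b$; the whole proof reduces to showing $(I:a)=I$, and the right tool for that is the maximality hypothesis, provided I can certify that $(I:a)$ is again a semi $r$-ideal \emph{and} still sits inside $zd(R)$.

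The first half is handed to us by Lemma \ref{max}: take $S=\{a\}$. Since $a\in reg(R)$ we have $S\cap zd(R)=\emptyset$, and since $I\subseteq zd(R)$ while $a\notin zd(R)$ we get $a\notin I$, i.e.\ $S\nsubseteq I$. Lemma \ref{max} then yields that $(I:a)$ is a semi $r$-ideal of $R$ (in particular proper, which is also clear because $1\cdot a=a\notin I$). So it only remains to check the containment $(I:a)\subseteq zd(R)$ and then invoke maximality.

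For the containment, let $x\in(I:a)$, so $ax\in I\subseteq zd(R)$. Thus there is some $0\neq c\in R$ with $c(ax)=0$, hence $a(cx)=0$; because $Ann_{R}(a)=0$ this forces $cx=0$, and since $c\neq 0$ we conclude $x\in zd(R)$. Therefore $(I:a)$ is a semi $r$-ideal contained in $zd(R)$ with $I\subseteq(I:a)$, so maximality of $I$ gives $(I:a)=I$. Finally $ab\in I$ means $b\in(I:a)=I$, which is exactly the defining property of an $r$-ideal.

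\textbf{Main obstacle.} Everything is short, so the only real content is the observation that $(I:a)\subseteq zd(R)$: that is what makes the maximality hypothesis applicable at all, and it is precisely where the regularity of $a$ (i.e.\ $Ann_{R}(a)=0$) is used a second time, to transport a zero-divisor relation on $ax$ down to one on $x$. A minor point to keep straight is the convention that $0\in zd(R)$, so that the argument goes through uniformly even when $ax=0$.
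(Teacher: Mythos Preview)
Your proof is correct and follows essentially the same route as the paper's: form the residual $(I:a)$, invoke Lemma~\ref{max} to see it is a semi $r$-ideal, check it lies in $zd(R)$, and apply maximality to conclude $(I:a)=I$. The only difference is that the paper asserts $(I:_{R}a)\subseteq zd(R)$ as ``clear'' without further comment, whereas you spell out the transport-of-zero-divisor argument using $Ann_{R}(a)=0$ explicitly; this is a welcome clarification, not a divergence in strategy.
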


\begin{proof}
Let $I$ be maximal among all semi $r$-ideals of a ring $R$ contained in
$zd(R)$. Suppose that $ab\in I$ and $Ann_{R}(a)=0$. Then $a\notin I\cup zd(R)$
and so $(I:_{R}a)$ is a semi $r$-ideal of $R$ by Lemma \ref{max}. Since
clearly, $(I:_{R}a)\subseteq zd(R)$ and $I\subseteq(I:_{R}a)$, then the
maximality of $I$ implies, $I=(I:_{R}a)$. Thus, $b\in I$ and $I$ is an $r$-ideal.
\end{proof}

Following \cite{Moh}, we call a ring $R$ a $uz$-ring if $R=U(R)\cup zd(R)$. It
is proved in \cite{Moh} that $R$ is a $uz$-ring if and only if every ideal in
$R$ is an $r$-ideal. In particular, a direct product of fields is an example
of a $uz$-ring. Next, we generalize this result to semi $r$-ideals.

\begin{theorem}
\label{Every}The following statements are equivalent for a ring $R$.
\end{theorem}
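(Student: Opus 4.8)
The plan is to prove the equivalences by closing a short cycle of implications anchored at the condition ``$R$ is a $uz$-ring''. Writing the conditions informally as $(1)$ every proper ideal of $R$ is a semi $r$-ideal, $(2)$ every principal ideal of $R$ is a semi $r$-ideal, and $(3)$ $R$ is a $uz$-ring (together with the clause ``every proper ideal of $R$ is an $r$-ideal'' if it is listed), the implication $(1)\Rightarrow(2)$ is immediate. The core of the argument is $(2)\Rightarrow(3)$: given $a\in reg(R)$, I would apply the hypothesis to the \emph{particular} principal ideal $I=\left\langle a^{2}\right\rangle$. Since $a^{2}\in I$ and $Ann_{R}(a)=0$, condition $(2)$ yields $a\in\left\langle a^{2}\right\rangle$, say $a=a^{2}x$ for some $x\in R$; then $a(1-ax)=0$, and regularity of $a$ forces $ax=1$, i.e.\ $a\in U(R)$. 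Hence $reg(R)\subseteq U(R)$, and since $U(R)\subseteq reg(R)$ always holds, $R=U(R)\cup zd(R)$.

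For $(3)\Rightarrow(1)$ I would simply combine the two facts already recorded earlier: by \cite{Moh} every ideal of a $uz$-ring is an $r$-ideal, and every $r$-ideal is a semi $r$-ideal; this also disposes of the ``every ideal is an $r$-ideal'' clause. Alternatively, a direct verification via Theorem \ref{char}(4) works: for a proper ideal $I$ and $a\in\sqrt{I}$, either $a\in zd(R)$ or $a\in U(R)$; in the latter case $a\in\sqrt{I}$ would make $\sqrt{I}=R$ and hence $I=R$, a contradiction, so $\sqrt{I}\subseteq zd(R)\cup I$ and $I$ is a semi $r$-ideal.

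I do not expect a genuine obstacle here; the only step with any content is $(2)\Rightarrow(3)$, and there the whole point is the right choice of test ideal. Testing against $\left\langle a\right\rangle$ is useless because the semi $r$-ideal condition then carries no information, whereas $\left\langle a^{2}\right\rangle$ makes the hypothesis $a^{2}\in I$ non-vacuous and squeezes out exactly the relation $a=a^{2}x$. Everything else is bookkeeping with the inclusions among the ideal classes and the elementary fact that a regular element annihilated by $1-ax$ must satisfy $ax=1$; if the statement contains an additional equivalent formulation such as ``every proper ideal of $R$ is contained in $zd(R)$'', it slots into the cycle by the same unit-versus-proper-ideal dichotomy used in $(3)\Rightarrow(1)$.
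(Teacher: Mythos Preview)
Your argument for the cycle through the conditions ``$uz$-ring'', ``every proper ideal is an $r$-ideal'', ``every proper ideal is a semi $r$-ideal'', and ``every proper principal ideal is a semi $r$-ideal'' is correct and essentially identical to the paper's, including the key choice of $\left\langle a^{2}\right\rangle$ as the test ideal. One small omission: since semi $r$-ideals are proper by definition, you should separately dispose of the case $\left\langle a^{2}\right\rangle=R$ (then $a^{2}$, hence $a$, is a unit); the paper does this explicitly.

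However, the paper's statement has a fifth equivalent condition that you did not anticipate: (5) \emph{every semi $r$-ideal is an $r$-ideal}. The implication $(5)\Rightarrow(1)$ does not fall to your ``unit-versus-proper-ideal dichotomy'', because (5) gives no information until you first produce a semi $r$-ideal. The paper's device is to observe that every maximal ideal, being semiprime, is automatically a semi $r$-ideal; then (5) makes each maximal ideal an $r$-ideal, hence contained in $zd(R)$ by \cite[Remark~2.3(d)]{Moh}, so every non-unit lies in $zd(R)$. That single extra observation---feeding semiprime ideals into the hypothesis---is the missing ingredient in your outline.
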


\begin{enumerate}
\item $R$ is a $uz$-ring.

\item Every proper ideal of $R$ is an $r$-ideal.

\item Every proper ideal of $R$ is a semi $r$-ideal.

\item Every proper principal ideal of $R$ is a semi $r$-ideal.

\item Every semi $r$-ideal is an $r$-ideal.
\end{enumerate}

\begin{proof}
(1)$\Rightarrow$(2). Follows by \cite[Proposition 3.4]{Moh}.

(2)$\Rightarrow$(3)$\Rightarrow$(4). Clear.

(4)$\Rightarrow$(1). Let $x\in R\backslash zd(R)$. If $\left\langle
x^{2}\right\rangle =R$, then $x\in U(R)$. Suppose $\left\langle x^{2}%
\right\rangle $ is proper in $R$. Since $x^{2}\in\left\langle x^{2}%
\right\rangle $ and $Ann_{R}(x)=0$ , then by assumption, $x\in\left\langle
x^{2}\right\rangle $. Thus, $x=rx^{2}$ for some $r\in R$ and so $rx=1$ as
$Ann_{R}(x)=0$. Thus, again $x\in U(R)$ and $R=U(R)\cup zd(R)$ as needed.

(1)$\Rightarrow$(5). Clear by (1)$\Leftrightarrow$(2).

(5)$\Rightarrow$(1). Since a maximal ideal of $R$ is clearly a semi $r$-ideal,
then by (5), every maximal ideal in $R$ is an $r$-ideal. Let $r\in R$. If
$r\notin U(R)$, then $r\in M$ for some maximal ideal $M$ of $R$ and so $r\in
zd(R)$ by \cite[Remark 2.3(d)]{Moh}. Therefore, $R=U(R)\cup zd(R)$ and $R$ is
a $uz$-ring.
\end{proof}

Next, we discuss the behavior of semi $r$-ideals under homomorphisms.

\begin{proposition}
\label{f}Let $f:R_{1}\rightarrow R_{2}$ be a ring homomorphism. The following
statements hold.
\end{proposition}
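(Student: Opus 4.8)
The plan is to treat every clause by the same template: unwind the definition of semi $r$-ideal on the appropriate side, transport the hypotheses "$a^{2}$ lies in the ideal" and "$Ann(a)=0$" across $f$, invoke the semi $r$-ideal assumption where it is available, and transport the conclusion back. For the contraction statement --- that $f^{-1}(J)$ is a semi $r$-ideal of $R_{1}$ whenever $J$ is a semi $r$-ideal of $R_{2}$ (under the stated hypotheses on $f$ and with $f^{-1}(J)$ proper) --- I would start from $a\in R_{1}$ with $a^{2}\in f^{-1}(J)$ and $Ann_{R_{1}}(a)=0$. Then $f(a)^{2}=f(a^{2})\in J$, so the whole weight of the argument falls on showing $Ann_{R_{2}}(f(a))=0$ from $Ann_{R_{1}}(a)=0$; once this is secured, $f(a)\in J$ by hypothesis and hence $a\in f^{-1}(J)$, as needed.

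For the extension statement --- that $f(I)$ is a semi $r$-ideal of $R_{2}$ when $f$ is surjective, $I$ is a semi $r$-ideal of $R_{1}$, and $\ker f\subseteq I$ --- the argument is dual. Given $y\in R_{2}$ with $y^{2}\in f(I)$ and $Ann_{R_{2}}(y)=0$, write $y=f(a)$ by surjectivity; then $f(a^{2})\in f(I)$ together with $\ker f\subseteq I$ forces $a^{2}\in I$, one verifies $Ann_{R_{1}}(a)=0$ (again the delicate point), so $a\in I$ and therefore $y=f(a)\in f(I)$. Properness of $f(I)$ follows from properness of $I$ via $\ker f\subseteq I$ and surjectivity. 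The quotient-map special case, and any clause about the natural projections or restriction to a subring, then drop out by specializing $f$. Throughout, Theorem \ref{char} is available, so whenever it is convenient I may replace the square by an arbitrary positive power or restrict to the case $a^{2}\neq0$.

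The main obstacle is precisely the control of annihilators under $f$, and each clause has to carry a hypothesis that bridges exactly this gap: injectivity of $f$ by itself says nothing about $Ann_{R_{2}}(f(a))$, since $R_{2}$ may acquire new zero divisors, while surjectivity only gives $ac\in\ker f$ whenever $f(a)f(c)=0$, not $ac=0$. So the one genuinely non-routine step in each part is checking that the stated condition on $f$ (a monomorphism reflecting regular elements, $\ker f=0$, or $\ker f$ contained appropriately) does translate the regularity of $a$ into the regularity of its image or preimage; after that the verification is the bookkeeping sketched above. I expect the proof of each clause to be short once this annihilator translation is isolated as a preliminary observation.
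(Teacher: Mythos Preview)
Your template is right and mirrors the paper's structure: both clauses reduce to transporting regularity across $f$, and the paper too argues each part by pushing $a^{2}\in(\cdot)$ and the annihilator condition through $f$ (in contrapositive form). But for the extension clause you have omitted the decisive hypothesis. The paper assumes, in addition to surjectivity and $\ker f\subseteq I_{1}$, that $I_{1}\cap zd(R_{1})=\{0\}$, and without it your ``delicate point'' is not merely delicate but false: from $Ann_{R_{2}}(f(a))=0$ you only get $Ann_{R_{1}}(a)\subseteq\ker f$, and $\ker f\subseteq I_{1}$ alone gives no reason for this to vanish. The paper's argument (in contrapositive) is: if $x\notin I_{1}$ with $x^{2}\in I_{1}$ then $x\in zd(R_{1})$, say $xy=0$ with $y\neq 0$; since $x\neq 0$ we have $y\in zd(R_{1})$ as well, and if $f(y)=0$ then $y\in\ker f\subseteq I_{1}$, whence $y\in I_{1}\cap zd(R_{1})=\{0\}$, a contradiction. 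Thus $f(y)\neq 0$ witnesses $f(x)\in zd(R_{2})$. Your direct route can be salvaged by the same trick: if $ab=0$ with $a\neq 0\neq b$, then $f(b)=0$ forces $b\in\ker f\cap zd(R_{1})\subseteq I_{1}\cap zd(R_{1})=\{0\}$. The paper itself notes (after the quotient corollary) that this extra hypothesis is not removable, offering $4\mathbb{Z}(+)\mathbb{Z}_{4}$ in $\mathbb{Z}(+)\mathbb{Z}_{4}$ as a semi $r$-ideal whose image in the quotient by $0(+)\mathbb{Z}_{4}$ is not.

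For the contraction clause the paper simply takes $f$ to be an \emph{isomorphism}, so the annihilator translation $Ann_{R_{1}}(x)=0\Leftrightarrow Ann_{R_{2}}(f(x))=0$ is immediate in both directions and your sketch goes through unchanged.
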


\begin{enumerate}
\item If $f$ is an epimorphism, $I_{1}\subseteq Ker(f)$ and $I_{1}$ is a semi
$r$-ideal of $R_{1}$ such that $I_{1}\cap zd(R_{1})=\left\{  0\right\}  $,
then $f(I_{1})$ is a semi $r$-ideal of $R_{2}$.

\item If $f$ is an isomorphism and $I_{2}$ is a semi $r$-ideal of $R_{2}$,
then $f^{-1}(I_{2})$ is a semi $r$-ideal of $R_{1}.$
\end{enumerate}

\begin{proof}
(1) Let $a\in R_{2}$ such that $a^{2}\in f(I_{1})$ and $a\notin f(I_{1})$.
Then there exists $x\in R_{1}\backslash I_{1}$ such that $a=f(x)$. Since
$f(x^{2})=a^{2}\in f(I_{1})$, then $x^{2}\in I_{1}$ as $Ker(f)\subseteq I_{1}%
$. Now, $I_{1}$ is a semi $r$-ideal of $R_{1}$ implies $x\in zd(R_{1})$. If
$x=0$, then $a=f(x)\in zd(R_{2})$. Suppose $x\neq0$ and choose $0\neq y\in R$
such that $xy=0$. Then $f(y)\neq0$ since otherwise $y\in I_{1}\cap zd(R_{1})$,
a contradiction. Thus, again $a=f(x)\in zd(R_{2})$ and $f(I_{1})$ is a semi
$r$-ideal of $R_{2}$.

(2) Suppose $I_{2}$ is a semi $r$-ideal of $R_{2}$. Let $x\in R_{1}$ such that
$x^{2}\in f^{-1}(I_{2})$ and $x\notin f^{-1}(I_{2})$. Then $f(x^{2}%
)=f(x)^{2}\in I_{2}$ and $f(x)\notin I_{2}$ which imply $f(x)\in zd(R_{2})$.
Since $f$ is an isomorphism, then clearly $x\in zd(R_{1})$ and $f^{-1}(I_{2})$
is a semi $r$-ideal of $R_{1}.$
\end{proof}

In view of Proposition \ref{f}, we have the following result for quotient rings.

\begin{corollary}
\label{quotient}Let $I$ and $J$ be ideals of a ring $R$ with $J\subseteq I$.
\end{corollary}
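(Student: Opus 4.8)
The plan is to obtain this as a direct specialization of Proposition \ref{f} to the canonical surjection $\pi:R\rightarrow R/J$. First I would record that $\pi$ is a ring epimorphism with $Ker(\pi)=J$, and since $J\subseteq I$ the inclusion hypothesis $I_{1}\subseteq Ker(f)$ of Proposition \ref{f}(1) holds with $R_{1}=R$, $I_{1}=I$ and $R_{2}=R/J$. Assuming (as I expect the corollary to assume) that $I$ is a semi $r$-ideal of $R$ with $I\cap zd(R)=\{0\}$, Proposition \ref{f}(1) immediately gives that $\pi(I)=I/J$ is a semi $r$-ideal of $R/J$. So the forward implication is a two-line invocation, once one checks that $I$ being proper in $R$ forces $I/J$ proper in $R/J$ and that the side condition imposed on $I$ matches the $I_{1}\cap zd(R_{1})=\{0\}$ requirement verbatim.

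If the corollary also contains a converse ("$I/J$ a semi $r$-ideal of $R/J$ implies $I$ a semi $r$-ideal of $R$"), I would argue straight from the definition: take $a\in R$ with $a^{2}\in I$ and $Ann_{R}(a)=0$, pass to $\overline{a}=a+J$, note $\overline{a}^{2}\in I/J$, and try to conclude $\overline{a}\in I/J$, i.e. $a\in I$. Equivalently, via Theorem \ref{char}(4), I would lift the containment $\sqrt{I/J}\subseteq (zd(R)/J)\cup (I/J)$ back to $\sqrt{I}\subseteq zd(R)\cup I$. The obstacle here is that $Ann_{R}(a)=0$ need not yield $Ann_{R/J}(\overline{a})=\{\overline{0}\}$, so some extra hypothesis relating the zero divisors of $R/J$ to those of $R$ (for instance $zd(R/J)=zd(R)/J$, or that $J$ is itself a semi $r$-ideal contained in $zd(R)$) has to be invoked, and verifying that this hypothesis closes the gap is the only genuinely non-routine step.

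I expect the bulk of the corollary to be exactly the content of the first paragraph, in which case there is essentially no obstacle: it is a clean consequence of Proposition \ref{f}(1). The only real care needed, should a converse be included, lies in formulating and using the correct condition that makes annihilators behave well under reduction modulo $J$.
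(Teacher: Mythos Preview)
Your plan for part (1) is exactly the paper's proof: apply Proposition \ref{f}(1) to the natural surjection $\pi:R\to R/J$. (One small slip: the relevant hypothesis in Proposition \ref{f}(1) is $Ker(f)\subseteq I_{1}$, not $I_{1}\subseteq Ker(f)$; the paper's statement contains the same typo, but the proof there uses the correct direction, and your ``$J\subseteq I$'' is indeed what is needed.)

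For part (2), the paper's actual hypothesis is that $J$ is an \emph{$r$-ideal} of $R$, and your skeleton matches the paper's argument precisely: take $a$ with $a^{2}\in I$ and $Ann_{R}(a)=0$, pass to $\bar a$, and show $\bar a\notin zd(R/J)$. The paper closes the gap exactly as you anticipate---if $\bar a\,\bar b=\bar 0$ with $\bar b\neq\bar 0$, then $ab\in J$ with $b\notin J$ and $Ann_{R}(a)=0$, and the $r$-ideal property of $J$ forces $b\in J$, a contradiction. However, one of your candidate hypotheses, ``$J$ is a semi $r$-ideal contained in $zd(R)$'', would \emph{not} suffice here: the semi $r$-ideal condition only controls squares, so from $ab\in J$ with $Ann_{R}(a)=0$ you get no leverage at all. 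Your other candidate, $zd(R/J)=zd(R)/J$, would work as an alternative but is not what the paper chooses.
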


\begin{enumerate}
\item If $I$ is a semi $r$-ideal of $R$ and $I\cap zd(R)=\left\{  0\right\}
$, then $I/J$ is a semi $r$-ideal of $R/J$.

\item If $I/J$ is a semi $r$-ideal of $R/J$ and $J$ is an $r$-ideal of $R$,
then $I$ is a semi $r$-ideal of $R.$
\end{enumerate}

\begin{proof}
(1). Consider the natural epimorphism $\pi:R\rightarrow R/J$ with $Ker(\pi)=J$
and apply Proposition \ref{f}.

(2). Let $a\in R$ such that $a^{2}\in I$ and $a\notin zd(R)$. Then
$(a+J)^{2}=a^{2}+J\in I/J$. If $a+J\in zd(R/I)$, then there is $b\notin J$
such that $ab\in J$. Since $J$ is a semi $r$-ideal of $R$, we get $a\in
zd(R)$, a contradiction. Thus, $a+J\notin zd(R/I)$ which yields $a+J\in I/J$
as $I/J$ is a semi $n$-ideal of $R/J$ and so $a\in I$.
\end{proof}

If $I\cap zd(R)\neq\left\{  0\right\}  $ in Corollary \ref{quotient}(1), then
the result need not be true. For example, $4%
\mathbb{Z}
(+)%
\mathbb{Z}
_{4}$ is a semi $r$-ideal of $%
\mathbb{Z}
(+)%
\mathbb{Z}
_{4}$, see Remark \ref{Ide}. But $4%
\mathbb{Z}
(+)%
\mathbb{Z}
_{4}/0(+)%
\mathbb{Z}
_{4}\cong4%
\mathbb{Z}
$ is not a semi $r$-ideal of $%
\mathbb{Z}
(+)%
\mathbb{Z}
_{4}/0(+)%
\mathbb{Z}
_{4}\cong%
\mathbb{Z}
$. We also note that the condition " $J$ is an $r$-ideal" in Corollary
\ref{quotient}(2) is crucial. For example $8%
\mathbb{Z}
/16%
\mathbb{Z}
$ is a semi $r$-ideal of $%
\mathbb{Z}
/16%
\mathbb{Z}
$ but $8%
\mathbb{Z}
$ is not a semi $r$-ideal of $%
\mathbb{Z}
$.

In particular, Corollary \ref{quotient} holds if $J\subseteq zd(R)$.

\begin{proposition}
\label{inters}The intersection of any family of semi $r$-ideals is a semi $r$-ideal.
\end{proposition}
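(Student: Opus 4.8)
The plan is to use the characterization from Theorem \ref{char} rather than the raw definition, since condition (4) there, namely $\sqrt{I}\subseteq zd(R)\cup I$, behaves well under intersections. Let $\{I_\alpha\}_{\alpha\in\Lambda}$ be a family of semi $r$-ideals of $R$ and set $I=\bigcap_{\alpha}I_\alpha$. First I would check $I$ is proper, which is immediate since each $I_\alpha$ is proper. The main computation is to show $\sqrt{I}\subseteq zd(R)\cup I$.

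I would argue directly from the definition instead, as it is shortest: take $a\in R$ with $a^2\in I$ and $Ann_R(a)=0$. Then $a^2\in I_\alpha$ for every $\alpha$, and since each $I_\alpha$ is a semi $r$-ideal and $Ann_R(a)=0$, we get $a\in I_\alpha$ for every $\alpha$, hence $a\in\bigcap_\alpha I_\alpha=I$. This shows $I$ is a semi $r$-ideal. The only genuine point to note is the edge case where $\Lambda$ is empty or $I$ happens to equal $R$; but the statement implicitly assumes a nonempty family, and properness of any single member forces $I\subsetneq R$.

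There is essentially no obstacle here — the result is a soft closure property, and the one-line annihilator argument suffices. If one instead wanted to route through Theorem \ref{char}(4), the step "$a\in\sqrt{I}\Rightarrow a^k\in I$ for some $k$, hence $a^k\in I_\alpha$ for all $\alpha$" is equally routine, after which $a\in zd(R)\cup I_\alpha$ for each $\alpha$; splitting on whether $a\in zd(R)$ (done) or $a\notin zd(R)$ (then $a\in I_\alpha$ for all $\alpha$, so $a\in I$) again closes the argument. Either route fits in two or three lines, so I would present the direct one.
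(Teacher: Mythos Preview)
Your proof is correct and essentially the same as the paper's: the paper argues via the contrapositive (assume $a^{2}\in\bigcap_{\alpha}I_{\alpha}$ and $a\notin\bigcap_{\alpha}I_{\alpha}$, pick $\gamma$ with $a\notin I_{\gamma}$, and conclude $a\in zd(R)$), which is just the other side of your direct implication. Your remarks about properness and the alternative route through Theorem~\ref{char}(4) are fine but not needed.
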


\begin{proof}
Let $\left\{  I_{\alpha}:\alpha\in\Lambda\right\}  $ is a family of semi
$r$-ideals. Suppose $a^{2}\in\bigcap\limits_{\alpha\in\Lambda}I_{\alpha}$ and
$a\notin\bigcap\limits_{\alpha\in\Lambda}I_{\alpha}$. Then $a\notin I_{\gamma
}$ for some $\gamma\in\Lambda$. Since $I_{\gamma}$ is a semi $r$-ideal, we
have $a\in zd(R)$ and so $\bigcap\limits_{\alpha\in\Lambda}I_{\alpha}$ is a
semi $r$-ideal.
\end{proof}

Let $I$ be a proper ideal of $R.$ In the following we give the relationship
between semi $r$-ideals of a ring and those of its localization ring by using
the notation $Z_{I}(R)$\ which denotes the set $\{r\in R$ $|$ $rs\in I$ for
some $s\in R\backslash I\}$.

\begin{proposition}
\label{S}Let $S$ be a multiplicatively closed subset of a ring $R$ such that
$S\cap zd(R)=\emptyset$. Then the following hold.
\end{proposition}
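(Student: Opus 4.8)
The plan is to transport the defining condition of a semi $r$-ideal between $R$ and $S^{-1}R$ in both directions, the decisive point being that $S\cap zd(R)=\emptyset$ forces every $t\in S$ to be regular, so that passing to $S^{-1}R$ neither creates nor destroys the annihilators appearing in the definition. I would isolate at the outset the two elementary facts I will use repeatedly: for regular $t\in R$ and any $x\in R$, $tx=0$ implies $x=0$; and $x/1=0$ in $S^{-1}R$ forces $xu=0$ for some $u\in S$, hence $x=0$. Everything else is clearing denominators.

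For the forward statement (if $I$ is a semi $r$-ideal of $R$, with $I\cap S=\emptyset$ so that $S^{-1}I$ is proper, then $S^{-1}I$ is a semi $r$-ideal of $S^{-1}R$): given $a/s\in S^{-1}R$ with $(a/s)^{2}\in S^{-1}I$ and $Ann_{S^{-1}R}(a/s)=0$, I would clear denominators to get $a^{2}t\in I$ for some $t\in S$, whence $(at)^{2}=(a^{2}t)t\in I$. Then I would check $Ann_{R}(at)=0$: if $(at)x=0$ then $ax=0$ by regularity of $t$, so $(a/s)(x/1)=0$, so $x/1=0$, so $x=0$. Now $I$ being a semi $r$-ideal gives $at\in I$, hence $a/s=(at)/(st)\in S^{-1}I$. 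The converse direction is symmetric: if $S^{-1}I$ is a semi $r$-ideal of $S^{-1}R$ and $a\in R$ with $a^{2}\in I$ and $Ann_{R}(a)=0$, then $(a/1)^{2}\in S^{-1}I$ and $Ann_{S^{-1}R}(a/1)=0$ (from $(a/1)(x/s)=0\Rightarrow axt=0\Rightarrow ax=0\Rightarrow x=0$), so $a/1\in S^{-1}I$, i.e. $a\in R\cap S^{-1}I$; under the saturation hypothesis $Z_{I}(R)\cap S=\emptyset$ this contracted ideal equals $I$, giving $a\in I$.

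The denominator-chasing is routine; the one step that genuinely relies on the hypothesis is the preservation of annihilators under $(-)/1$ and its inverse, which is exactly where $S\cap zd(R)=\emptyset$ is indispensable --- without it an element could become, or cease to be, a zero divisor after localization and both implications would fail. The only subtlety I would be careful about in the statement is that a semi $r$-ideal upstairs controls only the contracted ideal $R\cap S^{-1}I$, so the converse must either be phrased for that ideal or carry the extra hypothesis $Z_{I}(R)\cap S=\emptyset$ identifying it with $I$.
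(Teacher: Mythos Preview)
Your proof is correct and follows essentially the same approach as the paper's: both clear denominators to pass from $(a/s)^{2}\in S^{-1}I$ to $(ua)^{2}\in I$ and then transport the annihilator condition across the localization using $S\cap zd(R)=\emptyset$. The only cosmetic difference is that you argue via the direct form of the definition (assume $Ann=0$, deduce membership) while the paper argues contrapositively (assume non-membership, deduce zero-divisor), which lets you avoid the small case split in part~(2); the underlying mechanism is identical.
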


\begin{enumerate}
\item If $I$ is a semi $r$-ideal of $R$ such that $I\cap S=\emptyset$, then
$S^{-1}I$ is a semi $r$-ideal of $S^{-1}R.$

\item If $S^{-1}I$ is a semi $r$-ideal of $S^{-1}R$ and $S\cap Z_{I}%
(R)=\emptyset$, then $I$ is a semi $r$-ideal of $R.$
\end{enumerate}

\begin{proof}
(1)\ Suppose for $\frac{a}{s}\in S^{-1}R$ that $\left(  \frac{a}{s}\right)
^{2}\in S^{-1}I$ \ and $\left(  \frac{a}{s}\right)  \notin S^{-1}I$. Then
there exits $u\in S$ such that $ua^{2}\in I$ and so $(ua)^{2}\in I$. Since
clearly $ua\notin I$ and $I$ is a semi $r$-ideal, we have $ua\in zd(R)$, say,
$(ua)b=0$ for some $0\neq b\in R$. Thus, $\frac{a}{s}\cdot\frac{b}{1}%
=\frac{uab}{us}=0_{S^{-1}R}$ and $\frac{b}{1}\neq0_{S^{-1}R}$ as $S\cap
zd(R)=\emptyset$. Thus, $\frac{a}{s}\in zd(S^{-1}R)$ and $S^{-1}I$ is a semi
$r$-ideal of $S^{-1}R.$

(2) Suppose $a^{2}\in I$ for $a\in R$. Since $S^{-1}I$ is a semi $n$-ideal of
$S^{-1}R$ and $\left(  \frac{a}{1}\right)  ^{2}\in S^{-1}I$, we have either
$\frac{a}{1}\in S^{-1}I$ or $\frac{a}{1}\in zd(S^{-1}R).$ If $\frac{a}{1}\in
S^{-1}I$, then there exists $u\in S$ such that $ua\in I$. Since $S\cap
zd(R)=\emptyset,$ we conclude that $a\in I.$ If $\frac{a}{1}\in zd(S^{-1}R),$
then there is $\frac{b}{t}\neq0_{S^{-1}R}$ such that $\frac{ab}{t}=\frac{a}%
{1}\cdot\frac{b}{t}=0_{S^{-1}R}$. Hence, $vab=0$ for some $v\in S$ and so
$ab=0$ as $S\cap zd(R)=\emptyset$. Thus, $a\in zd(R)$ as $b\neq0$ and $I$ is a
semi $r$-ideal of $R.$
\end{proof}

We recall that if $f=\sum\limits_{i=1}^{m}a_{i}x^{i}\in R[x]$, then the ideal
$\left\langle a_{1},a_{2},\cdots,a_{m}\right\rangle $ of $R$ generated by the
coefficients of $f$ is called the content of $f$ and is denoted by $c(f)$. It
is well known that if $f$ and $g$ are two polynomials in $R[x]$, then the
content formula $c(g)^{m+1}c(f)=c(g)^{m}c(fg)$ holds where $m$ is the degree
of $f$ , \cite[Theorem 28.1]{Gilmer}. For an ideal $I$ of $R$, it can be
easily seen that $I[x]=\left\{  f(x)\in R[x]:c(f)\subseteq I\right\}  $.

\begin{definition}
A ring $R$ is said to satisfy the property ($\ast$) if whenever $f\in
reg(R[x])$, then $c(f)\backslash\left\{  0\right\}  \subseteq reg(R)$.
\end{definition}

\begin{theorem}
\label{I[x]}Let $I$ be an ideal of a ring $R$.
\end{theorem}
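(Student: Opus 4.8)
The statement to prove (in the light of the definition of property $(\ast)$ just given and the pattern of the earlier transfer results) is that $I$ is a semi $r$-ideal of $R$ whenever $I[x]$ is a semi $r$-ideal of $R[x]$, and conversely that $I[x]$ is a semi $r$-ideal of $R[x]$ whenever $I$ is a semi $r$-ideal of $R$ and $R$ satisfies $(\ast)$. For the first implication I would argue formally: given $a\in R$ with $a^{2}\in I$ and $Ann_{R}(a)=0$, regard $a$ as a constant polynomial. Then $a^{2}\in I[x]$, and if $g=\sum b_{i}x^{i}\in R[x]$ satisfies $ag=0$ then each $b_{i}\in Ann_{R}(a)=\{0\}$, so $Ann_{R[x]}(a)=0$. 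Since $I[x]$ is a semi $r$-ideal of $R[x]$ we get $a\in I[x]$, and as the content $c(a)=\langle a\rangle$ must lie in $I$ this gives $a\in I$. No use of $(\ast)$ is needed here.

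For the converse, take $f\in R[x]$ with $f^{2}\in I[x]$ and $Ann_{R[x]}(f)=0$; the goal is $c(f)\subseteq I$, which is equivalent to $f\in I[x]$. From $f^{2}\in I[x]$ we have $c(f^{2})\subseteq I$. Applying the content formula $c(g)^{m+1}c(f)=c(g)^{m}c(fg)$ with both polynomials taken to be $f$ and $m=\deg f$ yields $c(f)^{m+2}=c(f)^{m}c(f^{2})\subseteq c(f)^{m}I\subseteq I$. On the other hand $Ann_{R[x]}(f)=0$ means precisely that $f\in reg(R[x])$, so property $(\ast)$ gives $c(f)\setminus\{0\}\subseteq reg(R)$, that is $c(f)\cap zd(R)=\{0\}$. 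Now Corollary \ref{coro}, applied with $J=c(f)$ and $k=m+2$, forces $c(f)\subseteq I$, which is what we wanted.

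The step I expect to be the crux is exactly the passage from $Ann_{R[x]}(f)=0$ to $c(f)\cap zd(R)=\{0\}$. Regularity of $f$ in $R[x]$ only guarantees, by McCoy's theorem, that no single nonzero constant of $R$ annihilates all coefficients of $f$, i.e. $Ann_{R}(c(f))=0$; this is strictly weaker than requiring each nonzero coefficient of $f$ to be a non-zero-divisor of $R$, which is what Corollary \ref{coro} needs. Property $(\ast)$ is precisely the hypothesis that bridges this gap, and one should note that without it the second implication can fail. Beyond this point, the argument is routine: the content-formula computation and the invocation of Corollary \ref{coro} are mechanical once $c(f)\cap zd(R)=\{0\}$ is available.
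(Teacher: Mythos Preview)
Your proof is correct and follows essentially the same route as the paper: embed $a$ as a constant for part (1), and for part (2) use the content formula together with property $(\ast)$ to obtain $c(f)\cap zd(R)=\{0\}$ and then invoke Corollary \ref{coro}. In fact your handling of the content formula is more careful than the paper's: the paper writes $(c(f))^{2}=c(f^{2})$, whereas you correctly extract only $c(f)^{m+2}\subseteq I$ from the Dedekind--Mertens identity and apply Corollary \ref{coro} with exponent $k=m+2$, which is all that is needed.
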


\begin{enumerate}
\item If $I[x]$ is a semi $r$-ideal of $R[x]$, then $I$ is a semi $r$-ideal of
$R$.

\item If $R$ satisfies the property ($\ast$) and $I$ is a semi $r$-ideal of
$R$, then $I[x]$ is a semi $r$-ideal of $R[x]$
\end{enumerate}

\begin{proof}
(1) Suppose $I[x]$ is a semi $r$-ideal of $R[x]$. Let $a\in R$ such that
$a^{2}\in I$ and $Ann_{R}(a)=0$. Then Clearly, $a^{2}\in I[x]$ and
$Ann_{R[x]}(a)=0$. By assumption, $a\in I[x]$ and so $a\in I$ as required.

(2) Suppose $R$ satisfies the property ($\ast$) and $I$ is a semi $r$-ideal of
$R$. Let $f(x)\in R[x]$ such that $\left(  f(x)\right)  ^{2}\in I[x]$ and
$Ann_{R[x]}(f(x))=0$. Then $c(f^{2})\subseteq I$ and so by the content
formula, $(c(f))^{2}=c(f^{2})\subseteq I$. Moreover, $c(f)\cap zd(R)=\left\{
0\right\}  $ as $R$ satisfies the property ($\ast$) and so $c(f)\subseteq I$
by Corollary \ref{coro}. It follows that $f(x)\in I[x]$ and we are done.
\end{proof}

In general, if $S$ is an overring of a ring $R$, then we may find a semi
$r$-ideal $J$ of $S$ where $J\cap R$ is not a semi $r$-ideal in $R$.

\begin{example}
Let $S=%
\mathbb{Z}
\times%
\mathbb{Z}
$ and consider the ring homomorphism $\varphi:%
\mathbb{Z}
\longrightarrow%
\mathbb{Z}
\times%
\mathbb{Z}
$ defined by $\varphi(x)=(x,0)$. Then $\varphi$ is a monomorphism and so
$R=\varphi(%
\mathbb{Z}
)$ is a domain. Now, $J=Ann_{S}((0,1))$ is a nonzero (semi) $r$-ideal in $S$.
However, clearly, $R\subseteq J$ and so $J\cap R=R$ is not a semi $r$-ideal in
$R$.
\end{example}

Let $S$ be an overring ring of a ring $R$ . Following \cite{Moh}, $R$ is said
to be essential in $S$ if $J\cap R\neq\left\{  0\right\}  $ for every nonzero
ideal $J$ of $S$ .

\begin{proposition}
Let $R\subseteq S$ be rings such that $R$ is essential in $S$. If $J$ is a
semi $r$ -ideal of $S$, then $J\cap R$ is a semi $r$-ideal in $R$.
\end{proposition}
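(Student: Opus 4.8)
The plan is to take a nonzero element $a\in R$ with $a^{2}\in J\cap R$ and $Ann_{R}(a)=0$, and show that $a\in J\cap R$; since $a\in R$ already, it suffices to prove $a\in J$. Because $J$ is a semi $r$-ideal of $S$ and $a^{2}\in J$, we get either $a\in J$ (and we are done) or $a\in zd(S)$. So the real work is to rule out $a\in zd(S)\setminus J$, and this is where the essentiality hypothesis enters.

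Suppose for contradiction that $a\in zd(S)$, so there is $0\neq s\in S$ with $as=0$. The idea is to produce a nonzero element of $R$ annihilated by $a$, contradicting $Ann_{R}(a)=0$. Consider the ideal $sS$ of $S$; it is nonzero, so by essentiality $sS\cap R\neq\{0\}$, giving some nonzero $r=st\in R$ with $t\in S$. Then $ar=a(st)=(as)t=0$, so $0\neq r\in Ann_{R}(a)$, which is the desired contradiction. Hence the case $a\in zd(S)$ cannot occur, forcing $a\in J$, and therefore $a\in J\cap R$.

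To finish I would invoke Theorem \ref{char}: it is enough to verify the semi $r$-ideal condition for $a$ with $0\neq a^{2}\in J\cap R$ (the case $a^{2}=0$ forces $a=0\in J\cap R$ since $Ann_R(a)=0$), and $J\cap R$ is proper in $R$ because $1\notin J$ (as $J$ is proper in $S$). The main obstacle — really the only subtle point — is the passage from "$a$ is a zero divisor in the big ring $S$" to "$a$ is a zero divisor in $R$"; this is exactly what essentiality is designed to handle, via the observation that the annihilator witness $s$ can be multiplied into $R$ without becoming zero. Everything else is routine bookkeeping.
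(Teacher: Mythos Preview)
Your proof is correct and follows essentially the same approach as the paper: both arguments use essentiality to show that $Ann_{R}(a)=0$ forces $Ann_{S}(a)=0$, the only cosmetic difference being that the paper intersects the ideal $Ann_{S}(a)$ itself with $R$, whereas you intersect the smaller principal ideal $sS\subseteq Ann_{S}(a)$ with $R$. Either choice works, and the remaining bookkeeping (properness of $J\cap R$, the trivial case $a^{2}=0$) is handled cleanly.
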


\begin{proof}
Let $a\in R$ such that $a^{2}\in J\cap R$ and $Ann_{R}(a)=0$. Then $a\in S$
with $a^{2}\in J$ and $Ann_{S}(a)=0$. Indeed, if $Ann_{S}(a)\neq0$, then $R$
being essential implies $Ann_{S}(a)\cap R\neq\left\{  0\right\}  $. Thus,
there exists $0\neq r\in R$ such that $r\in Ann_{S}(a)$ and so $r\in
Ann_{R}(a)$, a contradiction. Since $J$ is a semi $r$ -ideal of $S$, then
$a\in J\cap R$ and the result follows.,
\end{proof}

The rest of this section is devoted to discuss semi $r$-ideals of cartesian
products of rings and their particular subrings: the amalgamation rings.

\begin{proposition}
\label{Ca2}Let $R=R_{1}\times R_{2}$ where $R_{1}$ and $R_{2}$ are two rings
and $I_{1}$, $I_{2}$ be proper ideals of $R_{1}$ and $R_{2}$, respectively.
Then $I_{1}\times R_{2}$ (resp. $R_{1}\times I_{2}$) is a semi $r$-ideal of
$R$ if and only if $I_{1}$ is a semi $r$-ideal of $R_{1}$ (resp. $I_{2}$ is a
semi $r$-ideal of $R_{2}$)$.$
\end{proposition}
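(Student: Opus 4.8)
The plan is to prove the two directions separately, exploiting the description of zero divisors in a direct product. Recall that $zd(R_{1}\times R_{2})=\big(zd(R_{1})\cup\{\text{elements with zero first coordinate only when paired appropriately}\}\big)$; more precisely, $(a_{1},a_{2})\in zd(R_{1}\times R_{2})$ if and only if $a_{1}\in zd(R_{1})$ or $a_{2}\in zd(R_{2})$ (since $(a_{1},a_{2})(b_{1},b_{2})=(0,0)$ with $(b_{1},b_{2})\neq(0,0)$ forces $a_{1}b_{1}=0$, $a_{2}b_{2}=0$ with one of $b_{1},b_{2}$ nonzero). In particular $Ann_{R_{1}\times R_{2}}((a_{1},a_{2}))=0$ if and only if $Ann_{R_{1}}(a_{1})=0$ and $Ann_{R_{2}}(a_{2})=0$. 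This observation is the engine of both implications. I will do the argument for $I_{1}\times R_{2}$; the case $R_{1}\times I_{2}$ is symmetric and I will simply say so.

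For the forward direction, suppose $I_{1}\times R_{2}$ is a semi $r$-ideal of $R$. Let $a\in R_{1}$ with $a^{2}\in I_{1}$ and $Ann_{R_{1}}(a)=0$. Consider the element $(a,1)\in R$. Then $(a,1)^{2}=(a^{2},1)\in I_{1}\times R_{2}$, and $Ann_{R}((a,1))=Ann_{R_{1}}(a)\times Ann_{R_{2}}(1)=0\times 0=0$. Since $I_{1}\times R_{2}$ is a semi $r$-ideal, $(a,1)\in I_{1}\times R_{2}$, hence $a\in I_{1}$. Thus $I_{1}$ is a semi $r$-ideal of $R_{1}$ (it is proper by hypothesis).

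For the converse, suppose $I_{1}$ is a semi $r$-ideal of $R_{1}$. Note $I_{1}\times R_{2}$ is proper in $R$ since $I_{1}$ is proper. Let $(a_{1},a_{2})\in R$ with $(a_{1},a_{2})^{2}=(a_{1}^{2},a_{2}^{2})\in I_{1}\times R_{2}$ and $Ann_{R}((a_{1},a_{2}))=0$. The second coordinate is automatic ($a_{2}^{2}\in R_{2}$), so the only real constraint is $a_{1}^{2}\in I_{1}$; moreover $Ann_{R}((a_{1},a_{2}))=0$ forces $Ann_{R_{1}}(a_{1})=0$ by the annihilator observation above. Since $I_{1}$ is a semi $r$-ideal, $a_{1}\in I_{1}$, so $(a_{1},a_{2})\in I_{1}\times R_{2}$. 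Hence $I_{1}\times R_{2}$ is a semi $r$-ideal of $R$. The statement for $R_{1}\times I_{2}$ follows by the evident symmetry.

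I do not foresee a genuine obstacle here: the only point requiring a small amount of care is verifying the annihilator identity $Ann_{R_{1}\times R_{2}}((a_{1},a_{2}))=Ann_{R_{1}}(a_{1})\times Ann_{R_{2}}(a_{2})$, which is immediate coordinatewise, together with the trivial fact that $Ann_{R_{i}}(1)=0$ in any ring with identity. Everything else is bookkeeping about the product structure, and choosing the test element $(a,1)$ (rather than $(a,0)$, whose annihilator would be all of $0\times R_{2}$) is the one mildly clever move.
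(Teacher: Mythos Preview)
Your proof is correct and follows essentially the same approach as the paper: for the forward direction you test with the element $(a,1)$ exactly as the paper does, and for the converse you pass from $Ann_{R}((a_{1},a_{2}))=0$ to $Ann_{R_{1}}(a_{1})=0$ just as in the paper's argument. The only difference is that you spell out the annihilator identity $Ann_{R_{1}\times R_{2}}((a_{1},a_{2}))=Ann_{R_{1}}(a_{1})\times Ann_{R_{2}}(a_{2})$ and the rationale for choosing $(a,1)$ over $(a,0)$, which the paper leaves implicit.
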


\begin{proof}
Let $I_{1}\times R_{2}$ be a semi $r$-ideal of $R$ and $a\in R_{1}$ with
$a^{2}\in I_{1}$ and $Ann_{R_{1}}(a)=0.$ Then $(a,1)^{2}\in$ $I_{1}\times
R_{2}$ and $Ann_{R}(a,1)=(0,0)$ imply that $(a,1)\in I_{1}\times R_{2}$ and so
$a\in I_{1}$. Thus $I_{1}$ is a semi $r$-ideal of $R_{1}.$ Conversely, suppose
that $(a,b)^{2}\in I_{1}\times R_{2}$ and $Ann_{R}(a,b)=(0,0).$ Then $a^{2}\in
I_{1}$ and clearly $Ann_{R_{1}}(a)=0$ which implies $a\in I_{1}$. Hence,
$(a,b)\in I_{1}\times R_{2},$ so we are done. The proof of the case
$R_{1}\times I_{2}$ is similar.
\end{proof}

The following corollary generalizes Proposition \ref{Ca2}.

\begin{corollary}
\label{cca1}Let $R_{1},R_{2},$ $\cdots,R_{n}$ be rings, $R=R_{1}\times
R_{2}\times\cdots\times R_{n}$ and $I_{i}$ be a proper ideal of $R_{i}$ for
each $i=1,2,\cdots n$. Then for all $j=1,2,\cdots n$, $I=R_{1}\times
\cdots\times R_{j-1}\times I_{j}\times R_{j+1}\times\cdots\times R_{n}$ is a
semi $r$-ideal of $R$ if and only if $I_{j}$ is a semi $r$-ideal of $R_{j}$.
\end{corollary}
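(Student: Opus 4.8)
The plan is to reduce the $n$-factor statement to the two-factor case already established in Proposition \ref{Ca2}, by induction on $n$ and associativity of the direct product. First I would observe that for $n=1$ there is nothing to prove, and for $n=2$ the claim is exactly Proposition \ref{Ca2}. For the inductive step, suppose the result holds for products of fewer than $n$ rings, and fix $j$. Write $R = R_1 \times \cdots \times R_n$ and group the factors: set $R' = R_1 \times \cdots \times R_{n-1}$ so that $R \cong R' \times R_n$ as rings, with the isomorphism given by the obvious reshuffling $((r_1,\dots,r_{n-1}),r_n) \mapsto (r_1,\dots,r_n)$. Since isomorphisms preserve semi $r$-ideals (by Proposition \ref{f}(2) applied to an isomorphism and its inverse), it suffices to track where $I$ goes under this identification.

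The main case split is on whether $j = n$ or $j < n$. If $j = n$, then under $R \cong R' \times R_n$ the ideal $I = R_1 \times \cdots \times R_{n-1} \times I_n$ corresponds to $R' \times I_n$, which by Proposition \ref{Ca2} is a semi $r$-ideal of $R' \times R_n$ if and only if $I_n$ is a semi $r$-ideal of $R_n$; this gives exactly what we want. If $j < n$, then under the same identification $I$ corresponds to $I'' \times R_n$ where $I'' = R_1 \times \cdots \times R_{j-1} \times I_j \times R_{j+1} \times \cdots \times R_{n-1}$ is a proper ideal of $R'$. By Proposition \ref{Ca2}, $I'' \times R_n$ is a semi $r$-ideal of $R' \times R_n$ if and only if $I''$ is a semi $r$-ideal of $R'$; and by the induction hypothesis applied to the $(n-1)$-fold product $R'$ (with index $j$), $I''$ is a semi $r$-ideal of $R'$ if and only if $I_j$ is a semi $r$-ideal of $R_j$. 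Chaining these equivalences closes the induction.

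I do not expect any genuine obstacle here: the proof is purely a bookkeeping argument chaining the two-factor result through an associativity isomorphism. The one point to be careful about is making sure the properness hypotheses line up — each $I_i$ is assumed proper, so $I''$ is indeed a proper ideal of $R'$ and Proposition \ref{Ca2} applies — and that the isomorphism $R \cong R' \times R_n$ is stated explicitly enough that invoking Proposition \ref{f}(2) is unambiguous. Alternatively, one could give a direct argument mirroring the proof of Proposition \ref{Ca2}: take $(a_1,\dots,a_n) \in R$ with $(a_1,\dots,a_n)^2 \in I$ and $\mathrm{Ann}_R(a_1,\dots,a_n) = 0$, note that the annihilator condition forces $\mathrm{Ann}_{R_j}(a_j) = 0$ (since $\mathrm{Ann}_R$ is computed coordinatewise) and that $a_j^2 \in I_j$, conclude $a_j \in I_j$ from the hypothesis on $I_j$, and for the converse evaluate at the element with $1$ in every coordinate except a witness in the $j$-th slot; but the inductive reduction is cleaner and shorter, so that is the route I would write up.
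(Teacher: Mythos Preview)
Your proposal is correct. The paper does not give an explicit proof of Corollary~\ref{cca1}; it simply states it as the evident generalization of Proposition~\ref{Ca2}, so your inductive reduction to the two-factor case (or the direct coordinatewise argument you sketch at the end) is precisely the routine verification the paper leaves implicit.
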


\begin{theorem}
\label{ca1}Let $R_{1}$ and $R_{2}$ be two rings, $R=R_{1}\times R_{2}$ and
$I_{1},I_{2}$ be proper ideals in $R_{1}$ and $R_{2}$, respectively.
\end{theorem}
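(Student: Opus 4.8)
The plan is to reduce everything to characterization~(4) of Theorem~\ref{char}: a proper ideal $I$ of a ring is a semi $r$-ideal exactly when $\sqrt{I}\subseteq zd(R)\cup I$. Two elementary identities about the product ring $R=R_1\times R_2$ handle all the bookkeeping. First, $\sqrt{I_1\times I_2}=\sqrt{I_1}\times\sqrt{I_2}$, which follows by passing to a common exponent. Second, $zd(R)=\big((zd(R_1)\times R_2)\cup(R_1\times zd(R_2))\big)$, equivalently $reg(R)=reg(R_1)\times reg(R_2)$, which is immediate from computing annihilators coordinatewise (a nonzero element $(c,d)$ kills $(a,b)$ as soon as one of its nonzero coordinates does). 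With these, the assertion "$I_1\times I_2$ is a semi $r$-ideal" translates into the membership statement $\sqrt{I_1}\times\sqrt{I_2}\subseteq\big((zd(R_1)\times R_2)\cup(R_1\times zd(R_2))\big)\cup(I_1\times I_2)$, which I will test one pair $(a,b)$ at a time.

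For the "if" direction I would split into the three alternatives. If $\sqrt{I_1}\subseteq zd(R_1)$, then every $(a,b)\in\sqrt{I_1}\times\sqrt{I_2}$ already has $a\in zd(R_1)$, so $(a,b)\in zd(R)$; the case $\sqrt{I_2}\subseteq zd(R_2)$ is symmetric. (Each of these two alternatives forces the corresponding factor to be a semi $r$-ideal, but imposes nothing on the other factor.) If instead $I_1$ and $I_2$ are both semi $r$-ideals, take $(a,b)\in\sqrt{I_1}\times\sqrt{I_2}$: if $a\in zd(R_1)$ or $b\in zd(R_2)$ then $(a,b)\in zd(R)$; otherwise $a\in\sqrt{I_1}\cap reg(R_1)\subseteq I_1$ and $b\in\sqrt{I_2}\cap reg(R_2)\subseteq I_2$ by characterization~(4) applied to $I_1$ and $I_2$, hence $(a,b)\in I_1\times I_2$. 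In every case the required inclusion holds.

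For the "only if" direction, assume $I_1\times I_2$ is a semi $r$-ideal and that neither of the first two alternatives holds, so I may fix $a_0\in\sqrt{I_1}\setminus zd(R_1)$ and $b_0\in\sqrt{I_2}\setminus zd(R_2)$; I then show $I_1$ and $I_2$ are both semi $r$-ideals. To see this for $I_1$, let $a\in\sqrt{I_1}$ with $a\notin zd(R_1)$ (elements of $\sqrt{I_1}\cap zd(R_1)$ already lie in $zd(R_1)\cup I_1$). Then $(a,b_0)\in\sqrt{I_1}\times\sqrt{I_2}=\sqrt{I_1\times I_2}$, while $(a,b_0)\notin zd(R)$ since neither coordinate is a zero-divisor; as $I_1\times I_2$ is a semi $r$-ideal, characterization~(4) yields $(a,b_0)\in I_1\times I_2$, so $a\in I_1$. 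Thus $\sqrt{I_1}\subseteq zd(R_1)\cup I_1$, i.e.\ $I_1$ is a semi $r$-ideal, and the argument for $I_2$ using $a_0$ is symmetric.

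I expect no computational obstacle here; the only delicate point is getting the logical structure of the "or" right. Because zero-divisors in a product are detected by a single coordinate, the failure of one factor to be a semi $r$-ideal is harmless precisely when the other factor's radical is entirely contained in its zero-divisors, which is exactly why the two "degenerate" alternatives cannot be folded into the clause "$I_1$ and $I_2$ are both semi $r$-ideals." The sanity check to keep in mind is that $\sqrt{I_1}\subseteq zd(R_1)$ really does make $I_1\times I_2$ a semi $r$-ideal for arbitrary proper $I_2$, which the first case above confirms. Should the printed statement instead package these conditions as a single biconditional with an auxiliary clause, the same three observations rearrange to fit it without change.
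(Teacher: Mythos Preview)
Your argument is correct and, in fact, proves a sharper biconditional than the paper records. The paper's Theorem~\ref{ca1} is stated as four separate implications and is proved directly from the definition: for (1) one checks $(a,b)^{2}\in I_{1}\times I_{2}$ with $Ann_{R}(a,b)=0$ forces $a\in I_{1}$, $b\in I_{2}$; for (2) one argues by contradiction, choosing witnesses $a\notin I_{1}$, $b\notin I_{2}$ to the failure of the semi $r$-property in each factor; for (3)--(4) one pairs a test element with a regular element chosen from $I_{2}\setminus zd(R_{2})$ (respectively $I_{1}\setminus zd(R_{1})$). Your route through characterization~(4) of Theorem~\ref{char} and the identities $\sqrt{I_{1}\times I_{2}}=\sqrt{I_{1}}\times\sqrt{I_{2}}$ and $reg(R)=reg(R_{1})\times reg(R_{2})$ is different in spirit: it yields the clean trichotomy ``$\sqrt{I_{1}}\subseteq zd(R_{1})$, or $\sqrt{I_{2}}\subseteq zd(R_{2})$, or both $I_{i}$ are semi $r$-ideals,'' from which all four of the paper's assertions drop out (your parenthetical that $\sqrt{I_{j}}\subseteq zd(R_{j})$ already makes $I_{j}$ a semi $r$-ideal gives (2), and since $I_{j}\nsubseteq zd(R_{j})$ rules out $\sqrt{I_{j}}\subseteq zd(R_{j})$ you recover (3)--(4) as well). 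The gain of your approach is a genuine characterization explaining the example $4\mathbb{Z}\times 0$ the paper mentions afterward; the paper's approach has the virtue of staying at the level of the definition and not invoking Theorem~\ref{char}. To match the paper's packaging you should present your ``if'' case~3 as (1), your ``only~if'' trichotomy (together with the parenthetical) as (2), and then observe that the hypothesis $I_{2}\nsubseteq zd(R_{2})$ excludes your second alternative to obtain (3), and symmetrically for (4).
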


\begin{enumerate}
\item If $I_{1}$ and $I_{2}$ are semi $r$-ideals of $R_{1}$ and $R_{2}$,
respectively, then $I=I_{1}\times I_{2}$ is a semi $r$-ideal of $R$.

\item If $I=I_{1}\times I_{2}$ is a semi $r$-ideal of $R$, then either $I_{1}$
is a semi $r$-ideal of $R_{1}$ or $I_{2}$ is a semi $r$-ideal of $R_{2}$.

\item If $I=I_{1}\times I_{2}$ is a semi $r$-ideal of $R$ and $I_{2}\nsubseteq
zd(R_{2})$, then $I_{1}$ is a semi $r$-ideal of~$R_{1}.$

\item If $I=I_{1}\times I_{2}$ is a semi $r$-ideal of $R$ and $I_{1}\nsubseteq
zd(R_{1})$, then $I_{2}$ is a semi $r$-ideal of~$R_{2}.$
\end{enumerate}

\begin{proof}
(1) Let $(a,b)\in R$ such that $(a^{2},b^{2})=(a,b)^{2}\in I$ and
$Ann_{R}(a,b)=(0,0)$. Then $a^{2}\in I_{1}$, $b^{2}\in I_{2}$ and clearly
$Ann_{R_{1}}(a)=Ann_{R_{2}}(b)=0$. Therefore, $a\in I_{1}$, $b\in I_{2}$ and
so $(a,b)\in I$ as needed.

(2).Suppose $I=I_{1}\times I_{2}$ is a semi $r$-ideal of $R$ but $I_{1}$ and
$I_{2}$ are not semi $r$-ideals of $R_{1}$ and $R_{2}$, respectively. Choose
$a\in R_{1}$ and $b\in R_{2}$ such that $a^{2}\in I_{1}$, $b^{2}\in I_{2}$,
$Ann_{R1}(a)=0$ and $Ann_{R_{2}}(b)=0$ but $a\notin I_{1}$ and $b\notin I_{2}%
$. Then $(a,b)^{2}\in I$ and clearly, $Ann_{R}(a,b)=(0,0)$. By assumption, we
have $(a,b)\in I$ which is a contradiction. Therefore, either $I_{1}$ is a
semi $r$-ideal of $R_{1}$ or $I_{2}$ is a semi $r$-ideal of $R_{2}$.

(3) Suppose $a^{2}\in I_{1}$ for some $a\in R_{1}$ with $Ann_{R_{1}}(a)=0$.
Since $I_{2}\nsubseteq Z(R_{2})$, we can choose $b\in I_{2}\cap reg(R_{2})$.
Then $(a,b)^{2}\in I$ and $Ann_{R}(a,b)=(0,0).$ It follows that $(a,b)\in
I$;\ and hence $a\in I_{1}.$

(4) is similar to (3).
\end{proof}

The converse of Theorem \ref{ca1}(1) is not true in general. For example, $4%
\mathbb{Z}
\times0$ is a semi $r$-ideal in $%
\mathbb{Z}
\times%
\mathbb{Z}
$ by Proposition \ref{red}. On the other hand, the ideal $4%
\mathbb{Z}
$ is not a semi $r$-ideals of $%
\mathbb{Z}
$.

The following corollary generalizes Theorem \ref{ca1} to any finite direct
product of rings. The proof is similar to that of Theorem \ref{ca1}.

\begin{corollary}
\label{cc}Let $R_{1},R_{2},$ $\cdots,R_{n}$ be rings, $R=R_{1}\times
R_{2}\times\cdots\times R_{n}$ and $I_{i}$ be a proper ideal of $R_{i}$ for
each $i=1,2,\cdots n$.
\end{corollary}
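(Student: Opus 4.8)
The plan is to generalize Theorem \ref{ca1} from a product of two rings to a product of $n$ rings by a routine induction on $n$, reorganizing the statement into the natural multi-factor form. First I would state the precise claim: if $I=I_1\times I_2\times\cdots\times I_n$ is a semi $r$-ideal of $R=R_1\times\cdots\times R_n$, then (a) at least one $I_j$ is a semi $r$-ideal of $R_j$; (b) if for some index $j$ we have $I_i\nsubseteq zd(R_i)$ for all $i\neq j$, then $I_j$ is a semi $r$-ideal of $R_j$; and conversely (c) if every $I_i$ is a semi $r$-ideal of $R_i$, then $I$ is a semi $r$-ideal of $R$.

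For the converse direction (c), I would argue directly rather than inductively: given $(a_1,\dots,a_n)\in R$ with $(a_1^2,\dots,a_n^2)\in I$ and $\mathrm{Ann}_R(a_1,\dots,a_n)=(0,\dots,0)$, observe that the annihilator of a tuple in a finite product is the product of the annihilators, so $\mathrm{Ann}_{R_i}(a_i)=0$ for each $i$; then $a_i^2\in I_i$ forces $a_i\in I_i$ since $I_i$ is a semi $r$-ideal, hence $(a_1,\dots,a_n)\in I$. For the forward directions, I would group $R=R_1\times(R_2\times\cdots\times R_n)=R_1\times R'$ and $I=I_1\times I'$ where $I'=I_2\times\cdots\times I_n$ and $R'=R_2\times\cdots\times R_n$, then apply Theorem \ref{ca1} together with the induction hypothesis. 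The key observation needed to make the induction on statement (b) go through is that $zd(R')=zd(R_2\times\cdots\times R_n)$ contains $I'=I_2\times\cdots\times I_n$ if and only if \emph{every} $I_i$ is contained in $zd(R_i)$ (since a tuple is a zero divisor in the product iff some coordinate is — here using that the factors are nonzero rings with identity), so $I'\nsubseteq zd(R')$ precisely when some $I_i\nsubseteq zd(R_i)$; combined with Theorem \ref{ca1}(3)--(4) this propagates the hypothesis correctly through the induction.

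I expect the main obstacle to be purely bookkeeping: making sure the statement of part (b) is phrased so that the inductive step is clean — in particular, when I peel off the first factor and apply Theorem \ref{ca1}(3), I need $I'\nsubseteq zd(R')$, which by the observation above holds as soon as one of $I_2,\dots,I_n$ escapes its zero-divisor set, and then the induction hypothesis applied to $I'$ in $R'$ (with the remaining factors satisfying the escape condition) yields that $I_j$ is a semi $r$-ideal of $R_j$. For part (a), one applies Theorem \ref{ca1}(2) to $I_1\times I'$: either $I_1$ is a semi $r$-ideal of $R_1$ and we are done, or $I'$ is a semi $r$-ideal of $R'$, in which case the induction hypothesis applied to $I'=I_2\times\cdots\times I_n$ gives that some $I_j$ ($2\le j\le n$) is a semi $r$-ideal of $R_j$. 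Since the excerpt explicitly remarks that "the proof is similar to that of Theorem \ref{ca1}," I would keep the write-up brief, stating the three parts and indicating the induction and the zero-divisor observation, without reproducing the element-chasing in full detail.
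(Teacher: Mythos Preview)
Your inductive reduction to Theorem \ref{ca1} is a valid route and differs from the paper's, which simply repeats the direct element-chasing of Theorem \ref{ca1} with $n$ coordinates instead of two (for instance, for part (3) one picks regular elements $b_j\in I_j\cap reg(R_j)$ for each $j\neq i$, places $a$ in the $i$-th slot, and tests the resulting tuple). However, your ``key observation'' about zero divisors in a product is stated with the quantifiers reversed. It is correct that a tuple is a zero divisor if and only if \emph{some} coordinate is; but from this one gets
\[
I'=I_2\times\cdots\times I_n\subseteq zd(R')\quad\Longleftrightarrow\quad \text{\emph{some} }I_j\subseteq zd(R_j),
\]
and hence $I'\nsubseteq zd(R')$ if and only if \emph{every} $I_j\nsubseteq zd(R_j)$, not merely one. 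Your claim that ``$I'\nsubseteq zd(R')$ holds as soon as one of $I_2,\dots,I_n$ escapes its zero-divisor set'' is false: if even a single $I_k$ sits inside $zd(R_k)$, then every tuple in $I'$ has its $k$-th coordinate a zero divisor, whence $I'\subseteq zd(R')$.

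This error does not actually derail your proof of part (3), because the hypothesis there is precisely that \emph{all} $I_j$ with $j\neq i$ escape $zd(R_j)$; with the corrected equivalence you still obtain $I'\nsubseteq zd(R')$ when $i=1$ and can invoke Theorem \ref{ca1}(3), while for $i>1$ the step via Theorem \ref{ca1}(4) (using $I_1\nsubseteq zd(R_1)$) followed by the induction hypothesis goes through unchanged. So the fix is simply to swap ``every'' and ``some'' in your observation; once that is done, the induction is clean and the write-up can be as brief as you intend.
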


\begin{enumerate}
\item If $I_{i}$ is a semi $r$-ideals of $R_{i}$ for each $i=1,2,\cdots n$,
then $I=I_{1}\times I_{2}\times\cdots\times I_{n}$ is a semi $r$-ideal of $R$.

\item If $I=I_{1}\times I_{2}\times\cdots\times I_{n}$ is a semi $r$-ideal of
$R$, then $I_{j}$ is a semi $r$-ideal of $R_{j}$ for at least one
$j\in\left\{  1,2,\cdots,n\right\}  $.

\item If $I=I_{1}\times I_{2}\times\cdots\times I_{n}$ is a semi $r$-ideal of
$R$ and $I_{j}\nsubseteq Z(R_{j})$ for all $j\neq i$, then $I_{i}$ is a semi
$r$-ideal of~$R_{i}.$
\end{enumerate}

\begin{lemma}
\label{red}Let $R=R_{1}\times R_{2}\times\cdots\times R_{n}$ where $R_{i}$'s
are rings and $R_{j}$ is reduced ring for some $j=1,...,n$. If $I_{i}$ is an
ideal of $R_{i}$ for all $i\neq j$, then $I=I_{1}\times\cdots\times
I_{j-1}\times0\times I_{j+1}\times\cdots\times I_{n}$ is a semi $r$-ideal of
$R$.
\end{lemma}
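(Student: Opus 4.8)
The goal is to show that $I=I_{1}\times\cdots\times I_{j-1}\times 0\times I_{j+1}\times\cdots\times I_{n}$ is a semi $r$-ideal of $R=R_{1}\times\cdots\times R_{n}$ whenever $R_{j}$ is reduced. The natural strategy is to take an arbitrary element $a=(a_{1},\ldots,a_{n})\in R$ with $a^{2}\in I$ and $\operatorname{Ann}_{R}(a)=0$, and to prove $a\in I$; since the only ``hard'' coordinate is the $j$-th one (where the ideal is $0$), everything will hinge on showing $a_{j}=0$. First I would record the two consequences of the hypotheses coordinate-wise: $a_{i}^{2}\in I_{i}$ for all $i\neq j$, $a_{j}^{2}\in 0$ (i.e.\ $a_{j}^{2}=0$), and $\operatorname{Ann}_{R}(a)=\prod_{i}\operatorname{Ann}_{R_{i}}(a_{i})=0$, which forces $\operatorname{Ann}_{R_{i}}(a_{i})=0$ for every $i$, in particular $\operatorname{Ann}_{R_{j}}(a_{j})=0$.

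\textbf{Key steps.} The crux is the $j$-th coordinate: from $a_{j}^{2}=0$ and $R_{j}$ reduced we get $a_{j}=0$ immediately (a reduced ring has no nonzero nilpotents), so in particular $a_{j}\in 0$. For the remaining coordinates $i\neq j$, I do \emph{not} need $I_{i}$ to be a semi $r$-ideal: having $a_{j}=0$ already pins down the $j$-th slot, and for $i\neq j$ I can argue directly. Actually the cleanest route is: once $a_{j}=0$, observe $a\in R_{1}\times\cdots\times R_{j-1}\times 0\times R_{j+1}\times\cdots\times R_{n}$, and this latter set is exactly $\operatorname{Ann}_{R}(e_{j})$ where $e_{j}=(0,\ldots,0,1,0,\ldots,0)$ is the $j$-th standard idempotent. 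But $I\supseteq$ ... hmm, that is not quite it either, since $I$ also requires $a_{i}\in I_{i}$. Let me instead handle the coordinates $i\neq j$ by noting that $\operatorname{Ann}_{R_{i}}(a_{i})=0$ means $a_{i}\in\operatorname{reg}(R_{i})$, so if $a_{i}\notin I_{i}$ we would need... no. The honest approach: for $i\neq j$ I genuinely cannot conclude $a_{i}\in I_{i}$ in general, so the trick must be that $\operatorname{Ann}_{R}(a)=0$ is actually impossible to satisfy when $a_{i}\notin I_{i}$ for some $i$ — but that is false too. So the real content is: the hypothesis $\operatorname{Ann}_{R}(a)=0$ combined with $a_{j}^{2}=0$ and $R_{j}$ reduced gives $a_{j}=0$; and then $e_{j}\cdot a = 0$... wait, $e_{j}a=(0,\ldots,0,a_{j},0,\ldots,0)=0$, which is consistent with $\operatorname{Ann}_{R}(a)=0$ only if $e_{j}=0$, i.e.\ $R_{j}$ is the zero ring — contradiction unless we never reach this case. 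Reconsidering: if $a_{j}=0$ then $e_{j}\in\operatorname{Ann}_{R}(a)=0$, forcing $1=0$ in $R_{j}$, so $R_{j}$ is trivial; but $R_{j}$ reduced with identity could be the zero ring only vacuously. Hence in fact the situation $a^{2}\in I$ with $\operatorname{Ann}_{R}(a)=0$ \emph{cannot occur} unless we are in a degenerate case, or — more likely the intended reading — $R_{j}$ being reduced forces $a_{j}=0$, which then forces $\operatorname{Ann}_{R}(a)\neq 0$, contradicting the hypothesis; therefore the implication ``$a^{2}\in I,\ \operatorname{Ann}_{R}(a)=0\Rightarrow a\in I$'' holds \emph{vacuously}, i.e.\ $I$ is a semi $r$-ideal because there is no such $a$ at all (recall $I$ is proper, which just needs each $I_{i}$ proper — actually we need $I_i \neq R_i$ only for it to be a proper ideal, and $0 \neq R_j$ handles the $j$-th slot).

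\textbf{The main obstacle.} The delicate point is precisely this vacuity argument: I must argue that \emph{every} $a\in R$ with $a^{2}\in I$ automatically lies in $zd(R)$, hence the antecedent $\operatorname{Ann}_{R}(a)=0$ is never met. Concretely: $a^{2}\in I$ gives $a_{j}^{2}=0$, so $a_{j}$ is nilpotent in $R_{j}$; as $R_{j}$ is reduced, $a_{j}=0$; then the element $e_{j}=(0,\ldots,1,\ldots,0)\neq 0$ (since $R_{j}\neq 0$) satisfies $a\,e_{j}=0$, so $a\in zd(R)$ and $\operatorname{Ann}_{R}(a)\neq 0$. This is clean once one spots it; the only thing to be careful about is the edge case $R_{j}=0$, which is excluded since rings here have identity $1\neq 0$ (or one simply notes $I$ proper forces the relevant structure). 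I would present the proof in exactly this order: (i) reduce to coordinates, (ii) nilpotency of $a_{j}$ via $a_{j}^{2}=0$, (iii) $a_{j}=0$ by reducedness, (iv) exhibit $e_{j}\in\operatorname{Ann}_{R}(a)\setminus\{0\}$, (v) conclude the defining implication holds vacuously, so $I$ is a semi $r$-ideal. An alternative, non-vacuous phrasing would invoke Theorem~\ref{char}(4): show $\sqrt{I}\subseteq zd(R)\cup I$ by the same computation on the $j$-th coordinate, which is perhaps the more elegant packaging and avoids any appearance of a vacuous statement.
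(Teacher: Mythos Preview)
Your proposal, despite the meandering exposition, lands on exactly the argument the paper gives: from $a^{2}\in I$ deduce $a_{j}^{2}=0$, use reducedness of $R_{j}$ to get $a_{j}=0$, and then observe $\operatorname{Ann}_{R}(a)\neq 0$ (via $e_{j}$), so the defining implication holds vacuously. The paper's proof is the same three-line argument, just stated without the false starts.
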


\begin{proof}
Let $a=$ $(a_{1},a_{2},...,a_{n})\in R$ with $a^{2}\in I$. Then $a_{j}^{2}=0$
which implies $a_{j}=0$ as $R_{j}$ is reduced. Since $Ann_{R}(a)=Ann_{R}%
(a_{1},...,a_{j-1},0,a_{j+1},...,a_{n})\neq0$, $I$ is a semi $r$-ideal of $R$.
\end{proof}

Next, we present a characterization for semi $r$-ideals of cartesian products
of domains.

\begin{theorem}
\label{cchar}Let $R_{1},R_{2},$ $\cdots,R_{n}$ $(n\geq2)$ be domains,
$R=R_{1}\times R_{2}\times\cdots\times R_{n}$ and $I_{i}$ be an ideal of
$R_{i}$ for each $i=1,2,\cdots n$. Then $I=I_{1}\times I_{2}\times\cdots\times
I_{n}$ is a semi $r$-ideal of $R$ if and only if one of the following
statements holds
\end{theorem}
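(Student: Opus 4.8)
The plan is to reduce everything to the characterization from Theorem \ref{char}(4), namely that a proper ideal $K$ of a ring $T$ is a semi $r$-ideal iff $\sqrt{K}\subseteq zd(T)\cup K$. First I would compute the two relevant sets for $R=R_{1}\times\cdots\times R_{n}$ with each $R_{i}$ a domain. Since $\sqrt{I_{1}\times\cdots\times I_{n}}=\sqrt{I_{1}}\times\cdots\times\sqrt{I_{n}}$ and, because $n\ge 2$, the zero-divisors of $R$ are exactly the tuples having at least one zero coordinate (each $R_{i}$ being a domain), the condition $\sqrt{I}\subseteq zd(R)\cup I$ translates into: for every tuple $(x_{1},\dots,x_{n})$ with $x_{i}\in\sqrt{I_{i}}$ for all $i$, either some $x_{i}=0$ or $(x_{1},\dots,x_{n})\in I$, i.e. $x_{i}\in I_{i}$ for all $i$. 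The main work is then to untangle this quantified statement into a clean finite list of mutually exclusive cases on the ideals $I_{i}$.

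The key dichotomy I expect to drive the case analysis is whether a given factor $I_{i}$ is a radical (semiprime) ideal of the domain $R_{i}$ or not, together with whether $I_{i}=0$, whether $I_{i}$ is proper, and whether $\sqrt{I_{i}}\neq 0$. Concretely: if for some index $j$ we have $I_{j}=R_{j}$ (so that factor imposes no constraint) the problem collapses to the product of the remaining $n-1$ factors, which suggests the statement should be phrased so that this case is absorbed. If every $I_{i}$ is proper, I would argue that $I$ is a semi $r$-ideal iff at most one of the $I_{i}$ fails to be semiprime, and moreover any such exceptional $I_{i}$ must have the property that all the *other* factors $I_{k}$ ($k\neq i$) satisfy $\sqrt{I_{k}}=0$ — equivalently $I_{k}=0$ — so that no "bad" tuple with all coordinates nonzero can be assembled. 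The cleanest formulation is likely: $I$ is a semi $r$-ideal of $R$ iff either (a) $I_{i}$ is a semiprime ideal of $R_{i}$ for every $i$ (the product of radical ideals case), or (b) there is exactly one index $j$ with $I_{j}$ not semiprime, and for all $i\neq j$ one has $I_{i}=0$; in case (b) one then additionally records that $I_{j}$ itself may be arbitrary proper (its failure to be radical is harmless because every competing tuple is killed by the zero coordinates), which lines up with Lemma \ref{red}.

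To prove each direction I would proceed as follows. For sufficiency, in case (a) invoke that a finite product of semiprime (radical) ideals is radical, hence $\sqrt{I}=I$ and trivially $\sqrt{I}\subseteq zd(R)\cup I$; in case (b) use Lemma \ref{red} directly, since the $j$-th slot being $0$ forces any $a$ with $a^{2}\in I$ to have $a_{j}^{2}=0$, hence $a_{j}=0$ (as $R_{j}$ is a domain), hence $Ann_{R}(a)\neq 0$, so the semi $r$-condition is vacuous. For necessity, suppose $I$ is a semi $r$-ideal and that (a) fails, so some $I_{j}$ is not semiprime: pick $x_{j}\in\sqrt{I_{j}}\setminus I_{j}$, which is automatically nonzero. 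If some other factor $I_{k}$ ($k\neq j$) had a nonzero element $x_{k}\in\sqrt{I_{k}}$ — in particular if $I_{k}\neq 0$, taking $x_{k}$ to be an appropriate power-root of a nonzero element — then I can build a tuple $x\in\sqrt{I}$, with every coordinate nonzero (fill the remaining slots, including possibly $0\notin I_{k}$ issues, by choosing $1$ in any slot where $I_{k}=R_{k}$ and a nonzero radical element otherwise), contradicting $\sqrt{I}\subseteq zd(R)\cup I$ via Theorem \ref{char}(4) since $x\notin I$ yet $x\notin zd(R)$. The subtle point — and the step I expect to be the main obstacle — is handling the slots where $I_{k}$ is a nonzero but non-radical proper ideal: I must make sure such a slot still contributes a nonzero element of $\sqrt{I_{k}}$ (it does, since $0\neq I_{k}\subseteq\sqrt{I_{k}}$) and, symmetrically, rule out having two distinct non-semiprime factors by the same tuple-construction argument. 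Bookkeeping the uniqueness of the exceptional index $j$ and the forced vanishing $I_{i}=0$ for $i\neq j$ is where the argument needs the most care; everything else is a direct translation through Theorem \ref{char}.
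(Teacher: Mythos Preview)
Your route through Theorem~\ref{char}(4) is different from the paper's (which argues directly with elements, invoking Corollary~\ref{cc}(3) for the forward direction when all factors are nonzero and Lemma~\ref{red} together with Corollary~\ref{cc}(1) for the converse) and would in principle work, but your case analysis lands on the wrong characterization. The theorem's three conditions, which sit outside the statement you were shown, are: (1) $I_j=\{0\}$ for at least one $j$; (2) after reordering, the first several factors are semi $r$-ideals and the remaining ones equal the full ring; (3) every $I_i$ is a semi $r$-ideal. Since in a domain semi $r$-ideal and semiprime coincide, (2) and (3) together amount to ``every proper factor is radical''; crucially, (1) imposes no constraint whatsoever on the other factors.

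Your condition~(b) is too strong: you claim that if some $I_j$ is not semiprime then \emph{all} other $I_i$ must vanish, and that at most one factor can fail to be radical. Neither holds. Take $n=3$, $R_i=\mathbb{Z}$, $I_1=I_2=4\mathbb{Z}$, $I_3=0$; by Lemma~\ref{red} this $I$ is a semi $r$-ideal, yet it satisfies neither your~(a) nor your~(b). The flaw is in the tuple construction for necessity: to produce $x\in\sqrt{I}$ with \emph{every} coordinate nonzero you need \emph{every} $\sqrt{I_m}$ to contain a nonzero element, i.e.\ every $I_m\neq 0$; merely knowing one other $I_k\neq 0$ does not let you fill the remaining slots. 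Run correctly, your own method gives exactly the paper's dichotomy: if every $I_m\neq 0$ then every proper $I_m$ must be radical (otherwise pick nonzero elements from each $I_m\subseteq\sqrt{I_m}$ together with the witness $x_j\in\sqrt{I_j}\setminus I_j$ to build a regular element of $\sqrt{I}\setminus I$), while if some $I_m=0$ the semi $r$-condition is vacuous.
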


\begin{enumerate}
\item $I_{j}=\{0\}$ for at least one $j\in\left\{  1,2,\cdots,n\right\}  $.

\item There exists $j\in\left\{  1,2,\cdots n\right\}  $ such that $I_{i}$ is
a semi $r$-ideal of $R_{i}$ for all $i=1,\cdots,j$ and $I_{i}=$ $R_{i}$ for
all $i=j+1,\cdots,n$.

\item $I_{i}$ is a semi $r$-ideals of $R_{i}$ for each $i=1,2,\cdots n$.
\end{enumerate}

\begin{proof}
Suppose $I=I_{1}\times I_{2}\times\cdots\times I_{n}$ is a semi $r$-ideal of
$R.$ Suppose that all $I_{i}$'s are nonzero. If for all $i\in\left\{
1,2,\cdots n\right\}  $, $I_{i}$ is proper in $R_{i}$, then $I_{i}$ is a semi
$r$-ideals of $R_{i}$ by Corollary \ref{cc}(3). Without loss of generality
assume that $I_{1},...,I_{j}$ are proper in $R_{1},\cdots,R_{j}$, respectively
and $I_{i}=R_{i}$ for all $i\in\{j+1,...,n\}.$ For each $i\in\{2,...,j\}$,
choose a nonzero element $b_{i}\in I_{i}.$ Let $a\in R_{1}$ such that
$a^{2}\in I_{1}$. Since $(a,b_{2},b_{3},...b_{j},1_{R_{j+1}},...,1_{R_{n}%
})^{2}\in I$ and $Ann_{R}(a,b_{2},b_{3},...b_{j},1_{R_{j+1}},...,1_{R_{n}%
})=0,$ we have $(a,b_{2},b_{3},...b_{j},1_{R_{j+1}},...,1_{R_{n}})\in I$ and
so $a\in I_{1}$. Therefore, $I_{1}$ is a semi $r$-ideal of $R_{1}.$ Similarly,
$I_{i}$ is a semi $r$-ideals of $R_{i}$ for all $i\in\{1,...,j\}$.

Conversely, if (1) holds, then $I$ is clearly a semi $r$-ideal of $R$. Suppose
that $I_{1},...,I_{j}$ are semi $r$-ideals and $I_{k}=R_{k}$ for all
$k\in\{j+1,...,n\}$. Let $a=(a_{1},a_{2},...,a_{n})\in R$ with $a^{2}\in I$
and $Ann_{R}(a)=0.$ Then for each $i\in\{1,...,j\}$, $a_{i}^{2}\in I$ and
$Ann_{R_{i}}(a_{i})=0$ as $R_{i}$'s are domain. Thus, $a_{i}\in I_{i}$ and so
$a\in I$. Finally, if (3)\ holds, then $I=I_{1}\times I_{2}\times\cdots\times
I_{n}$ is a semi $r$-ideal of $R$ by Corollary \ref{cc}(1).
\end{proof}

Let $R$ and $S$ be two rings, $J$ be an ideal of $S$ and $f:R\rightarrow S$ be
a ring homomorphism. As a subring of $R\times S,$ the amalgamation of $R$ and
$S$ along $J$ with respect to $f$ is defined by $R\Join^{f}J=(a,f(a)+j):a\in
R$, $j\in J\}.$ If $f$ is the identity homomorphism on $R$, then we get the
amalgamated duplication of $R$ along an ideal $J$, $R\Join J=\left\{
(a,a+j):a\in R\text{, }j\in J\right\}  $. For more related definitions and
several properties of this kind of rings, one can see \cite{DAnna1}. If $I$ is
an ideal of $R$ and $K$ is an ideal of $f(R)+J$, then $I\Join^{f}J=\left\{
(i,f(i)+j):i\in I\text{, }j\in J\right\}  $ and $\bar{K}^{f}=\{(a,f(a)+j):a\in
R$, $j\in J$, $f(a)+j\in K\}$ are ideals of $R\Join^{f}J$, \cite{DAnna3}.

\begin{lemma}
\label{am}\cite{Azimi} Let $R$, $S$, $J$ and $f$ be as above. Let
$A=\{(r,f(r)+j)|r\in zd(R)\}$ and $B=\{(r,f(r)+j)|j^{\prime}(f(r)+j)=0$ for
some $j^{\prime}\in J\backslash\{0\}\}$. Then $zd(R\Join^{f}J)\subseteq A\cup
B$.
\end{lemma}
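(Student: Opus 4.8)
The plan is to start from an arbitrary zero divisor of $R\Join^{f}J$, produce a nonzero annihilator of it inside $R\Join^{f}J$, and then split into two cases according to the first coordinate of that annihilator.

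So suppose $(r,f(r)+j)\in zd(R\Join^{f}J)$ with $r\in R$ and $j\in J$. By definition there is a nonzero element of $R\Join^{f}J$, necessarily of the form $(s,f(s)+k)$ with $s\in R$ and $k\in J$, such that $(r,f(r)+j)(s,f(s)+k)=(0,0)$. Reading this equality coordinatewise gives $rs=0$ in $R$ and $(f(r)+j)(f(s)+k)=0$ in $S$. These two relations are all we need.

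First suppose $s\neq 0$. Then $rs=0$ with $s\neq 0$ shows $r\in zd(R)$, hence $(r,f(r)+j)\in A$. Now suppose $s=0$. Since $f(0)=0$, the annihilating element is simply $(0,k)$, and because it is nonzero we must have $k\in J\setminus\{0\}$. The second-coordinate relation then reads $(f(r)+j)\cdot k=0$; taking $j'=k\in J\setminus\{0\}$ we get $j'(f(r)+j)=0$, so $(r,f(r)+j)\in B$. In either case $(r,f(r)+j)\in A\cup B$, which establishes $zd(R\Join^{f}J)\subseteq A\cup B$.

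There is essentially no serious obstacle here; the only subtlety worth flagging is that one must not jump to the conclusion that $r$ is a zero divisor of $R$ — when the annihilator has vanishing first coordinate, $r$ may well be regular in $R$, and the content of the lemma is precisely that in that situation the witness to being a zero divisor lives in the second coordinate through an element of $J$, which is exactly what the set $B$ records.
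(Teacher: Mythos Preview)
Your proof is correct. The paper does not supply its own argument for this lemma; it is quoted from \cite{Azimi} and used as a black box in the proofs of Theorems~\ref{a1} and~\ref{a2}. Your case split on whether the first coordinate $s$ of the annihilating element is zero or not is exactly the natural way to establish the inclusion, and the closing remark about why the set $B$ is needed is apt.
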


Next, we determine conditions under which $I\Join^{f}J$ and $\bar{K}^{f}$ are
semi $r$-ideals of $R\Join^{f}J$.

\begin{theorem}
\label{a1}Let $R$, $S$, $J$ and $f$ be as above. If $I$ is a semi $r$-ideal of
$R$, then $I\Join^{f}J$ is a semi $r$-ideal of $R\Join^{f}J$. The converse is
true if $f(reg(R))\cap Z(J)=\emptyset$
\end{theorem}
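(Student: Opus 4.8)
The plan is to prove both directions directly from the definition of semi $r$-ideal, using Theorem \ref{char} to reduce to the case of nonzero squares, and using Lemma \ref{am} to control zero-divisors in the amalgamation. For the forward direction, suppose $I$ is a semi $r$-ideal of $R$ and take an element $(a,f(a)+j)\in R\Join^f J$ with $(a,f(a)+j)^2\in I\Join^f J$ and $\mathrm{Ann}_{R\Join^f J}(a,f(a)+j)=0$. First I would observe that $(a,f(a)+j)^2=(a^2,f(a^2)+j')$ for a suitable $j'\in J$ lying in the first-coordinate-determined part, so membership in $I\Join^f J$ forces $a^2\in I$. Next I would check that $\mathrm{Ann}_R(a)=0$: if $0\neq b\in\mathrm{Ann}_R(a)$, then I need to produce a nonzero annihilator of $(a,f(a)+j)$ in $R\Join^f J$; the natural candidate is $(b, f(b)+j'')$ for an appropriate choice, or more simply one argues via the component $R\times\{0\}$-type elements inside the amalgamation. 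Once $\mathrm{Ann}_R(a)=0$ and $a^2\in I$ give $a\in I$ since $I$ is a semi $r$-ideal, it follows that $(a,f(a)+j)\in I\Join^f J$, as $(a,f(a)+j)=(a,f(a)+j)$ with $a\in I$ and $j\in J$.

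For the converse, assume $I\Join^f J$ is a semi $r$-ideal of $R\Join^f J$ and that $f(\mathrm{reg}(R))\cap Z(J)=\emptyset$. Take $a\in R$ with $a^2\in I$ and $\mathrm{Ann}_R(a)=0$; I want to conclude $a\in I$. Form the element $(a,f(a))\in R\Join^f J$ (taking $j=0$). Then $(a,f(a))^2=(a^2,f(a^2))\in I\Join^f J$ since $a^2\in I$. The crucial step is showing $\mathrm{Ann}_{R\Join^f J}(a,f(a))=0$. By Lemma \ref{am}, any zero-divisor of $R\Join^f J$ lies in $A\cup B$; here one must show $(a,f(a))$ is not a zero-divisor. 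Since $a\in\mathrm{reg}(R)$ by hypothesis on its annihilator, $(a,f(a))\notin A$. For the $B$-part: a nonzero annihilator of $(a,f(a))$ would give $j'\in J\setminus\{0\}$ with $j' f(a)=0$, i.e. $f(a)\in Z(J)$; but $a\in\mathrm{reg}(R)$ means $f(a)\in f(\mathrm{reg}(R))$, contradicting $f(\mathrm{reg}(R))\cap Z(J)=\emptyset$. More carefully, an arbitrary nonzero $(r,f(r)+j)$ annihilating $(a,f(a))$ forces $ra=0$ hence $r=0$, and then $(f(a))(j)=0$ with $j\neq 0$, again putting $f(a)\in Z(J)$ — the same contradiction. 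Hence $\mathrm{Ann}_{R\Join^f J}(a,f(a))=0$, so $(a,f(a))\in I\Join^f J$, which yields $a\in I$.

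The main obstacle I anticipate is the bookkeeping in identifying which elements of $J$ can occur as the "$j$-part" of a square and, conversely, verifying that the element $(a,f(a))$ genuinely has trivial annihilator — this is where Lemma \ref{am} and the hypothesis $f(\mathrm{reg}(R))\cap Z(J)=\emptyset$ must be combined precisely, since the annihilator in $R\Join^f J$ mixes conditions on $R$ and on $J$. One has to be careful that $ra=0$ with $\mathrm{Ann}_R(a)=0$ forces $r=0$, and only then does the $J$-component condition become $f(a)\cdot j=0$; the hypothesis is tailored exactly to exclude this. A secondary routine point is that $I\Join^f J$ is indeed a proper ideal (it is proper whenever $I$ is proper, since $(1,f(1)+j)\notin I\Join^f J$ for any $j$), which should be noted briefly so that the semi $r$-ideal definition applies. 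I would also remark that, by Theorem \ref{char}(2), it suffices throughout to treat the case $a^2\neq 0$, though in this argument the reduction is not strictly needed since the zero case is immediate.
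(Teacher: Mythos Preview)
Your overall strategy matches the paper's, and your treatment of the converse is correct and essentially identical to the paper's argument (indeed slightly more explicit, since you also spell out the direct annihilator computation rather than relying solely on Lemma~\ref{am}).

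The gap is in the forward direction, at the step where you try to deduce $\mathrm{Ann}_R(a)=0$ from $\mathrm{Ann}_{R\Join^f J}(a,f(a)+j)=0$. Your proposed ``natural candidate'' $(b,f(b)+j'')$ does not work: the product $(a,f(a)+j)(b,f(b)+j'')=(0,(f(a)+j)(f(b)+j''))$ has second coordinate $(f(a)+j)(f(b)+j'')$, which need not vanish for any choice of $j''$. Likewise your alternative suggestion of ``$R\times\{0\}$-type elements'' fails in general, since $(b,0)$ lies in $R\Join^f J$ only when $f(b)\in J$, and nothing in the hypotheses guarantees that some nonzero $b\in\mathrm{Ann}_R(a)$ satisfies this. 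The paper does not attempt any such construction; it simply invokes Lemma~\ref{am} (implicitly using that elements with first coordinate in $zd(R)$ lie in $zd(R\Join^f J)$, i.e.\ the inclusion $A\subseteq zd(R\Join^f J)$ from the cited source, even though the statement of Lemma~\ref{am} recorded in the paper only displays the opposite inclusion). You should replace your ad~hoc annihilator construction with this appeal to the structure of $zd(R\Join^f J)$.

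The remarks about Theorem~\ref{char}(2) and about properness of $I\Join^f J$ are harmless but unnecessary; the paper omits both.
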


\begin{proof}
Suppose $I$ is a semi $r$-ideal of $R$. Let $(a,f(a)+j)\in R\Join^{f}J$ such
that $(a,f(a)+j)^{2}=(a^{2},f(a^{2})+2jf(a)+j^{2})\in I\Join^{f}J$ and
$(a,f(a)+j)\notin zd(R\Join^{f}J)$. Then $a^{2}\in I$ and $a\notin zd(R)$ by
Lemma \ref{am}. Therefore, $a\in I$ and so $(a,f(a)+j)\in I\Join^{f}J$ as
needed. Now, suppose $f(reg(R))\cap Z(J)=\emptyset$ and $I\Join^{f}J$ is a
semi $r$-ideal of $R\Join^{f}J$. Let $a^{2}\in I$ for $a\in R$ and $a\notin
zd(R)$. Then $(a,f(a))\in R\Join^{f}J$ with $(a,f(a))^{2}=(a^{2},f(a^{2}))\in
I\Join^{f}J$. If $(a,f(a))\in zd(R\Join^{f}J)$, then Lemma \ref{am} implies
$f(a)\in Z(J)$ which is a contradiction. Therefore, $(a,f(a))\notin
zd(R\Join^{f}J)$ and so $(a,f(a))\in I\Join^{f}J$ as $I\Join^{f}J$ is a semi
$r$-ideal of $R\Join^{f}J$. Thus, $a\in I$ as required.
\end{proof}

\begin{theorem}
\label{a2}Let $f:R\rightarrow S$ be a ring homomorphism and $J,K$ be ideals of
$S$. If $K$ is a semi $r$-ideal of $f(R)+J$, then $\bar{K}^{f}$ is a semi
$r$-ideal of $R\Join^{f}J$.
\end{theorem}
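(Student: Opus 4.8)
The plan is to show directly that $\bar{K}^f=\{(a,f(a)+j):a\in R,\ j\in J,\ f(a)+j\in K\}$ is a semi $r$-ideal of $R\Join^f J$ by taking an element whose square lies in $\bar{K}^f$ and whose annihilator in $R\Join^f J$ is zero, and deducing that the element itself lies in $\bar{K}^f$. So first I would take $(a,f(a)+j)\in R\Join^f J$ with $(a,f(a)+j)^2=(a^2,(f(a)+j)^2)\in\bar{K}^f$ and $\mathrm{Ann}_{R\Join^f J}(a,f(a)+j)=0$. By the very definition of $\bar{K}^f$, the condition $(a^2,(f(a)+j)^2)\in\bar{K}^f$ says precisely that $(f(a)+j)^2\in K$ (since $a^2\in R$ and $(f(a)+j)^2=f(a^2)+(2jf(a)+j^2)$ is automatically of the required form $f(a^2)+j'$ with $j'=2jf(a)+j^2\in J$). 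Thus writing $w:=f(a)+j\in f(R)+J$, we have $w^2\in K$.

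Next I would argue that $\mathrm{Ann}_{f(R)+J}(w)=0$. This is the key step, and where I expect the main obstacle to lie: one must translate the hypothesis $\mathrm{Ann}_{R\Join^f J}(a,f(a)+j)=0$ into a statement about the annihilator of $w$ inside $f(R)+J$. Suppose $0\neq t\in f(R)+J$ with $tw=0$; write $t=f(b)+j_0$ with $b\in R$, $j_0\in J$. Then $(0,t)\in R\Join^f J$ (it is of the form $(c,f(c)+j')$ with $c=0$, $j'=t\in J$ — note $t=f(0)+t$ and indeed $t\in J$ when $b$ can be chosen $0$; in general I would instead consider the element $(0,t)$ directly, which lies in $R\Join^f J$ exactly because $t\in f(R)+J$ and $t-f(0)=t$; here one must be slightly careful: $(0,t)\in R\Join^f J$ iff $t\in J$, so the right object to use is that $Ann$ is computed coordinatewise and $(0,t)(a,w)=(0,tw)=(0,0)$, forcing $(0,t)=0$ hence $t\in J$ forces $t=0$). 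The cleaner route: since $\mathrm{Ann}_{R\Join^f J}(a,w)=0$ and $(0,t)$ with $t\in J$ kills $(a,w)$, any torsion coming from $J$ is excluded; and if $t\notin J$ we use a suitable element of $R\Join^f J$ projecting to $t$ in the second coordinate with appropriate first coordinate — I would make this precise by noting that if $tw=0$ then $(b,t)(a,w)=(ba,tw)=(ba,0)$, so I also need $ba=0$; using that $\mathrm{Ann}_{R\Join^f J}(a,w)=0$ with the element $(b,t)\in R\Join^f J$ I then get $(b,t)=0$, i.e.\ $b=0$ and $t=0$. Hence $\mathrm{Ann}_{f(R)+J}(w)=0$.

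Then, since $K$ is a semi $r$-ideal of $f(R)+J$ and $w^2\in K$ with $\mathrm{Ann}_{f(R)+J}(w)=0$, we conclude $w\in K$, that is $f(a)+j\in K$. But this is exactly the condition for $(a,f(a)+j)$ to belong to $\bar{K}^f$. Therefore $(a,f(a)+j)\in\bar{K}^f$, and $\bar{K}^f$ is a semi $r$-ideal of $R\Join^f J$, provided we have also checked $\bar{K}^f$ is a proper ideal (it is proper because $K$ is proper in $f(R)+J$: if $\bar{K}^f=R\Join^f J$ then projecting onto the second coordinate would give $K=f(R)+J$).

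The main obstacle, as indicated, is the annihilator translation in the middle paragraph: one has to match the coordinatewise annihilator condition in $R\Join^f J$ with the plain annihilator condition in the overring $f(R)+J$, and keep track of which elements of the form $(b,t)$ actually lie in $R\Join^f J$. I would handle this by always exhibiting an honest element of $R\Join^f J$ (of the shape $(b,f(b)+j_0)$) that witnesses any would-be zero divisor of $w$, rather than working with $f(R)+J$ abstractly; once that bookkeeping is done the rest is immediate from the definition of $\bar{K}^f$ and the hypothesis that $K$ is a semi $r$-ideal.
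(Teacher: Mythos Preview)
Your argument breaks down at the annihilator translation, and the break is not repairable. You correctly observe that if $t=f(b)+j_{0}\in f(R)+J$ satisfies $tw=0$, then $(b,t)\in R\Join^{f}J$ and $(b,t)(a,w)=(ba,0)$; but you then assert ``using that $\mathrm{Ann}_{R\Join^{f}J}(a,w)=0$ \dots\ I get $(b,t)=0$''. That inference requires $ba=0$, which you never establish and which need not hold. In fact the implication $\mathrm{Ann}_{R\Join^{f}J}(a,w)=0\Rightarrow\mathrm{Ann}_{f(R)+J}(w)=0$ is false in general. Take $R=\mathbb{Z}$, $S=\mathbb{Z}/4\mathbb{Z}\times\mathbb{Z}$, $f(n)=(\bar n,0)$, $J=K=0\times\mathbb{Z}$. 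Then $f(R)+J=S$, $K$ is a semi $r$-ideal of $S$, and $R\Join^{f}J\cong\mathbb{Z}\times\mathbb{Z}$ via $(n,(\bar n,m))\mapsto(n,m)$. Under this isomorphism $\bar K^{f}$ corresponds to $4\mathbb{Z}\times\mathbb{Z}$, which is \emph{not} a semi $r$-ideal of $\mathbb{Z}\times\mathbb{Z}$: the element $(2,1)$ (i.e.\ $(2,(\bar 2,1))$) has zero annihilator and square in $\bar K^{f}$, yet does not lie in $\bar K^{f}$. Here $w=(\bar 2,1)$ is a zero divisor in $S$ (killed by $(\bar 2,0)$) even though $(2,w)$ is regular in $R\Join^{f}J$.

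What the paper actually proves is not the displayed sentence in the theorem environment but the enumerated item (1) immediately following it, which carries the extra hypothesis $zd(f(R)+J)=Z(J)$. With that hypothesis the paper's route is short: from $(a,w)\notin zd(R\Join^{f}J)$ and Lemma~\ref{am} one gets $w\notin Z(J)$, and the hypothesis then gives $w\notin zd(f(R)+J)$ directly, after which the semi $r$-ideal property of $K$ applies. Your direct coordinatewise lifting cannot substitute for this hypothesis; the counterexample above shows the unadorned statement is false, so any attempted proof of it must contain an error.
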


\begin{enumerate}
\item If $K$ is a semi $r$-ideal of $f(R)+J$ and $zd(f(R)+J)=Z(J)$, then
$\bar{K}^{f}$ is a semi $r$-ideal of $R\Join^{f}J$.

\item If $\bar{K}^{f}$ is a semi $r$-ideal of $R\Join^{f}J$,
$f(zd(R))\subseteq zd(f(R)+J)$ and $f(zd(R))J=0$, then $K$ is a semi $r$-ideal
of $f(R)+J$.
\end{enumerate}

\begin{proof}
(1) Suppose $K$ is a semi $r$-ideal of $f(R)+J$. Let $(a,f(a)+j)\in R\Join
^{f}J$ such that $(a,f(a)+j)^{2}=(a^{2},(f(a)+j)^{2})\in\bar{K}^{f}$ and
$(a,f(a)+j)\notin zd(R\Join^{f}J)$. Then $(f(a)+j)^{2}\in K$ and by Lemma
\ref{am}, $f(a)+j\notin Z(J)=zd(f(R)+J)$. Therefore, $f(a)+j\in K$ and
$(a,f(a)+j)\in\bar{K}^{f}$ as needed.

(2) Suppose $\bar{K}^{f}$ is a semi $r$-ideal of $R\Join^{f}J$ and
$f(zd(R))J=0$. Let $f(a)+j\in f(R)+J$ such that $(f(a)+j)^{2}\in K$ and
$f(a)+j\notin zd(f(R)+J)$. Then $(a,f(a)+j)\in$ $R\Join^{f}J$ with
$(a,f(a)+j)^{2}\in\bar{K}^{f}$. Suppose $(a,f(a)+j)\in zd(R\Join^{f}J)$. Then
as $Z(J)\subseteq zd(f(R)+J)$ and by Lemma \ref{am}, we conclude that $a\in
zd(R)$. Since $f(a)\in zd(f(R)+J)$, then $f(a)f(b)=0$ for some $0\neq f(b)\in
f(R)$. Thus, $(f(a)+j)f(b)=0$ as $f(zd(R))J=0$ which contradicts that
$f(a)+j\notin zd(f(R)+J)$. Therefore, $(a,f(a)+j)\notin zd(R\Join^{f}J)$ and
so $(a,f(a)+j)\in\bar{K}^{f}$. It follows that $f(a)+j\in K$ and $K$ is a semi
$r$-ideal of $f(R)+J$.
\end{proof}

\section{Semi $r$-submodules of modules over commutative rings}

The aim of this section is to extend semi $r$-ideals of commutative rings to
semi $r$-submodules of modules over commutative rings. Recall that a module
$M$ is said to be faithful if $Ann_{R}(M)=(0:_{R}M)=0_{R}$.

\begin{definition}
Let $M$ be an $R$-module and $N$ a proper submodule of $M.$
\end{definition}

\begin{enumerate}
\item $N$ is called a semiprime submodule if whenever $r^{2}m\in N$, then
$rm\in N.$ \cite{Sar}

\item $N$ is called a $r$-submodule if whenever $rm\in N$ and $Ann_{M}%
(r)=0_{M},$ then $m\in N.$ \cite{Suat}

\item $N$ is called a $sr$-submodule if whenever $rm\in N$ and $Ann_{R}(m)=0,$
then $m\in N.$ \cite{Suat}
\end{enumerate}

\begin{definition}
Let $M$ be an $R$-module and $N$ a proper submodule of $M$. We call $N$ a semi
$r$-submodule if whenever $r\in R$, $m\in M$ with $r^{2}m\in N$,
$Ann_{M}(r)=0_{M}$ and $Ann_{R}(m)=0$, then $rm\in N$.
\end{definition}

The reader clearly observe that any semi $r$-submodule of an $R$-module $R$ is
a semi $r$-ideal of $R$. The zero submodule is always a semi $r$-submodule of
$M.$ Also, see the implications:

\begin{center}
$%
\begin{array}
[c]{ccc}%
r\text{-submodule} &  & \\
& \searrow & \\
sr\text{-submodule} & \rightarrow & \text{semi }r\text{-submodule}\\
& \nearrow & \\
\text{semiprime submodule} &  &
\end{array}
$
\end{center}

\bigskip

However, the next examples show that these arrows are irreversible.

\begin{example}
\label{ex5}
\end{example}

\begin{enumerate}
\item Consider the submodule $N=6%
\mathbb{Z}
\times\left\langle 0\right\rangle $ of the $%
\mathbb{Z}
$-module $M=%
\mathbb{Z}
\times%
\mathbb{Z}
.$ Let $r\in%
\mathbb{Z}
$ and $m=(m_{1},m_{2})\in M$ such that $r^{2}\cdot(m_{1},m_{2})\in N.$ Then
$r^{2}m_{1}\in6%
\mathbb{Z}
$, $r^{2}m_{2}=0$ and $Ann_{%
\mathbb{Z}
}(r)=Ann_{%
\mathbb{Z}
}(m_{1})=Ann_{%
\mathbb{Z}
}(m_{2})=0$ as $%
\mathbb{Z}
$ is a domain. Since $6%
\mathbb{Z}
$ and $\left\langle 0\right\rangle $ are semi $r$-ideals of $%
\mathbb{Z}
$, then $r\cdot(m_{1},m_{2})\in N$ and so $N$ is a semi $r$-submodule of $M$.
On the other hand, we have $2\cdot(3,0)\in N$ with $Ann_{M}(2)=0_{M}$ and
$Ann_{%
\mathbb{Z}
}((3,0))=0$ but $(3,0)\notin N$ and so $N$ is neither $r$-submodule nor
$sr$-submodule of $M$.

\item Consider the submodule $N=\left\langle \bar{4}\right\rangle
\times\left\langle 0\right\rangle $ of the $%
\mathbb{Z}
$-module $M=%
\mathbb{Z}
_{8}\times%
\mathbb{Z}
$. Let $r\in%
\mathbb{Z}
$ and $m=(m_{1},m_{2})\in M$ such that $r^{2}\cdot(m_{1},m_{2})\in N.$ Then it
is clear to observe that $Ann_{%
\mathbb{Z}
}(r)=Ann_{%
\mathbb{Z}
}(m_{1})=Ann_{%
\mathbb{Z}
}(m_{2})=0.$ Since again $N$ is a semi $r$-submodule of $M$ as $\left\langle
\bar{4}\right\rangle $ is a semi $r$-ideal of $%
\mathbb{Z}
_{8}$ and $\left\langle 0\right\rangle $ is a semi $r$-ideals of $%
\mathbb{Z}
$. However, $2^{2}\cdot(\bar{1},0)\in N$ but $2\cdot(\bar{1},0)\notin N$ and
so $N$ is not a semiprime submodule of $M$.
\end{enumerate}

\begin{proposition}
Let $M$ be an $R$-module, $N$ a proper submodule of $M$ and $k$ any positive
integer. Then $N$ is a semi $r$-submodule of $M$ if and only if whenever $r\in
R$, $m\in M$ with $r^{k}m\in N$, $Ann_{M}(r)=0_{M}$ and $Ann_{R}(m)=0$, then
$rm\in N$.
\end{proposition}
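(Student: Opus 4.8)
The statement is a clean "induction on the exponent" result, exactly parallel to the equivalence (1)$\Leftrightarrow$(3) in Theorem \ref{char}, so the plan is to mimic that argument. One direction is free: taking $k=2$ in the "whenever $r^k m\in N$..." condition gives back the definition of semi $r$-submodule, so assume $N$ is a semi $r$-submodule and prove the $k$-version by induction on $k$.

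First I would dispose of the base cases $k=1$ and $k=2$: for $k=1$ the hypothesis $rm\in N$ already gives $rm\in N$, and $k=2$ is the definition. For the inductive step, assume the claim holds for all exponents $t$ with $2\le t<k$, and suppose $r^k m\in N$ with $Ann_M(r)=0_M$ and $Ann_R(m)=0$. Split on the parity of $k$. If $k=2s$ is even, write $r^k m=(r^s)^2 m\in N$; since $Ann_M(r)=0_M$ forces $Ann_M(r^s)=0_M$ (a nonzero $x\in M$ with $r^s x=0$ would, after peeling off powers of $r$, produce a nonzero element killed by $r$), and $Ann_R(m)=0$ is unchanged, the semi $r$-submodule property yields $r^s m\in N$; then apply the induction hypothesis with exponent $s<k$ to get $rm\in N$. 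If $k$ is odd, then $k+1=2s$ with $s\le k$, wait — I want a strictly smaller exponent, so instead note $k$ odd and $k>2$ means $k\ge 3$, and consider $r^{k+1}m = r\cdot r^k m\in N$; this has even exponent $k+1=2s$, so by the even case $r^s m\in N$ with $s=(k+1)/2<k$ (valid since $k\ge 3$), and the induction hypothesis finishes. Alternatively, and more in the spirit of the Theorem \ref{char} proof, from $k$ odd with $k<$ some even bound one writes $(r^s)^2 m\in N$ for suitable $s<k$ directly; either packaging works.

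The only genuinely non-routine point is the annihilator bookkeeping: one must check that $Ann_M(r)=0_M$ implies $Ann_M(r^s)=0_M$ for every positive integer $s$, and likewise (trivially) that $Ann_R(m)=0$ is preserved since $m$ never changes. The first is a short argument: if $r^s x=0$ with $x\neq 0_M$, choose the least $j\ge 1$ with $r^j x=0_M$; then $r^{j-1}x\neq 0_M$ (or $x\neq 0_M$ if $j=1$) is a nonzero element of $M$ annihilated by $r$, contradicting $Ann_M(r)=0_M$. I expect this small lemma to be the main (and only real) obstacle; everything else is the same even/odd induction already executed for ideals. Conversely, as noted, specializing the $k$-condition at $k=2$ recovers the definition, completing the equivalence. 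I would write the forward direction in full and dismiss the converse in one line.
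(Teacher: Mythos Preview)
Your proposal is correct and is exactly the approach the paper has in mind: the paper's own proof of this proposition is the single sentence ``the proof follows by mathematical induction on $k$ in a similar way to that of Theorem \ref{char} (3),'' and your plan unpacks precisely that even/odd induction. Your annihilator lemma ($Ann_M(r)=0_M\Rightarrow Ann_M(r^s)=0_M$) is the module analogue of the ``clearly $Ann_R(a^m)=0$'' step in Theorem \ref{char}, and your handling of the odd case via $r^{k+1}m=r\cdot r^{k}m\in N$ is the same trick the paper uses there (writing $(a^s)^2=a^{k+1}\in I$).
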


\begin{proof}
The proof follows by mathematical induction on $k$ in a similar way to that of
Theorem \ref{char} (3).
\end{proof}

We recall that a module $M$ is torsion (resp. torsion-free) if $T(M)=M$ (resp.
$T(M)=\{0\}$) where $T(M)=\{m\in M:$ there exists $0\neq r\in R$ such that
$rm=0\}$. It is clear that any torsion-free module is faithful.

\begin{proposition}
Semi $r$-submodules and semiprime submodules are coincide in any torsion-free module.
\end{proposition}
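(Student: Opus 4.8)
The plan is to prove both inclusions between the two classes directly from the definitions, exploiting the defining property of a torsion-free module, namely that $rm=0$ with $m\neq 0$ forces $r=0$ (equivalently, $Ann_R(m)=0$ for every nonzero $m$, and $Ann_M(r)=0_M$ for every nonzero $r$). The first direction is immediate: if $N$ is a semiprime submodule and $r\in R$, $m\in M$ satisfy $r^2m\in N$, then $rm\in N$ by the definition of semiprime submodule, so in particular the implication with the extra hypotheses $Ann_M(r)=0_M$ and $Ann_R(m)=0$ holds vacuously-strengthened; hence $N$ is a semi $r$-submodule. This direction needs no torsion-freeness at all and is just the trivial containment already recorded in the implication diagram.

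For the converse, suppose $M$ is torsion-free and $N$ is a semi $r$-submodule of $M$. Let $r\in R$ and $m\in M$ with $r^2m\in N$; I must show $rm\in N$. First I would dispose of the degenerate cases: if $m=0$ then $rm=0\in N$ trivially, and if $r=0$ then again $rm=0\in N$. So assume $r\neq 0$ and $m\neq 0$. Now the point is that torsion-freeness supplies exactly the two annihilator hypotheses in the definition of semi $r$-submodule: since $M$ is torsion-free, $Ann_M(r)=0_M$ for the nonzero element $r$ (if $rm'=0$ with $m'\neq 0$, then $r=0$, contradiction), and likewise $Ann_R(m)=0$ for the nonzero element $m$ (if $r'm=0$ with $r'\neq 0$, then $m=0$, contradiction). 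With $r^2m\in N$, $Ann_M(r)=0_M$, and $Ann_R(m)=0$ all in hand, the semi $r$-submodule property gives $rm\in N$, as desired. Hence $N$ is a semiprime submodule.

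I expect no real obstacle here; the only thing to be careful about is handling the zero cases of $r$ and $m$ before invoking torsion-freeness, since the annihilator conditions are only available for nonzero elements. One might also mention that the second direction uses only that $M$ is torsion-free (faithfulness alone would not suffice, as Example \ref{ex5} shows), which is why the two notions genuinely merge precisely in the torsion-free setting.
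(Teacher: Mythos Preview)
Your proof is correct and follows essentially the same approach as the paper's: both dismiss the trivial direction and then, for the converse, use torsion-freeness to verify the two annihilator hypotheses before invoking the semi $r$-submodule definition. Your version is in fact slightly more careful, as you explicitly separate the degenerate case $m=0$ before claiming $Ann_R(m)=0$, whereas the paper asserts this directly from torsion-freeness without isolating that case.
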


\begin{proof}
Since every semiprime submodule is semi $r$-submodule, we need to show the
converse. Let~$N$ be a semi $r$-submodule of an $R$-module $M$, $r\in R$,
$m\in M$ with $r^{2}m\in N$. Keeping in mind that $M$ is torsion-free, we have
$Ann_{R}(m)=0$. Now, suppose that $m^{\prime}\in Ann_{M}(r).$ Then
$rm^{\prime}=0$ and if $r=0$, then clearly $rm\in N$. If $r\neq0$, then
$m^{\prime}=0$ again as $M$ is torsion-free. Since $N$ is a semi
$r$-submodule, we conclude $rm\in N$, as required.
\end{proof}

\begin{definition}
A proper submodule $N$ of an $R$-module $M$ is said to satisfy the
$D$-annihilator condition if whenever $K$ is a submodule of $M$ and $r\in R$
such that $rK\subseteq N$ and $Ann_{M}(r)=0_{M}$, then either $K\subseteq N$
or $K\cap T(M)=\left\{  0_{M}\right\}  $.
\end{definition}

Obviously, any $r$-submodule satisfies the $D$-annihilator condition. The
converse is not true in general. For example the submodule $N=6%
\mathbb{Z}
\times\left\langle 0\right\rangle $ of the $%
\mathbb{Z}
$-module $M=%
\mathbb{Z}
\times%
\mathbb{Z}
$ clearly satisfies the $D$-annihilator condition. On the other hand, $N$ is
not an $r$-submodule of $M$, (see Example \ref{ex5}(1)). It is clear that any
proper submodule of a torsion-free module satisfies the $D$-annihilator
condition. However, we may find a submodule satisfying the $D$-annihilator
condition in a torsion module. For example, for any positive integer $n$,
every proper submodule of the $%
\mathbb{Z}
$-module $%
\mathbb{Z}
_{n}$ satisfies the $D$-annihilator condition. Indeed, suppose that
$rm\in\left\langle \bar{d}\right\rangle $ for some integer $d$ dividing $n$.
Put $n=cd$ then $cr\bar{m}=0$. Since $Ann_{M}(r)=0_{M}$, we get $c\bar{m}=0$
and so $\bar{m}\in\left\langle \bar{d}\right\rangle $.

\begin{proposition}
\label{eqM}Let $N$ be a proper submodule of an $R$-module $M$ satisfying the
$D$-annihilator condition. Then the following are equivalent.
\end{proposition}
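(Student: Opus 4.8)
The plan is to run the argument exactly along the lines of Theorem~\ref{char}, establishing the listed equivalences cyclically, with the $D$-annihilator hypothesis carrying the one step that is genuinely new for submodules. Presumably the list pairs ``$N$ is a semi $r$-submodule of $M$'' with a $k$-th power reformulation (for $r\in R$, $m\in M$ with $r^{k}m\in N$, $Ann_{M}(r)=0_{M}$ and $Ann_{R}(m)=0$, one gets $rm\in N$) and with a colon-type condition such as $(N:_{M}r^{2})=(N:_{M}r)$ for every $r\in R$ with $Ann_{M}(r)=0_{M}$ (and its $k$-th power analogue); the method below adapts routinely to the exact form of the list.

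First I would dispose of the implications that merely weaken a hypothesis: a statement phrased for all powers $r^{k}$ specializes to $k=2$, and a statement phrased for an arbitrary submodule $K$ with $r^{2}K\subseteq N$ specializes to the cyclic submodule $K=Rm$, which is exactly the defining property of a semi $r$-submodule; these are immediate. Next I would prove the power-reduction from the definition by induction on $k$, copying the proof of Theorem~\ref{char}(1)$\Rightarrow$(3): writing $k=2s$ (or $k+1=2s$), observe that $Ann_{M}(r)=0_{M}$ forces $Ann_{M}(r^{s})=0_{M}$ (if $r^{s}x=0$, peel off one factor of $r$ at a time using $Ann_{M}(r)=0_{M}$), apply the semi $r$-submodule property to the pair $(r^{s},m)$ to obtain $r^{s}m\in N$, and finish by the induction hypothesis. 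Note also that the reverse containment $(N:_{M}r)\subseteq(N:_{M}r^{2})$ (indeed $N\subseteq(N:_{M}r)\subseteq(N:_{M}r^{2})$) is automatic, so the colon equivalences reduce to one inclusion.

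The heart of the proof is the passage from the elementwise property to the colon reformulation, and this is where the $D$-annihilator condition is indispensable and where I expect the only real difficulty. Given $r\in R$ with $Ann_{M}(r)=0_{M}$, set $K=(N:_{M}r^{2})$; then $r^{2}K\subseteq N$ and, as noted, $Ann_{M}(r^{2})=0_{M}$, so the $D$-annihilator condition applied to $r^{2}$ yields either $K\subseteq N$ or $K\cap T(M)=\{0_{M}\}$. In the first case $(N:_{M}r^{2})=N=(N:_{M}r)$. In the second case every nonzero $m\in K$ lies outside $T(M)$, hence $Ann_{R}(m)=0$, so $r^{2}m\in N$ together with $Ann_{M}(r)=0_{M}$ and the semi $r$-submodule property gives $rm\in N$, i.e.\ $m\in(N:_{M}r)$; therefore $K\subseteq(N:_{M}r)$ and equality follows. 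The converse is formal: if $(N:_{M}r^{2})=(N:_{M}r)$ whenever $Ann_{M}(r)=0_{M}$, then $r^{2}m\in N$ with $Ann_{M}(r)=0_{M}$ puts $m\in(N:_{M}r^{2})=(N:_{M}r)$, so $rm\in N$. The $k$-th power version is handled identically, combined with the induction already carried out.

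The main obstacle is not computational but bookkeeping: keeping straight the three simultaneous annihilator/torsion hypotheses $Ann_{M}(r)=0_{M}$, $Ann_{R}(m)=0$, and $K\cap T(M)=\{0_{M}\}$, and verifying at each step that ``$m\notin T(M)$'' is precisely what allows the semi $r$-submodule property to fire inside the dichotomy produced by the $D$-annihilator condition; one must also be careful that the $D$-annihilator condition is stated for $N$ and is therefore legitimately invoked with $r^{2}$ (not $r$) against the submodule $(N:_{M}r^{2})$.
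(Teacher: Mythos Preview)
Your proposal is correct in substance and matches the paper's argument, but you have over-guessed the content of the list. The actual Proposition~\ref{eqM} has only two items: (1) $N$ is a semi $r$-submodule of $M$; (2) for $r\in R$ and a submodule $K$ of $M$ with $r^{2}K\subseteq N$ and $Ann_{M}(r)=0_{M}$, one has $rK\subseteq N$. There is no $k$-th power clause and no explicit colon-ideal clause in this proposition (the $k$-th power equivalence is stated separately just before Proposition~\ref{eqM}, and does not use the $D$-annihilator condition).

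That said, your core argument for the nontrivial direction is exactly the paper's: given $r^{2}K\subseteq N$ with $Ann_{M}(r)=0_{M}$, observe $Ann_{M}(r^{2})=0_{M}$, apply the $D$-annihilator condition with $r^{2}$ to obtain either $K\subseteq N$ or $K\cap T(M)=\{0_{M}\}$, and in the second case use the semi $r$-submodule property elementwise on each nonzero $k\in K$ (since $Ann_{R}(k)=0$) to get $rk\in N$. Your colon formulation $(N:_{M}r^{2})=(N:_{M}r)$ is equivalent to the paper's submodule formulation (take $K=(N:_{M}r^{2})$ in one direction, and note $K\subseteq(N:_{M}r^{2})$ in the other), so the difference is purely cosmetic. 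The reverse implication (2)$\Rightarrow$(1) is, as you say, the specialization $K=Rm$, which the paper dismisses as ``straightforward.''
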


\begin{enumerate}
\item $N$ is a semi $r$-submodule of $M.$

\item For $r\in R$ and a submodule $K$ of $M$ with $r^{2}K\subseteq N$ and
$Ann_{M}(r)=0_{M}$, then $rK\subseteq N$.
\end{enumerate}

\begin{proof}
(1)$\Rightarrow$(2). Suppose that $r^{2}K\subseteq N$ and $Ann_{M}%
(r)=0_{M}=Ann_{M}(r^{2})$. If $K\subseteq N$, then we are done. If
$K\nsubseteq N$, then $Ann_{R}(k)=0_{R}$ for each $k\in K$ since by assumption
$K\cap T(M)=\left\{  0_{M}\right\}  $. Since $N$ is a semi $r$-submodule, we
conclude that $rk\in N$. Therefore, $rk\in N$ for all $k\in K$ and the result follows.

(2)$\Rightarrow$(1). is straightforward.
\end{proof}

Recall that an $R$-module $M$ is called a multiplication module if every
submodule $N$ of $M$ has the form $IM$ for some ideal $I$ of $R$. Moreover, we
have $N=(N:_{R}M)M$. Next, we conclude a useful characterization for semi
$r$-submodules. First, recall the following lemmas.

\begin{lemma}
\label{Smith}\cite{Smith} Let $N$ be a submodule of a finitely generated
faithful multiplication $R$-module $M.$ For an ideal $I$ of $R$,
$(IN:_{R}M)=I(N:_{R}M)$, and in particular, $(IM:_{R}M)=I$.
\end{lemma}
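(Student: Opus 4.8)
The statement to be proved is Lemma \ref{Smith}, a cited result from \cite{Smith} giving the content-type identity $(IN:_{R}M)=I(N:_{R}M)$ for a submodule $N$ of a finitely generated faithful multiplication $R$-module $M$, together with its special case $(IM:_{R}M)=I$. Since this is quoted as a known lemma, the intended ``proof'' is really a pointer to the literature, but let me sketch how one would actually establish it. The plan is to first reduce everything to the special case $(IM:_{R}M)=I$, then bootstrap to general $N$ using the multiplication property $N=(N:_{R}M)M$.

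First I would record the easy inclusion. For any ideal $I$ and any submodule $N$ we trivially have $I(N:_{R}M)\subseteq(IN:_{R}M)$, because if $r\in(N:_{R}M)$ then $rM\subseteq N$, hence $(ar)M=a(rM)\subseteq aN\subseteq IN$ for $a\in I$, so $ar\in(IN:_{R}M)$; taking sums gives the inclusion. The whole content of the lemma is the reverse inclusion, and this is where faithfulness and finite generation are essential (without them the inclusion genuinely fails). The key tool is the standard cancellation/determinant-trick fact for finitely generated faithful multiplication modules: if $J$ is an ideal with $JM\subseteq KM$ for ideals $J,K$ (equivalently, comparisons of submodules translate faithfully to comparisons of the associated ideals after applying $(-:_{R}M)$), then one can cancel $M$. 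Equivalently, $M$ being finitely generated faithful multiplication implies $M$ is ``cancellative'': $IM=I'M$ forces $I=I'$, and more generally $(IM:_{R}M)=I$.

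So the main step is proving $(IM:_{R}M)=I$. I would argue: let $r\in(IM:_{R}M)$, so $rM\subseteq IM$. Write $M=\langle m_1,\dots,m_n\rangle$ (finitely generated). Each $rm_i=\sum_j a_{ij}m_j$ with $a_{ij}\in I$, so $(rE-A)\mathbf{m}=0$ where $A=(a_{ij})$ has entries in $I$ and $E$ is the identity matrix. Multiplying by the adjugate, $\det(rE-A)\cdot m_i=0$ for all $i$, hence $\det(rE-A)\in Ann_R(M)=0$ by faithfulness. Expanding the determinant gives $r^n\in I$ (all lower-order terms carry a factor from $I$), so $r\in\sqrt{I}$ — but that is weaker than $r\in I$. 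To get $r\in I$ exactly one uses the multiplication hypothesis more carefully: $rM\subseteq IM$ together with $M$ multiplication gives, by a theorem of Anderson/Smith, $rM=(rM:_{R}M)M$ and one compares the ideals $\langle r\rangle + I$ against $I$; the precise bookkeeping (which is exactly what \cite{Smith} carries out) yields $\langle r\rangle\subseteq I$. For the general identity, take $r\in(IN:_{R}M)$, so $rM\subseteq IN=I(N:_{R}M)M$ (using $N=(N:_{R}M)M$), whence $r\in(I(N:_{R}M)M:_{R}M)=I(N:_{R}M)$ by the special case applied to the ideal $I(N:_{R}M)$. Combined with the easy inclusion above, this gives equality.

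I expect the main obstacle to be upgrading the determinant-trick conclusion $r^n\in I$ to the sharp statement $r\in I$; this is precisely the point where the full strength of ``multiplication module'' (not just ``faithful and finitely generated'') is used, and it is the technical heart of \cite{Smith}. Since the lemma is cited rather than reproved here, in the paper itself I would simply write ``See \cite{Smith}.'' and move on, reserving the sketch above only as motivation for why the hypotheses are what they are.
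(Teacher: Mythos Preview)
The paper gives no proof of this lemma at all; it is simply quoted from \cite{Smith} as background and used as a black box in the arguments that follow. Your proposal correctly identifies this and explicitly says that in the paper one would ``simply write `See \cite{Smith}.' and move on,'' which is exactly what happens. Your accompanying sketch (easy inclusion, determinant trick for the special case, then bootstrapping via $N=(N:_{R}M)M$) is a reasonable outline of the standard argument, and you are honest that the passage from $r^{n}\in I$ to $r\in I$ is the nontrivial point requiring the multiplication hypothesis; since the paper does not attempt any of this, there is nothing further to compare.
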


\begin{lemma}
\cite{Majed}\label{Majed} Let $N$ is a submodule of faithful multiplication
$R$-module $M$. If $I$ is a finitely generated faithful multiplication ideal
of $R$, then

\begin{enumerate}
\item $N=(IN:_{M}I)$.

\item If $N\subseteq IM$, then $(JN:_{M}I)=J(N:_{M}I)$ for any ideal $J$ of
$R$.
\end{enumerate}
\end{lemma}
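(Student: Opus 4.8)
The plan is to translate everything into residual identities among ideals of $R$ and then feed them into Lemma \ref{Smith}. Write $N^{\star}=(N:_{R}M)$, so that $N=N^{\star}M$ since $M$ is multiplication, and record the adjunction $\big((N:_{M}I):_{R}M\big)=(N^{\star}:_{R}I)$, which is immediate from the definitions since each side consists precisely of those $r\in R$ with $rIM\subseteq N$. The engine behind both parts is the fact that a finitely generated faithful multiplication ideal $I$ is a strong cancellation ideal, meaning $(AI:_{R}I)=A$ for every ideal $A$ of $R$. I would obtain this by applying Lemma \ref{Smith} with the underlying module taken to be $I$ itself (which is finitely generated, faithful and multiplication as an $R$-module) and the submodule taken to be $I$: then $(AI:_{R}I)=A(I:_{R}I)=A$; in particular $BI=CI$ forces $B=C$.

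For part (1) the computation is $\big((IN:_{M}I):_{R}M\big)=\big((IN:_{R}M):_{R}I\big)=(IN^{\star}:_{R}I)=N^{\star}$, where the first equality is the adjunction, the second is Lemma \ref{Smith} for $M$, and the third is the cancellation identity. Since $(IN:_{M}I)$ is a submodule of the multiplication module $M$, it equals $\big((IN:_{M}I):_{R}M\big)M=N^{\star}M=N$, which is the assertion; the reverse inclusion $N\subseteq(IN:_{M}I)$ is trivial and just a sanity check.

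For part (2) the hypothesis $N\subseteq IM$ is used exactly once: by the last assertion of Lemma \ref{Smith} it gives $N^{\star}\subseteq(IM:_{R}M)=I$, and since $I$ is a multiplication ideal this lets me write $N^{\star}=A_{0}I$ with $A_{0}=(N^{\star}:_{R}I)$. Then $(N:_{M}I)=\big((N:_{M}I):_{R}M\big)M=(N^{\star}:_{R}I)M=A_{0}M$, so $J(N:_{M}I)=JA_{0}M$. On the other hand $\big((JN:_{M}I):_{R}M\big)=(JN^{\star}:_{R}I)=(JA_{0}I:_{R}I)=JA_{0}$ by Lemma \ref{Smith} and cancellation once more, whence $(JN:_{M}I)=JA_{0}M=J(N:_{M}I)$. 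The inclusion $J(N:_{M}I)\subseteq(JN:_{M}I)$ holds with no hypotheses and is the easy half.

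The only genuinely delicate point is bookkeeping about hypotheses: Lemma \ref{Smith} is stated for finitely generated faithful multiplication modules, and I invoke it both for $M$ and for $I$ viewed as an $R$-module, so at each use one must check those conditions — for $I$ they are part of the statement, and for $M$ one should take it to be finitely generated faithful multiplication, in line with the surrounding results. No single computation is hard; all the content sits in the cancellation identity and the adjunction, after which the rest is formal.
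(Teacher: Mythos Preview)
The paper does not supply its own proof of this lemma: it is quoted verbatim from \cite{Majed} and used as a black box, so there is no in-paper argument to compare against. Your argument is sound and is in fact the standard route --- reduce everything to ideals via $N\mapsto(N:_{R}M)$, use the residual formula of Lemma~\ref{Smith} for $M$, and use it again with the module $I$ to get the cancellation law $(AI:_{R}I)=A$; part (2) then follows by writing $N^{\star}=A_{0}I$ inside the multiplication ideal $I$.

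One genuine bookkeeping point deserves emphasis rather than a parenthetical aside: the lemma as stated only assumes $M$ is faithful and multiplication, whereas your invocation of Lemma~\ref{Smith} for $M$ needs $M$ to be finitely generated as well. You flag this, but note that it is not merely cosmetic --- without finite generation the identity $(IN:_{R}M)=I(N:_{R}M)$ can fail, and your computation of $((IN:_{M}I):_{R}M)$ collapses. In the source \cite{Majed} the argument is organized so that the finite-generation burden is carried by $I$ alone (via the cancellation property of finitely generated faithful multiplication ideals applied directly to submodules, not just to ideals), which is why $M$ is not assumed finitely generated there. Within the present paper this distinction is harmless, since every application of the lemma (Theorem~\ref{IN}) already has $M$ finitely generated; but if you want to match the stated hypotheses exactly you would need to sharpen the cancellation step to work at the level of submodules of $M$ rather than passing through $(\,\cdot\,:_{R}M)$.
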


\begin{theorem}
\label{IM}Let $M$ be a finitely generated faithful multiplication $R$-module.
Then a submodule $N=IM$ satisfying the $D$-annihilator condition is a semi
$r$-submodule of $M$ if and only if $I$ is a semi $r$-ideal of $R$.
\end{theorem}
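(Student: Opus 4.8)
The plan is to prove both directions by transferring the defining condition back and forth along the correspondence $N = IM \leftrightarrow I = (N :_R M)$, using the hypotheses that $M$ is finitely generated faithful multiplication. Recall that in such a module we have $N = (N :_R M)M$ for every submodule, so $I = (IM :_R M) = (N :_R M)$ by Lemma \ref{Smith}. I will use Proposition \ref{eqM} crucially: since $N$ satisfies the $D$-annihilator condition, being a semi $r$-submodule is equivalent to the statement that for $r \in R$ and every submodule $K$ of $M$ with $r^2 K \subseteq N$ and $Ann_M(r) = 0_M$, we get $rK \subseteq N$.

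For the ``if'' direction, assume $I$ is a semi $r$-ideal of $R$. Take $r \in R$ and a submodule $K$ with $r^2 K \subseteq N = IM$ and $Ann_M(r) = 0_M$; I aim to show $rK \subseteq N$, which by Proposition \ref{eqM} suffices. Write $K = LM$ for an ideal $L$ of $R$. Then $r^2 LM \subseteq IM$, and applying $(- :_R M)$ together with Lemma \ref{Smith} gives $r^2 L \subseteq (IM :_R M) = I$, i.e. $(rL)^2 \subseteq r^2 L \subseteq I$ (note $(rL)^2 = r^2 L^2 \subseteq r^2 L$). Now I need $rL \cap zd(R) = \{0\}$ so that Corollary \ref{coro} applies and yields $rL \subseteq I$, hence $rK = rLM \subseteq IM = N$. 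The condition $Ann_M(r) = 0_M$ should translate (via faithfulness/multiplication) into $Ann_R(r) = 0$, i.e. $r \in reg(R)$; combined with the $D$-annihilator condition forcing either $K \subseteq N$ (done) or $K \cap T(M) = \{0_M\}$, the latter should give that elements of $rL$ annihilate nothing nonzero in $M$, hence (by faithfulness) are regular in $R$. I expect the cleanest route is: if $K \subseteq N$ we are immediately done; otherwise $K \cap T(M) = \{0_M\}$, and then for $0 \neq a \in L$ one checks $Ann_R(ra) = 0$ directly, so $ra \in I$ by Theorem \ref{char}(2)–(3), giving $rL \subseteq I$.

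For the ``only if'' direction, assume $N = IM$ is a semi $r$-submodule. Let $a \in R$ with $a^2 \in I$ and $Ann_R(a) = 0$; I must show $a \in I$. From $a^2 \in I = (N :_R M)$ we get $a^2 M \subseteq N$, i.e. $a^2 K \subseteq N$ with $K = M$. To invoke the semi $r$-submodule property (via Proposition \ref{eqM} with $K = M$) I need $Ann_M(a) = 0_M$: this follows because $Ann_R(a) = 0$ and, for a faithful multiplication module, $Ann_M(a) = Ann_R(a)M$ (an element $m = \sum r_i m_i$-type expansion using $M = $ multiplication, or directly: $aM' = 0$ for $M' = Ann_M(a)$ forces the ideal $(M' :_R M)$ into $Ann_R(a) = 0$, so $M' = 0$). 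Then Proposition \ref{eqM} gives $aM = aK \subseteq N = IM$, and applying $(- :_R M)$ with Lemma \ref{Smith} yields $a \in (aM :_R M) \subseteq (IM :_R M) = I$, as desired.

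The main obstacle I anticipate is the careful passage between module-annihilators and ring-annihilators — specifically verifying $Ann_M(r) = 0_M \Leftrightarrow Ann_R(r) = 0$ (equivalently $Ann_M(r) = Ann_R(r)M$) for a finitely generated faithful multiplication module, and making sure the $D$-annihilator condition is deployed at exactly the right moment so that the ``$K \cap T(M) = \{0_M\}$'' alternative converts into a regularity statement in $R$ that feeds Corollary \ref{coro}. Everything else is a routine application of Lemma \ref{Smith} (pushing ideals and submodules through $(- :_R M)$ and $IM$) and Theorem \ref{char}/Corollary \ref{coro} on the ring side.
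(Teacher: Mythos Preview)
Your proposal is correct and follows essentially the same approach as the paper's proof: both directions go through Proposition~\ref{eqM}, Lemma~\ref{Smith}, and Corollary~\ref{coro}, with the key technical step being the translation between $Ann_R(-)=0$ and $Ann_M(-)=0_M$ via faithfulness and the multiplication hypothesis. Your ``cleanest route'' for the backward direction---splitting on the $D$-annihilator alternative and, in the case $K\cap T(M)=\{0_M\}$, verifying $Ann_R(ra)=0$ for $0\neq a\in L$ so that Corollary~\ref{coro} applies to $rL$---is exactly the paper's argument (with your $L$ playing the role of the paper's $J$); one small caution is that your intermediate remark that ``elements of $rL$ annihilate nothing nonzero in $M$'' (i.e.\ $Ann_M(ra)=0_M$) is not the statement you actually need or should try to prove directly---what you need, and what your cleanest route correctly targets, is $Ann_R(ra)=0$, obtained by observing $aM\subseteq K\cap T(M)=\{0_M\}$ whenever $ra$ has a nonzero annihilator in $R$.
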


\begin{proof}
Suppose $N=IM$ is a semi $r$-submodule of $M$ and let $r\in R$ such that
$r^{2}\in I$ with $Ann_{R}(r)=0$. We claim that $Ann_{M}(r)=0_{M}$. Indeed, if
there is $0_{M}\neq m\in M$ such that $rm=0_{M}$, then $\left\langle
r\right\rangle (\left\langle m\right\rangle :_{R}M)=(\left\langle
rm\right\rangle :_{R}M)=(0_{M}:_{R}M)=0$ by Lemma \ref{Smith}. Thus,
$(\left\langle m\right\rangle :_{R}M)=0$ as $Ann_{R}(r)=0$ and then
$\left\langle m\right\rangle =(\left\langle m\right\rangle :_{R}M)M=0_{M}$, a
contradiction. Since $N$ satisfies the $D$-annihilator condition and
$r^{2}M\subseteq IM$, then $rM\subseteq IM$ by Proposition \ref{eqM}. Thus,
$r\in(rM:_{R}M)\subseteq(IM:_{R}M)=I$, as needed.

Conversely, suppose that $I$ is a semi $r$-ideal of $R$. Let $r\in R$ and
$K=JM$ be a submodule of $M$ such that $r^{2}JM=r^{2}K\subseteq IM$ and
$Ann_{M}(r)=0_{M}$. Take $A=rJ$ and note that $A^{2}\subseteq r^{2}%
JM:M\subseteq(IM:_{R}M)=I$ by Lemma \ref{Smith}. Now, we claim that $A\cap
zd(R)=\left\{  0\right\}  $. Suppose on contrary that there exists $0\neq
a=rj\in A$ such that $Ann_{R}(a)\neq0$. Choose $0\neq b\in R$ with $ab=rjb=0$.
Then $rjbM=0_{M}$ and so $jbM=0_{M}$ as $Ann_{M}(r)=0_{M}$. Since $b\neq0$,
$jM\subseteq K$ and $N$ satisfies the $D$-annihilator condition, then $jM=0$
and we conclude $j=0$ as $M$\textbf{ }is faithful, which is a
contradiction.\textbf{ }Therefore, $A\cap zd(R)=\left\{  0\right\}  $ and
$A\subseteq I$ by Corollary \ref{coro}. Thus, $rK=rJM=AM\subseteq IM=N$ as needed.
\end{proof}

In view of Theorem \ref{IM} we give the following characterization.

\begin{corollary}
\label{(N:M)}Let $R$ be a ring and $M$ be a finitely generated faithful
multiplication $R$-module. For a submodule $N$ of $M$ satisfying the
$D$-annihilator condition, the following statements are equivalent.

\begin{enumerate}
\item $N$ is a semi $r$-submodule of $M$.

\item $(N:_{R}M)$ is semi $r$-ideal of $R$.

\item $N=IM$ for some semi $r$-ideal $I$ of $R$.
\end{enumerate}
\end{corollary}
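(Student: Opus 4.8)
The plan is to prove Corollary \ref{(N:M)} by combining Theorem \ref{IM} with the structural facts about finitely generated faithful multiplication modules, namely that every submodule $N$ has the form $N = (N:_R M)M$ (so $I = (N:_R M)$ is a canonical choice of ideal realizing $N$). The logical skeleton I would use is the cycle $(1)\Rightarrow(2)\Rightarrow(3)\Rightarrow(1)$, though in fact each implication is short once Theorem \ref{IM} is in hand.

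First, for $(1)\Rightarrow(2)$: assume $N$ is a semi $r$-submodule satisfying the $D$-annihilator condition. Since $M$ is multiplication, $N = (N:_R M)M$, so writing $I = (N:_R M)$ we have $N = IM$ with $N$ satisfying the $D$-annihilator condition. Theorem \ref{IM} then applies directly and gives that $I = (N:_R M)$ is a semi $r$-ideal of $R$. For $(2)\Rightarrow(3)$: this is immediate, since again $N = (N:_R M)M$ exhibits $N$ in the form $IM$ with $I = (N:_R M)$ a semi $r$-ideal by hypothesis. For $(3)\Rightarrow(1)$: suppose $N = IM$ for some semi $r$-ideal $I$; since $N$ satisfies the $D$-annihilator condition by the standing hypothesis of the corollary, Theorem \ref{IM} (the ``if'' direction) yields that $N$ is a semi $r$-submodule of $M$. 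This closes the cycle.

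The only subtlety worth flagging is that in $(3)\Rightarrow(1)$ the ideal $I$ appearing in the hypothesis need not literally equal $(N:_R M)$; however, Theorem \ref{IM} is stated for \emph{any} ideal $I$ with $N = IM$, so no reconciliation is needed — one simply applies the theorem to whatever $I$ is given. Similarly, in passing from $(1)$ or $(2)$ to the form $N = IM$, one should note explicitly that $N = (N:_R M)M$ holds in a multiplication module, which is the recalled fact preceding Lemma \ref{Smith}. I do not anticipate a genuine obstacle here: the corollary is essentially a repackaging of Theorem \ref{IM} using the canonical representation $N=(N:_RM)M$, and the main work has already been done in the proof of that theorem (in particular the use of Lemma \ref{Smith} and Corollary \ref{coro}). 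The proof should therefore be just a few lines assembling these observations.
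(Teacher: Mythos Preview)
Your proposal is correct and matches the paper's approach: the paper gives no explicit proof but simply states the corollary ``in view of Theorem \ref{IM}'', and your argument is precisely the natural unpacking of that remark using the canonical representation $N=(N:_{R}M)M$ available in any multiplication module. The cycle $(1)\Rightarrow(2)\Rightarrow(3)\Rightarrow(1)$ you outline, together with your observation that Theorem \ref{IM} applies to any $I$ with $N=IM$ (and in fact Lemma \ref{Smith} forces $I=(N:_{R}M)$ anyway), is exactly what is intended.
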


Let $N$ be a submodule of an $R$-module $M$ and $I$ be an ideal of $R$. The
residual of $N$ by $I$ is the set $(N:_{M}I)=\{m\in M:Im\subseteq N\}$. It is
clear that $(N:_{M}I)$ is a submodule of $M$ containing $N$. More generally,
for any subset $S\subseteq R$, $(N:_{M}S)$ is a submodule of $M$ containing
$N$. We recall that $M$-$rad(N)$ denotes the intersection of all prime
submodules of $M$ containing $N$. Moreover, if $M$ is finitely generated
faithful multiplication, then $M$-$rad(N)=\sqrt{(N:_{R}M)}M$, \cite{Smith}.

\begin{proposition}
Let $M$ be a finitely generated multiplication $R$-module and $N$ be a semi
$r$-submodule of $M$ satisfying the $D$-annihilator condition$.$
\end{proposition}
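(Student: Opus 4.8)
The plan is to push the ring-level inclusion of Theorem \ref{char}(4) up to the module $M$, using the dictionary between $N$ and $(N:_{R}M)$ supplied by Corollary \ref{(N:M)}. Presumably the proposition asserts that $M$-$rad(N)$ is contained in something built from the torsion submodule $T(M)$ and $N$ (the module analogue of $\sqrt{I}\subseteq zd(R)\cup I$). I would begin by invoking Corollary \ref{(N:M)}: since $M$ is finitely generated faithful multiplication (faithfulness is what the recalled radical formula and Corollary \ref{(N:M)} need) and $N$ is a semi $r$-submodule satisfying the $D$-annihilator condition, $I:=(N:_{R}M)$ is a semi $r$-ideal of $R$ and $N=IM$. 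Combining this with $M$-$rad(N)=\sqrt{(N:_{R}M)}M=\sqrt{I}M$ and with Theorem \ref{char}(4), which gives $\sqrt{I}\subseteq zd(R)\cup I$, the problem reduces to analysing the submodule $\sqrt{I}M$.

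Next I would take $m\in M$-$rad(N)=\sqrt{I}M$ and decompose it. Writing $M=Rx_{1}+\cdots+Rx_{n}$ (possible as $M$ is finitely generated) gives $\sqrt{I}M=\sqrt{I}x_{1}+\cdots+\sqrt{I}x_{n}$, so $m=\sum_{j=1}^{n}c_{j}x_{j}$ with each $c_{j}\in\sqrt{I}$. By Theorem \ref{char}(4) every $c_{j}$ lies in $zd(R)$ or in $I$; splitting the sum along $A=\{j:c_{j}\in I\}$ and its complement, the partial sum over $A$ lies in $IM=N$, while for $j\notin A$ we have $c_{j}\in zd(R)$, hence $Ann_{R}(c_{j}x_{j})\supseteq Ann_{R}(c_{j})\neq 0$, so $c_{j}x_{j}\in T(M)$, and the partial sum over the complement lies in $T(M)$ since $T(M)$ is a submodule. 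This already yields $m\in N+T(M)$. To obtain the sharper statement $m\in T(M)\cup N$ (if that is what is claimed), I would instead argue that whenever $Ann_{R}(m)=0$ the ideal $(\langle m\rangle:_{R}M)$ meets $zd(R)$ only in $0$; then from $\langle m\rangle\subseteq\sqrt{I}M$ and Lemma \ref{Smith} one gets $(\langle m\rangle:_{R}M)\subseteq\sqrt{I}$, whence Corollary \ref{coro} forces $(\langle m\rangle:_{R}M)\subseteq I$ and so $m\in\langle m\rangle=(\langle m\rangle:_{R}M)M\subseteq IM=N$.

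The decomposition step is routine; the real obstacle is ruling out zero divisors inside $(\langle m\rangle:_{R}M)$ for a regular element $m$ (one with $Ann_{R}(m)=0$) of $M$-$rad(N)$, and this is exactly where the $D$-annihilator condition on $N$ and the faithful multiplication hypothesis on $M$ must be used. Indeed, if $0\neq d\in(\langle m\rangle:_{R}M)$ with $dc=0$ for some $0\neq c\in R$, then $dM$ is a nonzero submodule of $\langle m\rangle$ (nonzero because $M$ is faithful) annihilated by $c$, so $\langle m\rangle\cap T(M)\neq\{0_{M}\}$; feeding a suitable power of $d$ (chosen so that it both lands in $I$ and annihilates nothing nonzero in $M$) into the $D$-annihilator condition for $N$ should then force $\langle m\rangle\subseteq N$, contradicting $m\notin N$. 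Choosing the correct element to which the $D$-annihilator condition applies — one needs an $r$ with $Ann_{M}(r)=0_{M}$ — is the delicate point; the ring-theoretic reduction itself is immediate.
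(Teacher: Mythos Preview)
Your proposal is aimed at the wrong target. The proposition, as stated in the paper, carries two enumerated conclusions that were cut from the excerpt you saw:
\begin{enumerate}
\item For any ideal $I$ of $R$ with $(N:_{M}I)\neq M$, the residual $(N:_{M}I)$ is a semi $r$-submodule of $M$.
\item If $M$ is faithful, then $(M\text{-}rad(N):_{R}M)\subseteq zd(R)\cup\sqrt{(N:_{R}M)}$.
\end{enumerate}
You guessed a module-level containment of the form $M\text{-}rad(N)\subseteq T(M)\cup N$ and built an elaborate machine to approach it; that statement is neither asserted nor needed.

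For part (2), your very first move is already the whole proof: Corollary \ref{(N:M)} gives that $(N:_{R}M)$ is a semi $r$-ideal, Theorem \ref{char}(4) gives $\sqrt{(N:_{R}M)}\subseteq zd(R)\cup(N:_{R}M)$, and the formula $M\text{-}rad(N)=\sqrt{(N:_{R}M)}M$ together with Lemma \ref{Smith} identifies $(M\text{-}rad(N):_{R}M)$ with $\sqrt{(N:_{R}M)}$. Everything you do after that point---the generator decomposition, the analysis of $(\langle m\rangle:_{R}M)\cap zd(R)$, and the attempt to feed powers of a zero divisor into the $D$-annihilator condition---is superfluous, and the last step is in fact broken: the $D$-annihilator condition demands an $r$ with $Ann_{M}(r)=0_{M}$, and a power of a zero divisor will not supply one.

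Part (1) you miss entirely. The paper's argument first checks that $(N:_{M}I)$ inherits the $D$-annihilator condition from $N$ (if $rK\subseteq(N:_{M}I)$ with $Ann_{M}(r)=0_{M}$ and $K\nsubseteq(N:_{M}I)$, then $rIK\subseteq N$ forces $IK\cap T(M)=\{0_{M}\}$, hence $K\cap T(M)=\{0_{M}\}$), and then applies Proposition \ref{eqM}: from $r^{2}K\subseteq(N:_{M}I)$ one gets $r^{2}IK\subseteq N$, hence $rIK\subseteq N$, hence $rK\subseteq(N:_{M}I)$.
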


\begin{enumerate}
\item For any ideal $I$ of $R$ with $(N:_{M}I)\neq M$, $(N:_{M}I)$ is a semi
$r$-submodule of $M.$

\item If $M$ is faithful, then $(M$-$rad(N):_{R}M)\subseteq zd(R)\cup
\sqrt{(N:_{R}M)}$.
\end{enumerate}

\begin{proof}
(1) First, we show that $(N:_{M}I)$ satisfies the $D$-annihilator condition$.$
Let $K$ be a submodule of $M$ and $r\in R$ such that $rK\subseteq(N:_{M}I)$,
$K\nsubseteq(N:_{M}I)$ and $Ann_{M}(r)=0_{M}$. Then $rIK\subseteq N$ and so
$IK\cap T(M)=\left\{  0_{M}\right\}  $. It follows clearly that $K\cap
T(M)=\left\{  0_{M}\right\}  $ as needed. Suppose $N$ is a semi $r$-submodule
of $M$. Let $K$ be a submodule of $M$ such that $r^{2}K\subseteq(N:_{M}I)$ and
$Ann_{M}(r)=0_{M}$. Then $r^{2}IK\subseteq N$ which implies that $rIK\subseteq
N$ by Proposition \ref{eqM} and thus, $rK\subseteq(N:_{M}I)$. Therefore,
$(N:_{M}I)$ is a semi $r$-submodule of $M$ again by Proposition \ref{eqM}.

(2) Since $N$ be a semi $r$-submodule,$\ (N:_{R}M)$ is a semi $r$-ideal of $R$
by Corollary \ref{(N:M)}. Then the claim follows as $M$-$rad(N)=\sqrt
{(N:_{R}M)}M$ and by using Theorem \ref{char}(4).
\end{proof}

Next, we discuss when $IN$ is a semi $r$-submodule of a finitely generated
multiplication module $M$ where $I$ is an ideal of $R$ and $N$ is a submodule
of $M$. Recall that a submodule $N$ of an $R$-module $M$ is said to be pure if
$JN=JM\cap N$ for every ideal $J$ of $R$.

\begin{theorem}
\label{IN}Let $I$ be an ideal of a ring $R$, $M$ be a finitely generated
faithful multiplication $R$-module and $N$ be a submodule of $M$ such that
$IN$ satisfies the $D$-annihilator condition.
\end{theorem}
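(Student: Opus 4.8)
The plan is to reduce every assertion about the submodule $IN$ to a statement about an ideal of $R$, exactly as was done for $IM$ in Theorem \ref{IM} and Corollary \ref{(N:M)}. The key observation is that, since $M$ is a multiplication module, $N=(N:_{R}M)M$ and hence $IN=I(N:_{R}M)M$; applying Lemma \ref{Smith} to the submodule $N$ gives $(IN:_{R}M)=I(N:_{R}M)$, so that $IN=(IN:_{R}M)M$ is itself presented in the form $I'M$ with $I'=I(N:_{R}M)$. Because $IN$ is assumed to satisfy the $D$-annihilator condition, Corollary \ref{(N:M)} applies verbatim to $IN$ and immediately yields that $IN$ is a semi $r$-submodule of $M$ if and only if $I(N:_{R}M)$ is a semi $r$-ideal of $R$. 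This settles the "basic" equivalence with essentially no extra work.

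For the refinements I would bring in additional hypotheses on $N$, most naturally purity. If $N$ is pure, then $IN=IM\cap N$, and since residuals distribute over intersections, $(IN:_{R}M)=(IM:_{R}M)\cap(N:_{R}M)=I\cap(N:_{R}M)$ using the second part of Lemma \ref{Smith}; in particular $I(N:_{R}M)=I\cap(N:_{R}M)$ in this case. Now Proposition \ref{inters} shows that whenever $I$ and $(N:_{R}M)$ are both semi $r$-ideals of $R$, so is $I\cap(N:_{R}M)$, whence $IN$ is a semi $r$-submodule of $M$ by the equivalence above. Conversely, $IN$ being a semi $r$-submodule forces $I\cap(N:_{R}M)$ to be a semi $r$-ideal, from which one can extract partial information about $I$ and $(N:_{R}M)$ individually, for instance via Corollary \ref{coro} under a hypothesis controlling which zero divisors meet these ideals.

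The place that needs care — and which I expect to be the main obstacle — is transferring the $D$-annihilator condition cleanly to the auxiliary submodules that appear ($IN$ itself in the basic part, $IM\cap N$ in the pure part), since the equivalence in Corollary \ref{(N:M)} is only available under that standing assumption. In the pure case one verifies it directly: if $rK\subseteq IN$ with $Ann_{M}(r)=0_{M}$ and $K\nsubseteq IN$, then $rK\subseteq IM$, so $r^{2}K$-type containments can be pushed back to $K$ just as in the $D$-annihilator checks carried out in the proof of the preceding proposition and in Theorem \ref{IM}. Apart from this bookkeeping, all the heavy lifting is subcontracted to Lemma \ref{Smith}, Corollary \ref{(N:M)}, Proposition \ref{inters} and Corollary \ref{coro}, so the argument should be short.
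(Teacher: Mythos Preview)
Your reduction ``$IN$ is a semi $r$-submodule $\Longleftrightarrow$ $(IN:_{R}M)=I(N:_{R}M)$ is a semi $r$-ideal'' via Corollary \ref{(N:M)} is fine as a starting point, and for part (1) your route through purity and Proposition \ref{inters} is a legitimate alternative to the paper's direct element-chase (the paper instead verifies $rK\subseteq IN$ by hand, writing $K=JM$, setting $A=rJ$, showing $A\cap zd(R)=\{0\}$, and applying Corollary \ref{coro} separately to $I$ and to $(N:_{R}M)$ before invoking purity). One caution: to feed $(N:_{R}M)$ into Proposition \ref{inters} you need it to be a semi $r$-ideal, and Corollary \ref{(N:M)} only delivers that if $N$ itself satisfies the $D$-annihilator condition, which is not among the hypotheses (only $IN$ is assumed to satisfy it). The paper's proof has the same implicit step when it invokes Theorem \ref{IM} for $N$, so this is a shared soft spot rather than a defect peculiar to your argument.

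The real gap is part (2). Your plan there is to sit inside the intersection $I\cap(N:_{R}M)$ and ``extract partial information about $I$ and $(N:_{R}M)$ individually''; but an intersection being a semi $r$-ideal does not force either intersectand to be one, so Corollary \ref{coro} will not rescue this without strong extra control on zero divisors that is nowhere in the hypotheses. More fundamentally, part (2) does \emph{not} assume $N$ is pure, so the identification $(IN:_{R}M)=I\cap(N:_{R}M)$ is unavailable; you only have the product $I(N:_{R}M)$. The paper's argument hinges on the additional hypothesis that $I$ is a finitely generated faithful multiplication ideal, which unlocks the cancellation identity $(I(N:_{R}M):_{R}I)=(N:_{R}M)$ from Lemma \ref{Majed}. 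With that in hand, one shows directly that $(N:_{R}M)$ is a semi $r$-ideal: if $r^{2}\in(N:_{R}M)$ with $r\notin zd(R)$, then $(rI)^{2}\subseteq I(N:_{R}M)=(IN:_{R}M)$, and since $(IN:_{R}M)$ is a semi $r$-ideal and $rI\cap zd(R)=\{0\}$, Corollary \ref{coro} gives $rI\subseteq I(N:_{R}M)$, whence $r\in(I(N:_{R}M):_{R}I)=(N:_{R}M)$. Your outline misses both this hypothesis and this lemma, and without them the converse direction cannot be completed.
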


\begin{enumerate}
\item If $I$ is a semi $r$-ideal of $R$ and $N$ is a pure semi $r$-submodule
of $M$, then $IN$ is a semi $r$-submodule of $M$.

\item Let $I$ be a finitely generated faithful multiplication ideal of $R$. If
$IN$ is semi $r$-submodule of $M$, then either $I$ is a semi $r$-ideal of $R$
or $N$ is a semi $r$-submodule of $M$.
\end{enumerate}

\begin{proof}
(1) Suppose that $r^{2}K\subseteq IN$ and $Ann_{M}(r)=0_{M}$ for some $r\in R$
and a submodule $K=JM$ of $M$. If we take $A=rJ$, then $A^{2}\subseteq
r^{2}JM:M\subseteq(IN:M)=I(N:M)\subseteq I\cap(N:M)$. By Theorem \ref{IM},
$(N:_{R}M)$ is a semi $r$-ideal. We show that $A\cap zd(R)=\{0\}.$ Let $x\in
A\cap zd(R)$, say, $x=ry$ for some $y\in J$. Choose a nonzero $z\in R$ such
that $xz=ryz=0.$ Then $ryzM=0_{M}$ and since $Ann_{M}(r)=0_{M},$ we have
$yzM=0_{M}$. Since $M$ is faithful and $z\neq0$, we conclude that $yM=0_{M}$
and so $y=0$\textbf{. }Thus $x=0$, as required. Since $(N:_{R}M)$ is a semi
$r$-ideal, then $A\subseteq(N:_{R}M)$ by Corollary \ref{coro}. Therefore,
$rK=AM\subseteq(N:_{R}M)M=N$. On the other hand, since $I$ is also a semi
$r$-ideal, we have $A\subseteq I$ and so $rK=AM\subseteq IM$. Since $N$ is
pure, we conclude that $rK\subseteq IM\cap N=IN$ and we are done.

(2) First, by using Lemma \ref{Majed}, we note clearly that $N$ satisfies the
$D$-annihilator condition. We have two cases.

\textbf{Case I. }Let $N=M$. Then $I=I(N:_{R}M)=(IN:_{R}M)$ is a semi $r$-ideal
of $R$ by Corollary \ref{(N:M)}.

\textbf{Case II. }Let $N$ be proper. Observe that by Lemma \ref{Majed}, we
have the equality $(N:_{R}M)=((IN:_{M}I):_{R}M)=(I(N:_{R}M):_{M}I)$. Suppose
that $r^{2}\in(N:_{R}M)$ and $r\notin zd(R)$. Then $(rI)^{2}\subseteq
r^{2}I\subseteq I(N:_{R}M)=(IN:_{R}M)$ by Lemma \ref{Smith}. Here, similar to
the proof of Theorem \ref{IM}, it can be easily verify that $rI\cap
zd(R)=\{0\}$. Since $(IN:_{R}M)$ is a semi $r$-ideal, $rI\subseteq
(IN:_{R}M)=I(N:_{R}M)$ which means $r\in(I(N:_{R}M):_{M}I)=(N:_{R}M)$ by Lemma
\ref{Majed}. Thus, $(N:_{R}M)$ is a semi $r$-ideal of $R$ and Corollary
\ref{(N:M)} implies that $N$ is a semi $r$-submodule of $M$.
\end{proof}

Next, we study the behavior of the semi $r$-submodule property under module homomorphisms.

\begin{proposition}
\label{fsub}Let $M$ and $M^{\prime}$ be $R$-modules and $f:M\rightarrow
M^{\prime}$ be an $R$-module homomorphism.
\end{proposition}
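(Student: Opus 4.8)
The plan is to mirror the proof of Proposition~\ref{f}, splitting into the two cases one expects here: the pullback $f^{-1}(N')$ of a semi $r$-submodule $N'$ of $M'$ along an isomorphism $f$, and the image $f(N)$ of a semi $r$-submodule $N$ of $M$ along an epimorphism $f$ whose kernel lies in $N$, under the extra hypothesis $N\cap T(M)=\{0_M\}$ (the module analogue of the condition $I_1\cap zd(R_1)=\{0\}$ in Proposition~\ref{f}(1)). Both cases are element chases through $f$; the substance lies entirely in transporting the two annihilator conditions $Ann_M(r)=0_M$ and $Ann_R(m)=0$ between $M$ and $M'$.

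\textbf{Pullback case.} Assume $f$ is an isomorphism and $N'$ is a semi $r$-submodule of $M'$; then $f^{-1}(N')$ is proper. Let $r\in R$ and $m\in M$ with $r^2m\in f^{-1}(N')$, $Ann_M(r)=0_M$, $Ann_R(m)=0$. Applying $f$ gives $r^2f(m)=f(r^2m)\in N'$. Since $f$ is an isomorphism, $Ann_{M'}(r)=0_{M'}$ and $Ann_R(f(m))=Ann_R(m)=0$, so the semi $r$-submodule property of $N'$ yields $rf(m)\in N'$, that is $f(rm)\in N'$, hence $rm\in f^{-1}(N')$. This case is purely formal.

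\textbf{Image case.} Assume $f$ is an epimorphism, $Ker(f)\subseteq N$, $N$ is a semi $r$-submodule of $M$, and $N\cap T(M)=\{0_M\}$. First, $f(N)$ is proper: if $f(N)=M'$, then $N=f^{-1}(f(N))=M$ because $Ker(f)\subseteq N$. Now let $r\in R$ and $m'\in M'$ with $r^2m'\in f(N)$, $Ann_{M'}(r)=0_{M'}$, $Ann_R(m')=0$, and choose $m\in M$ with $f(m)=m'$. From $f(r^2m)=r^2m'\in f(N)$ and $Ker(f)\subseteq N$ we obtain $r^2m\in N$. If $am=0$ for $a\in R$, then $am'=f(am)=0$, so $a=0$ since $Ann_R(m')=0$; thus $Ann_R(m)=0$. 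For the annihilator of $r$: if $r=0$, then $rm'=0_{M'}=f(0_M)\in f(N)$ and we are done; if $r\neq0$ and $rn=0$ with $n\in M$, then $rf(n)=f(rn)=0$ forces $f(n)=0$ since $Ann_{M'}(r)=0_{M'}$, so $n\in Ker(f)\subseteq N$, and then $n\neq0$ would give $n\in N\cap T(M)=\{0_M\}$; hence $n=0$ and $Ann_M(r)=0_M$. Applying the semi $r$-submodule property of $N$ to $r$ and $m$ yields $rm\in N$, whence $rm'=f(rm)\in f(N)$.

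The only real obstacle is establishing $Ann_M(r)=0_M$ in the image case: unlike for rings, $Ann_{M'}(r)=0_{M'}$ does not on its own give $Ann_M(r)=0_M$, since an element annihilated by $r$ may also be killed by $f$, and excluding such elements is precisely what $N\cap T(M)=\{0_M\}$ (equivalently $Ker(f)\cap T(M)=\{0_M\}$) buys us. If the proposition is instead phrased with $M$ torsion-free, or with $N$ satisfying the $D$-annihilator condition, the same chase goes through with that hypothesis playing this role; everything else is routine bookkeeping.
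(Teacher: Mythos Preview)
Your proof is correct and follows essentially the same approach as the paper: both cases are element chases through $f$, with the key step in the image case being the verification that $Ann_M(r)=0_M$ via $n\in Ker(f)\cap T(M)\subseteq N\cap T(M)=\{0_M\}$, exactly as the paper does. You add a couple of small points the paper glosses over (properness of $f(N)$, the explicit check that $Ann_R(m)=0$), but otherwise the arguments are identical.
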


\begin{enumerate}
\item If $f$ is an epimorphism and $N$ is a semi $r$-submodule of $M$ such
that $Ker(f)\subseteq N$ and $N\cap T(M)=\left\{  0_{M}\right\}  $, then
$f(N)$ is a semi $r$-submodule of $M^{\prime}$.

\item If $f$ is an isomorphism and $N^{\prime}$ is a semi $r$-submodule of
$M^{\prime}$, then $f^{-1}(N^{\prime})$ is a semi $r$-submodule of $M$.
\end{enumerate}

\begin{proof}
(1). Let $N$ be a semi $r$-submodule of $M$ and $r\in R$, $m^{\prime}:=f(m)\in
M^{\prime}$ $(m\in M)$ such that $r^{2}m^{\prime}\in f(N)$, $Ann_{M^{^{\prime
}\prime}}(r)=0_{M\text{ }^{\prime}}$ and $Ann_{R}(f(m))=0_{M\text{ }^{\prime}%
}$. Then $r^{2}m\in N$ as $Ker(f)\subseteq N$. We show that $Ann_{M}(r)=0_{M}%
$. If $r=0$, then the claim is obvious. Suppose $r\neq0$ and there is
$m_{1}\in M$ such that $rm_{1}=0_{M}$. Then $rf(m_{1})=0_{M^{\prime}}$ and so
$f(m_{1})=0_{M^{\prime}}$ as $Ann_{M^{^{\prime}\prime}}(r)=0_{M\text{
}^{\prime}}$. Thus, $m_{1}\in Ker(f)\cap T(M)\subseteq N\cap T(M)=\left\{
0_{M}\right\}  $ as needed. Also, it is clear that $Ann_{R}(m)=0_{M\text{ }}$.
Therefore, $rm\in N$ and so $rm^{\prime}\in f(N)$ as required.

(2). Let $N^{\prime}$ is a semi $r$-submodule of $M^{\prime}$. Suppose that
$r^{2}m\in f^{-1}(N^{\prime})$, $Ann_{M}(r)=0_{M}$ and $Ann_{R}(m)=0$ for some
$r\in R$ and $m\in M$. Then $r^{2}f(m)=f(r^{2}m)\in N^{\prime}$,
$Ann_{M^{\prime}}(r)=0_{M^{\prime}}$ and $Ann_{R}(f(m))=0$. Indeed, if
$rm^{\prime}=0$ for some $0\neq m^{\prime}=f(m_{1})\in M^{\prime}$, then
$rm_{1}\in K\operatorname{erf}=\{0_{M}\}$ and clearly $0\neq m_{1}\in M$, a
contradiction. Similarly, if there exists $0\neq c\in R$ such that
$cf(m)=0_{M^{\prime}}$, then $cm=0_{M}$ which is also a contradiction. Since
$N^{\prime}$ is a semi $R$-submodule, then $rf(m)\in N^{\prime}$ and so $rm\in
f^{-1}(N^{\prime})$. Thus, $f^{-1}(N^{\prime})$ is a semi $r$-submodule of
$M.$
\end{proof}

In the following, we discuss semi $r$-submodules of localizations of modules.
Here, the notation $Z_{N}(R)$ denotes the set $\{r\in R$: $rm\in N$ for some
$m\in M\backslash N\}.$

\begin{theorem}
\label{SM}Let $S$ be a multiplicatively closed subset of a ring $R$ and $M$ be
an $R$-module such that $S\cap Z(M)=\emptyset$.
\end{theorem}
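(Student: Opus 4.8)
The plan is to mimic the proof of Proposition~\ref{S}, exploiting that the hypothesis $S\cap Z(M)=\emptyset$ forces the canonical map $M\to S^{-1}M$ to be injective and, together with the part-specific hypotheses, lets one transport the two annihilator conditions $Ann_{M}(-)=0_{M}$ (for elements of $R$) and $Ann_{R}(-)=0$ (for elements of $M$) back and forth between $R,M$ and $S^{-1}R,S^{-1}M$.

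For the first part (if $N$ is a semi $r$-submodule of $M$ with $S^{-1}N$ proper, then $S^{-1}N$ is a semi $r$-submodule of the $S^{-1}R$-module $S^{-1}M$), I would take $\frac{r}{s}\in S^{-1}R$ and $\frac{m}{t}\in S^{-1}M$ with $\bigl(\frac{r}{s}\bigr)^{2}\frac{m}{t}\in S^{-1}N$, $Ann_{S^{-1}M}\bigl(\frac{r}{s}\bigr)=0_{S^{-1}M}$ and $Ann_{S^{-1}R}\bigl(\frac{m}{t}\bigr)=0$. Clearing denominators gives $u\in S$ with $ur^{2}m\in N$, hence $(ur)^{2}m=u(ur^{2}m)\in N$. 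The key step is to verify the two annihilator conditions for the pair $(ur,m)$ in $(R,M)$: if $(ur)x=0_{M}$ then, since $\frac{u}{1}$ is a unit of $S^{-1}R$, $\frac{r}{1}\cdot\frac{x}{1}=0$ in $S^{-1}M$, so $\frac{x}{1}\in Ann_{S^{-1}M}\bigl(\frac{r}{s}\bigr)=0_{S^{-1}M}$ and then $S\cap Z(M)=\emptyset$ forces $x=0_{M}$; likewise $cm=0$ pushes down to $\frac{c}{1}\in Ann_{S^{-1}R}\bigl(\frac{m}{t}\bigr)=0$, and the stated hypotheses on $S$ give $c=0$. Since $N$ is a semi $r$-submodule, $(ur)m\in N$, whence $\frac{r}{s}\cdot\frac{m}{t}=\frac{urm}{ust}\in S^{-1}N$; I would also record that $S^{-1}N$ is proper under the hypothesis, so it is a bona fide semi $r$-submodule.

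For the converse, I would begin with $r\in R$, $m\in M$ satisfying $r^{2}m\in N$, $Ann_{M}(r)=0_{M}$ and $Ann_{R}(m)=0$, note that $\bigl(\frac{r}{1}\bigr)^{2}\frac{m}{1}\in S^{-1}N$, and check $Ann_{S^{-1}M}\bigl(\frac{r}{1}\bigr)=0_{S^{-1}M}$ and $Ann_{S^{-1}R}\bigl(\frac{m}{1}\bigr)=0$ (the injectivity coming from $S\cap Z(M)=\emptyset$ again does the work here). Applying the hypothesis that $S^{-1}N$ is a semi $r$-submodule yields $\frac{r}{1}\cdot\frac{m}{1}\in S^{-1}N$, i.e.\ $vrm\in N$ for some $v\in S$; the condition $S\cap Z_{N}(R)=\emptyset$ then lets one cancel $v$ and conclude $rm\in N$, just as in Proposition~\ref{S}(2).

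The step I expect to be delicate is the careful bookkeeping of the two distinct annihilator conditions through the localization in both directions, and in particular pinning down exactly which hypothesis on $S$ is invoked at the cancellation $vc=0,\ v\in S\Rightarrow c=0$ (for instance $S\cap zd(R)=\emptyset$, which is automatic from $S\cap Z(M)=\emptyset$ when $M$ is faithful); everything else is routine manipulation of fractions.
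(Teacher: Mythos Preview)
Your proposal is correct and follows essentially the same route as the paper's proof: clear denominators, transfer the two annihilator conditions between $M$ and $S^{-1}M$ using $S\cap Z(M)=\emptyset$, apply the semi $r$-submodule hypothesis, and then undo the localization (invoking $S\cap Z_{N}(R)=\emptyset$ in part~(2)). The only cosmetic difference is that in part~(1) the paper absorbs the clearing element $u\in S$ into $m$ (working with the pair $(r,um)$) whereas you absorb it into $r$ (working with $(ur,m)$); your flag about the cancellation $vc=0\Rightarrow c=0$ needing something like $S\cap zd(R)=\emptyset$ is well taken and is handled no more explicitly in the paper.
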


\begin{enumerate}
\item If $N$ is a semi $r$-submodule of $M$ such that $(N:_{R}M)\cap
S=\emptyset$, then $S^{-1}N$ is a semi $r$-submodule of $S^{-1}M.$

\item If $S^{-1}N$ is a semi $r$-submodule of $S^{-1}R$ and $S\cap
Z_{N}(R)=\emptyset$, then $N$ is a semi $r$-submodule of $M.$
\end{enumerate}

\begin{proof}
(1)\ Let $\left(  \frac{r}{s}\right)  ^{2}\left(  \frac{m}{t}\right)  \in
S^{-1}N$ with $Ann_{S^{-1}M}(\frac{r}{s})=0_{S^{-1}M}$ and $Ann_{S^{-1}%
R}(\frac{m}{t})=0_{S^{-1}R}$ for some $\frac{r}{s}\in S^{-1}R$ and $\frac
{m}{t}\in S^{-1}M$. Choose $u\in S$ such that $r^{2}(um)\in N$. We show that
$Ann_{M}(r)=0_{M}$ and $Ann_{R}(um)=0.$ First, assume that $rm^{\prime}=0_{M}$
for some $m^{\prime}\in M.$ Then $\left(  \frac{r}{s}\right)  \left(
\frac{m^{\prime}}{1}\right)  =0_{S^{-1}M}$ and so $\frac{m^{\prime}}%
{1}=0_{S^{-1}M}$ as $Ann_{S^{-1}M}(\frac{r}{s})=0_{S^{-1}M}$. Hence, there
exists $v\in S$ such that $vm^{\prime}=0_{M}$. Since $S\cap Z(M)=\emptyset$,
then $m^{\prime}=0_{M}$ and so $Ann_{M}(r)=0_{M}$. Secondly, assume that
$r^{\prime}um=0$ for some $r^{\prime}\in R$. Then $\frac{r^{\prime}u}{1}%
\frac{m}{t}=0_{S^{-1}M}$ and $Ann_{S^{-1}R}(\frac{m}{t})=0_{S^{-1}R}$ imply
that $r^{\prime}us=0$ for some $s\in S$. But, clearly, $um\neq0_{M}$ and so
$us\in S\cap Z(M)=\emptyset$, a contradiction. Hence, $Ann_{R}(um)=0$.
Therefore, $r^{2}(um)\in N$ implies that $rum\in N$ and so $\frac{r}{s}%
\frac{m}{t}=\frac{rum}{sut}\in S^{-1}N$.

(2) Suppose that $r^{2}m\in N$ with $Ann_{M}(r)=0_{M}$ and $Ann_{R}(m)=0$ for
some $r\in R$ and $m\in M.$ Now, $\left(  \frac{r}{1}\right)  ^{2}\frac{m}%
{1}\in S^{-1}N$. If $Ann_{S^{-1}M}(\frac{r}{1})\neq0_{S^{-1}M}$, then there
exists $0_{S^{-1}M}\neq\frac{m^{\prime}}{t}\in S^{-1}M$ such that $\frac{r}%
{1}\frac{m^{\prime}}{t}=0_{S^{-1}M}$ which implies $urm^{\prime}=0_{M}$ for
some $u\in S.$ Since $Ann_{M}(r)=0_{M}$, we have $um^{\prime}=0_{M}$ and
$\frac{m^{\prime}}{t}=\frac{um^{\prime}}{ut}=0_{S^{-1}M}$, a contradiction.
Now, assume that $Ann_{S^{-1}R}(\frac{m}{1})\neq0_{S^{-1}R}.$ Then
$\frac{r^{\prime}}{s^{\prime}}\frac{m}{1}=0_{S^{-1}M}$ for some $0_{S^{-1}%
R}\neq\frac{r^{\prime}}{s^{\prime}}\in S^{-1}R$. Thus, $r^{\prime}vm=0$ for
some $v\in S$ and clearly $r^{\prime}m\neq0_{M}$. Hence, again $v\in S\cap
Z(M)=\emptyset$, a contradiction. Thus, $Ann_{S^{-1}M}(\frac{r}{1}%
)=0_{S^{-1}M}$ and $Ann_{S^{-1}R}(\frac{m}{1})=0_{S^{-1}R}$ imply that
$\frac{r}{1}\frac{m}{1}\in S^{-1}N$ and so $wrm\in N$ for some $w\in S$. Since
$S\cap Z_{N}(M)=\emptyset$, we conclude that $rm\in N$, as desired.
\end{proof}

We recall from \cite{Ande} that for an $R$-module $M$, we have
\[
zd(R(+)M)=\left\{  (r,m)|\text{ }r\in zd(R)\cup Z(M)\text{, }m\in M\right\}
\]
where $Z(M)=\left\{  r\in R:rm=0\text{ for some }0_{M}\neq m\in M\right\}  $.
In the following proposition, we justify the relation between semi $r$-ideals
of $R$ and those of the idealization ring $R(+)M$.

\begin{proposition}
\label{Ide}Let $M$ be an $R$-module and $I$ be a proper ideal of $R$.

\begin{enumerate}
\item If $I$ is a semi $r$-ideal of $R$, then $I(+)M$ is a semi $r$-ideal of
$R(+)M.$ Moreover, the converse is true if $Z(M)\subseteq zd(R)$.

\item If $I$ is a semi $r$-ideal of $R$ and $N$ is an $r$-submodule of $M$,
then $I(+)N$ is a semi $r$-ideal of $R(+)M$. Moreover, the converse is true if
$Z(M)\subseteq zd(R)$.
\end{enumerate}
\end{proposition}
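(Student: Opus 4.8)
The plan is to handle the two parts in parallel, since part (1) is the special case $N=M$ of part (2) once we recall that $M$ itself is trivially an $r$-submodule of $M$. So I would concentrate on part (2). For the forward direction, assume $I$ is a semi $r$-ideal of $R$ and $N$ is an $r$-submodule of $M$, and take $(r,m)\in R(+)M$ with $(r,m)^2=(r^2,\,2rm)\in I(+)N$ and $Ann_{R(+)M}((r,m))=0$. Then $r^2\in I$ and $2rm\in N$. The first observation to nail down is that $Ann_{R(+)M}((r,m))=0$ forces $Ann_R(r)=0$: indeed if $ar=0$ for some $0\neq a\in R$, then $(a,0)(r,m)=(ar,am)=(0,am)$, and one checks this is a nonzero annihilator unless $am=0$ too — but then $a\notin zd(R)$ would be violated... actually the clean way is: $(a,0)(r,m)=(0,am)$, and if $am\neq 0$ then $(0,am)\in Ann$ is already nonzero; if $am=0$ then $(a,0)$ itself is a nonzero annihilator. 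Either way $Ann_{R(+)M}((r,m))\neq 0$, contradiction. So $Ann_R(r)=0$, hence by Theorem \ref{char}(3) (with $k=2$) applied to the semi $r$-ideal $I$ we get $r\in I$. It then remains to deduce $rm\in N$; since $r^2m\in rN\subseteq N$ and $N$ is an $r$-submodule, I need $Ann_M(r)=0_M$, which follows from $Ann_{R(+)M}((r,m))=0$ by a similar computation: $(0,m')\in Ann_{R(+)M}((r,m))$ whenever $rm'=0_M$, so $rm'=0_M$ implies $m'=0_M$. Then $r^2m\in N$, $Ann_M(r)=0_M$, and $N$ an $r$-submodule give $rm\in N$, so $(r,m)\in I(+)N$.

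For the converse direction (assuming $Z(M)\subseteq zd(R)$), suppose $I(+)N$ is a semi $r$-ideal of $R(+)M$. To show $I$ is a semi $r$-ideal of $R$, take $a\in R$ with $a^2\in I$ and $Ann_R(a)=0$. Consider $(a,0)\in R(+)M$: then $(a,0)^2=(a^2,0)\in I(+)N$. I claim $Ann_{R(+)M}((a,0))=0$: if $(b,m)(a,0)=(ab,am)=(0,0)$ then $ab=0$, so $b=0$ since $Ann_R(a)=0$, and $am=0_M$ forces... here is where $Z(M)\subseteq zd(R)$ enters — if $m\neq 0_M$ then $a\in Z(M)\subseteq zd(R)$ contradicting $Ann_R(a)=0$, so $m=0_M$. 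Hence $(a,0)\in I(+)N$, giving $a\in I$. For the $r$-submodule half of the converse in part (2), take $r\in R$, $m\in M$ with $rm\in N$ and $Ann_M(r)=0_M$; I would look at $(r,m)$... but actually the cleanest route is to use that $0(+)N\subseteq I(+)N$ issues, so instead use $(r, m')$ carefully or appeal to the structure: consider $(r,0)$ and the element $(0,m)$ — since $r\cdot m\in N$ we get $(r,0)(0,m)=(0,rm)\in 0(+)N\subseteq I(+)N$, but this is a product not a square. I expect the intended argument for this part is: apply semi $r$-ideal of $I(+)N$ to a suitable square built from $r$ and $m$, perhaps noting $(r,m)^2=(r^2,2rm)$ and running the argument with $r^2m$-type hypotheses, or more likely this half is just deducing $N$ is a semiprime/$r$-submodule directly.

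The main obstacle I anticipate is precisely the bookkeeping around which annihilator conditions transfer between $R(+)M$ and $R$ (and $M$), and in particular getting the converse of part (2) to yield that $N$ is an $r$-submodule — the $r$-submodule condition involves a product $rm$, not a square, so one cannot simply feed it to the semi $r$-ideal definition of $I(+)N$ without some trick (e.g. using an element like $(r, m)$ whose square is $(r^2, 2rm)$ and leveraging $r^2m \in N$, or observing that $N$ inherits being an $r$-submodule because $Z(M)\subseteq zd(R)$ collapses the distinction). I would resolve this by first establishing the clean lemma-style facts $Ann_{R(+)M}((r,m))=0 \iff (Ann_R(r)=0 \text{ and } Ann_M(r)=0_M)$ under $Z(M)\subseteq zd(R)$, which under that hypothesis reduces to $Ann_R(r)=0$, and then all four implications become routine. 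Finally, for the "Moreover" in part (1), I note part (1) is literally part (2) with $N=M$ since $M$ is an $r$-submodule of $M$ (the condition $rm\in M$ is vacuous), so it follows immediately and needs no separate argument.
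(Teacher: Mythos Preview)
Your forward direction of part (2) has a real gap. You correctly obtain $r\in I$ and $Ann_M(r)=0_M$, but the step ``since $r^2m\in rN\subseteq N$'' is unjustified: membership $r^2m\in rN$ would require $rm\in N$, which is exactly what you are trying to prove, so the argument is circular. Moreover, even granting $rm\in N$, that is \emph{not} what is needed: $(r,m)\in I(+)N$ means $r\in I$ and $m\in N$, not $rm\in N$. The paper's fix is to remember that $I(+)N$ being an ideal of $R(+)M$ forces $IM\subseteq N$; since you already have $r\in I$, this gives $rm\in IM\subseteq N$ for free, and then one application of the $r$-submodule property (with $Ann_M(r)=0_M$) yields $m\in N$. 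You never invoke $IM\subseteq N$, and that is the missing ingredient.

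Two smaller points. First, your reduction of (1) to (2) by taking $N=M$ is not quite licit, since by definition an $r$-submodule must be \emph{proper}; the paper simply proves (1) directly (only $a\in I$ is needed, the $M$-coordinate is automatic). Second, the paper streamlines all of your annihilator bookkeeping by quoting the known description $zd(R(+)M)=\{(r,m):r\in zd(R)\cup Z(M)\}$, so that $(r,m)\notin zd(R(+)M)$ immediately gives both $Ann_R(r)=0$ and $Ann_M(r)=0_M$; your from-scratch derivation is correct but unnecessary. As for the converse of (2), you are right that recovering ``$N$ is an $r$-submodule'' from the semi $r$-ideal hypothesis is the delicate part; the paper itself only writes ``similar to the proof of (1)'' and in effect recovers just that $I$ is a semi $r$-ideal, so your identified obstacle is genuine and not resolved in the paper either.
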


\begin{proof}
(1). Suppose that $(a,m)^{2}\in I(+)M$ and $(a,m)\notin zd(R(+)M)$. Then
$a^{2}\in I$ and $a\notin zd(R)$. Since $I$ is a semi $r$-ideal, we conclude
that $a\in I$ and so $(a,m)\in I(+)M$. Now, assume that $Z(M)\subseteq zd(R)$
and $I(+)M$ is a semi $r$-ideal of $R(+)M$. Let $a\in R$ such that $a^{2}\in
I$ but $a\notin I$. Then $(a,0)^{2}\in I(+)M$ and $(a,0)\notin I(+)M$ which
imply that $(a,0)\in zd(R(+)M)$. Since $Z(M)\subseteq zd(R)$, we conclude that
$a\in zd(R)$ and we are done.

(2). Suppose that $(a,m)^{2}\in I(+)N$ and $(a,m)\notin zd(R(+)M)$. Then $a\in
I$ as in (1). Moreover, $a.m\in N$ as $IM\subseteq N$. Since also, $a\notin
Z(M)$, then $Ann_{M}(a)=0$. Therefore, $m\in N$ as $N$ is an $r$-submodule of
$M$ and $(a,m)\in I(+)N$ as needed. If $Z(M)\subseteq zd(R)$, then similar to
the proof of (1), the converse holds.
\end{proof}

\begin{remark}
In general, if $Z(M)\nsubseteq zd(R)$, then the converse of Proposition
\ref{Ide} need not be true. For example, consider the idealization ring $R=%
\mathbb{Z}
(+)%
\mathbb{Z}
_{4}$ and the ideal $4%
\mathbb{Z}
(+)%
\mathbb{Z}
_{4}$ of $R$. Let $(a,m)^{2}\in4%
\mathbb{Z}
(+)%
\mathbb{Z}
_{4}$ for $(a,m)\in R$. Then $a^{2}\in4%
\mathbb{Z}
$ and so $(a,m)\in2%
\mathbb{Z}
\times%
\mathbb{Z}
_{4}=zd(R)$. Thus, $4%
\mathbb{Z}
(+)%
\mathbb{Z}
_{4}$ is a (semi) $r$-ideal of $R$. On the other hand, $4%
\mathbb{Z}
$ is not a semi $r$-ideal of $%
\mathbb{Z}
$.
\end{remark}

\section{Semi $r$-submodules of amalgamated modules}

Let $R$ be a ring, $J$ an ideal of $R$ and $M$ an $R$-module. Recently, in
\cite{Bouba}, the duplication of the $R$-module $M$ along the ideal $J$
(denoted by $M\Join J$) is defined as%
\[
M\Join J=\left\{  (m,m^{\prime})\in M\times M:m-m^{\prime}\in JM\right\}
\]
which is an $(R\Join J)$-module with scaler multiplication defined by
$(r,r+j)\cdot(m,m^{\prime})=(rm,(r+j)m^{\prime})$ for $r\in R$, $j\in J$ and
$(m,m^{\prime})\in M\Join J$. For various properties and results concerning
this kind of modules, one may see \cite{Bouba}.

Let $J$ be an ideal of a ring $R$ and $N$ be a submodule of an $R$-module $M$. Then%

\[
N\Join J=\left\{  (n,m)\in N\times M:n-m\in JM\right\}
\]
and
\[
\bar{N}=\left\{  (m,n)\in M\times N:m-n\in JM\right\}
\]
are clearly submodules of $M\Join J$. Moreover,%

\[
Ann_{R\Join J}(M\Join J)=(r,r+j)\in R\Join I|r\in Ann_{R}(M)\text{ and }j\in
Ann_{R}(M)\cap J\}
\]
and so $M\Join J$ is a faithful $R\Join J$ -module if and only if $M$ is a
faithful $R$-module, \cite[Lemma 3.6]{Bouba}.

In general, let $f:R_{1}\rightarrow R_{2}$ be a ring homomorphism, $J$ be an
ideal of $R_{2}$, $M_{1}$ be an $R_{1}$-module, $M_{2}$ be an $R_{2}$-module
(which is an $R_{1}$-module induced naturally by $f$) and $\varphi
:M_{1}\rightarrow M_{2}$ be an $R_{1}$-module homomorphism. The subring
\[
R_{1}\Join^{f}J=\left\{  (r,f(r)+j):r\in R_{1}\text{, }j\in J\right\}
\]
of $R_{1}\times R_{2}$ is called the amalgamation of $R_{1}$ and $R_{2}$ along
$J$ with respect to $f$. In \cite{Rachida}, the amalgamation of $M_{1}$ and
$M_{2}$ along $J$ with respect to $\varphi$ is defined as%

\[
M_{1}\Join^{\varphi}JM_{2}=\left\{  (m_{1},\varphi(m_{1})+m_{2}):m_{1}\in
M_{1}\text{ and }m_{2}\in JM_{2}\right\}
\]
which is an $(R_{1}\Join^{f}J)$-module with the scaler product defined as
\[
(r,f(r)+j)(m_{1},\varphi(m_{1})+m_{2})=(rm_{1},\varphi(rm_{1})+f(r)m_{2}%
+j\varphi(m_{1})+jm_{2})
\]
For submodules $N_{1}$ and $N_{2}$ of $M_{1}$ and $M_{2}$, respectively, one
can easily justify that the sets
\[
N_{1}\Join^{\varphi}JM_{2}=\left\{  (m_{1},\varphi(m_{1})+m_{2})\in M_{1}%
\Join^{\varphi}JM_{2}:m_{1}\in N_{1}\right\}
\]
and
\[
\overline{N_{2}}^{\varphi}=\left\{  (m_{1},\varphi(m_{1})+m_{2})\in M_{1}%
\Join^{\varphi}JM_{2}:\text{ }\varphi(m_{1})+m_{2}\in N_{2}\right\}
\]
are submodules of $M_{1}\Join^{\varphi}JM_{2}$.

Note that if $R=R_{1}=R_{2}$, $M=M_{1}=M_{2}$, $f=Id_{R}$ and $\varphi=Id_{M}%
$, then the amalgamation of $M_{1}$ and $M_{2}$ along $J$ with respect to
$\varphi$ is exactly the duplication of the $R$-module $M$ along the ideal
$J$. Moreover, in this case, we have $N_{1}\Join^{\varphi}JM_{2}=N\Join J$ and
$\overline{N_{2}}^{\varphi}=\bar{N}$.

\begin{theorem}
\label{Amalg}Consider the $(R_{1}\Join^{f}J)$-module $M_{1}\Join^{\varphi
}JM_{2}$ defined as above. Assume $JM_{2}=\left\{  0_{M_{2}}\right\}  $ and
let $N_{1}$ be submodule of $M_{1}$. Then
\end{theorem}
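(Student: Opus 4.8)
The plan is to exploit the drastic simplification that the hypothesis $JM_{2}=\{0_{M_{2}}\}$ forces on the amalgamated module. First I would record that under this hypothesis every element of $M_{1}\Join^{\varphi}JM_{2}$ has the form $(m_{1},\varphi(m_{1}))$ with $m_{1}\in M_{1}$, and that the scalar action of $(r,f(r)+j)\in R_{1}\Join^{f}J$ collapses to $(r,f(r)+j)(m_{1},\varphi(m_{1}))=(rm_{1},\varphi(rm_{1}))$, which is visibly independent of $j$ because the term $j\varphi(m_{1})$ lies in $JM_{2}=\{0_{M_{2}}\}$. Consequently the map $\theta:m_{1}\mapsto(m_{1},\varphi(m_{1}))$ is a bijection $M_{1}\to M_{1}\Join^{\varphi}JM_{2}$ carrying $N_{1}$ onto $N_{1}\Join^{\varphi}JM_{2}$, and it intertwines the $R_{1}$-action on $M_{1}$ with the $(R_{1}\Join^{f}J)$-action via the ring epimorphism $\pi:R_{1}\Join^{f}J\to R_{1}$, $(r,f(r)+j)\mapsto r$. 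In particular $N_{1}\Join^{\varphi}JM_{2}$ is a proper submodule of $M_{1}\Join^{\varphi}JM_{2}$ exactly when $N_{1}\neq M_{1}$.

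Next I would compute the two annihilators that occur in the definition of a semi $r$-submodule, both taken over $R_{1}\Join^{f}J$. From the displayed action, $(r,f(r)+j)$ annihilates $(m_{1},\varphi(m_{1}))$ precisely when $rm_{1}=0_{M_{1}}$, hence $Ann_{R_{1}\Join^{f}J}((m_{1},\varphi(m_{1})))=\{(r,f(r)+j):r\in Ann_{R_{1}}(m_{1}),\ j\in J\}$ and, dually, $Ann_{M_{1}\Join^{\varphi}JM_{2}}((r,f(r)+j))=0$ if and only if $Ann_{M_{1}}(r)=0_{M_{1}}$. The decisive step is a case split on $J$. If $J\neq\{0_{R_{2}}\}$, then for any $0\neq j_{0}\in J$ the element $(0,j_{0})$ is a nonzero annihilator of every $(m_{1},\varphi(m_{1}))$, so no element of $M_{1}\Join^{\varphi}JM_{2}$ has trivial annihilator; the implication defining a semi $r$-submodule is then vacuously satisfied, and $N_{1}\Join^{\varphi}JM_{2}$ is a semi $r$-submodule exactly when it is proper, i.e.\ exactly when $N_{1}\neq M_{1}$. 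If $J=\{0_{R_{2}}\}$, then $\pi$ is an isomorphism and $\theta$ is an isomorphism of modules over isomorphic rings; transporting the defining condition through $\theta$ and $\pi$ (the annihilator identities above now read $Ann_{R_{1}\Join^{f}J}((m_{1},\varphi(m_{1})))=0$ iff $Ann_{R_{1}}(m_{1})=0$), one verifies coordinate-wise, exactly as in the proof of Proposition \ref{fsub}(2), that $N_{1}\Join^{\varphi}JM_{2}$ is a semi $r$-submodule of $M_{1}\Join^{\varphi}JM_{2}$ if and only if $N_{1}$ is a semi $r$-submodule of $M_{1}$.

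I expect the only real difficulty to be bookkeeping: keeping straight which ring each occurrence of ``$Ann=0$'' refers to, and confirming in the $J=\{0_{R_{2}}\}$ case that each of the three hypotheses $r^{2}m\in N$, $Ann_{M}(r)=0$, $Ann_{R}(m)=0$ corresponds faithfully under $\theta$ and $\pi$. There is none of the ideal/submodule interplay that made Theorems \ref{IM} and \ref{IN} delicate, since $JM_{2}=\{0_{M_{2}}\}$ severs essentially all interaction between the two factors; once the simplified action formula is available, the rest of the argument is essentially forced.
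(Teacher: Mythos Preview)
Your approach is correct for parts (2) and (3) of the theorem and takes a genuinely different route from the paper. The paper argues each direction by direct element-chasing: given the annihilator hypotheses on $(r_{1},f(r_{1})+j)$ and $(m_{1},\varphi(m_{1}))$, it verifies the corresponding hypotheses on $r_{1}$ and $m_{1}$, invokes the assumption on $N_{1}$, and reads the conclusion back. Your case split on whether $J=\{0_{R_{2}}\}$ is sharper: the observation that for $J\neq\{0\}$ the nonzero element $(0,j_{0})$ annihilates every $(m_{1},\varphi(m_{1}))$, making the semi $r$-condition vacuous for any proper submodule, does not appear in the paper and explains transparently why the converse in (3) requires $M_{2}$ faithful (which, as the paper also records, forces $J=\{0\}$). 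Your transport through the bijection $\theta$ and the epimorphism $\pi$ in the $J=\{0\}$ case is equivalent to the paper's computation but packaged more structurally.

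One omission: part (1) of the theorem is about $r$-submodules (not semi $r$-submodules) and asserts a full equivalence under the standing hypothesis $JM_{2}=\{0_{M_{2}}\}$ alone; you do not address it. Your setup disposes of it immediately, however, since the $r$-submodule condition uses only the hypothesis $Ann_{M}(r)=0$, and your identity $Ann_{M_{1}\Join^{\varphi}JM_{2}}((r,f(r)+j))=0\iff Ann_{M_{1}}(r)=0_{M_{1}}$ transfers this faithfully in both directions via $\theta$, with no case split on $J$ needed. You should state this explicitly to cover the full statement.
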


\begin{enumerate}
\item $N_{1}$ is an $r$-submodule of $M_{1}$ if and only if $N_{1}%
\Join^{\varphi}JM_{2}$ is an $r$-submodule of $M_{1}\Join^{\varphi}JM_{2}$.

\item If $N_{1}$ is a semi $r$-submodule of $M_{1}$, then $N_{1}\Join
^{\varphi}JM_{2}$ is a semi $r$-submodule of $M_{1}\Join^{\varphi}JM_{2}$.

\item If $M_{2}$ is faithful and $N_{1}\Join^{\varphi}JM_{2}$ is a semi
$r$-submodule of $M_{1}\Join^{\varphi}JM_{2}$, then $N_{1}$ is a semi
$r$-submodule of $M_{1}$.
\end{enumerate}

\begin{proof}
(1) Let $N_{1}$ be an $r$-submodule of $M_{1}$ and let $(r_{1},f(r_{1})+j)\in
R_{1}\Join^{f}J$, $(m_{1},\varphi(m_{1}))\in M_{1}\Join^{\varphi}JM_{2}$ such
that $(r_{1},f(r_{1})+j)(m_{1},\varphi(m_{1}))\in N_{1}\Join^{\varphi}JM_{2}$
and $Ann_{M_{1}\Join^{\varphi}JM_{2}}((r_{1},f(r_{1})+j))=0_{M_{1}%
\Join^{\varphi}JM_{2}}$. Then $r_{1}m_{1}\in N_{1}$ and we prove that
$Ann_{M_{1}}(r_{1})=0_{M_{1}}$. Suppose $r_{1}m_{1}^{\prime}=0_{M_{1}}$ for
some $m_{1}^{\prime}\in M_{1}$. Then $(r_{1},f(r_{1})+j)(m_{1}^{\prime
},\varphi(m_{1}^{\prime}))=(0_{M_{1}},j\varphi(m_{1}^{\prime}))=(0_{M_{1}%
},0_{M_{2}})$ as $JM_{2}=\left\{  0_{M_{2}}\right\}  $. Thus, $(m_{1}^{\prime
},\varphi(m_{1}^{\prime}))\in Ann_{M_{1}\Join^{\varphi}JM_{2}}((r_{1}%
,f(r_{1})+j))=0_{M_{1}\Join^{\varphi}JM_{2}}$. Hence, $m_{1}^{\prime}%
=0_{M_{1}}$ and $Ann_{M_{1}}(r_{1})=0_{M_{1}}$. By assumption, $m_{1}\in
N_{1}$ and then $(m_{1},\varphi(m_{1}))\in N_{1}\Join^{\varphi}JM_{2}$, as needed.

Conversely, let $r_{1}\in R_{1}$ and $m_{1}\in M_{1}$ such that $r_{1}m_{1}\in
N_{1}$ and $Ann_{M_{1}}(r_{1})=0_{M_{1}}$. Then $(r_{1},f(r_{1}))\in
R_{1}\Join^{f}J$ , $(m_{1},\varphi(m_{1}))\in M_{1}\Join^{\varphi}JM_{2}$ and
$(r_{1},f(r_{1}))(m_{1},\varphi(m_{1}))=(r_{1}m_{1},\varphi(r_{1}m_{1}))\in
N_{1}\Join^{\varphi}JM_{2}$. Moreover, $Ann_{M_{1}\Join^{\varphi}JM_{2}%
}((r_{1},f(r_{1})))=0_{M_{1}\Join^{\varphi}JM_{2}}$. Indeed, suppose that
there $(m_{1}^{\prime},\varphi(m_{1}^{\prime}))\in M_{1}\Join^{\varphi}JM_{2}$
such that $(r_{1},f(r_{1}))(m_{1}^{\prime},\varphi(m_{1}^{\prime}%
))=0_{M_{1}\Join^{\varphi}JM_{2}}$. Then $(m_{1}^{\prime},\varphi
(m_{1}^{\prime}))=(0_{M_{1}},0_{M_{2}})$ as $Ann_{M_{1}}(r_{1})=0_{M_{1}}$.
Since $N_{1}\Join^{\varphi}JM_{2}$ is an $r$-submodule of $M_{1}\Join
^{\varphi}JM_{2}$, then $(m_{1},\varphi(m_{1}))\in N_{1}\Join^{\varphi}JM_{2}$
so that $m_{1}\in N_{1}$ and we are done.

(2) Let $(r_{1},f(r_{1})+j)\in R_{1}\Join^{f}J$ and $(m_{1},\varphi(m_{1}))\in
M_{1}\Join^{\varphi}JM_{2}$ such that $(r_{1},f(r_{1})+j)^{2}(m_{1}%
,\varphi(m_{1}))\in N_{1}\Join^{\varphi}JM_{2}$, $Ann_{M_{1}\Join^{\varphi
}JM_{2}}((r_{1},f(r_{1})+j))=0_{M_{1}\Join^{\varphi}JM_{2}}$ and
$Ann_{R_{1}\Join^{f}J}((m_{1},\varphi(m_{1})))=0_{R_{1}\Join^{f}J}$. Then
$r_{1}^{2}m_{1}\in N_{1}$ and similar to the proof of (1), we have
$Ann_{M_{1}}(r_{1})=0_{M_{1}}$. We show that $Ann_{R_{1}}(m_{1})=0_{R_{1}}.$
Assume on the contrary that there is nonzero element $r_{1}\in R_{1}$ such
that $r_{1}m_{1}=0_{R_{1}}.$ Then, $(r_{1},f(r_{1}))(m_{1},\varphi
(m_{1}))=0_{M_{1}\Join^{\varphi}JM_{2}}$, but our assumption $Ann_{R_{1}%
\Join^{f}J}((m_{1},\varphi(m_{1})))=0_{R_{1}\Join^{f}J}$ implies that
$(r_{1},f(r_{1}))=0_{R_{1}\Join^{f}J}$; i.e. $r_{1}=0_{R_{1}}$, a
contradiction. Thus $Ann_{R_{1}}(m_{1})=0_{R_{1}},$ and it follows that
$r_{1}m_{1}\in N_{1}$ and so $(r_{1},f(r_{1})+j)(m_{1},\varphi(m_{1}%
)+m_{2})\in N_{1}\Join^{\varphi}JM_{2}$.

(3) Since $M_{2}$ is faithful, then clearly $J=\left\{  0_{R_{2}}\right\}  $.
Let $r_{1}\in R_{1}$ and $m_{1}\in M_{1}$ such that $r_{1}^{2}m_{1}\in N_{1}$,
$Ann_{M_{1}}(r_{1})=0_{M_{1}}$ and $Ann_{R_{1}}(m_{1})=0_{R_{1}}$. Then
$(r_{1},f(r_{1}))^{2}(m_{1},\varphi(m_{1}))\in N_{1}\Join^{\varphi}JM_{2}$
where $(r_{1},f(r_{1}))\in R_{1}\Join^{f}J$ and $(m_{1},\varphi(m_{1}))\in
M_{1}\Join^{\varphi}JM_{2}$. Again, similar to the proof of (1), we have
$Ann_{M_{1}\Join^{\varphi}JM_{2}}((r_{1},f(r_{1})))=0_{M_{1}\Join^{\varphi
}JM_{2}}$. Moreover, suppose there is $(r_{1}^{\prime},f(r_{1}^{\prime}))\in
R_{1}\Join^{f}J$ such that $(r_{1}^{\prime}m_{1},\varphi(r_{1}^{\prime}%
m_{1}))=(r_{1}^{\prime},f(r_{1}^{\prime})+j)(m_{1},\varphi(m_{1}%
))=0_{M_{1}\Join^{\varphi}JM_{2}}$. Then $(r_{1}^{\prime},f(r_{1}^{\prime
}))=(0_{R_{1}},0_{R_{2}})$ as $Ann_{R_{1}}(m_{1})=0_{R_{1}}$ and so
$Ann_{R_{1}\Join^{f}J}((m_{1},\varphi(m_{1})))=0_{M_{1}\Join^{\varphi}JM_{2}}%
$. By assumption, $(r_{1},f(r_{1}))(m_{1},\varphi(m_{1}))\in N_{1}%
\Join^{\varphi}JM_{2}$. It follows that $r_{1}m_{1}\in N_{1}$ and $N_{1}$ is a
semi $r$-submodule of $M_{1}$.
\end{proof}

\begin{corollary}
\label{Dup1}Let $N$ be a submodule of an $R$-module $M$ and $J$ be an ideal of
$R$. Then
\end{corollary}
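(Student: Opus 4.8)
The plan is to read this corollary off from Theorem \ref{Amalg} by specializing it to the duplication of a module. I would apply Theorem \ref{Amalg} with $R_{1}=R_{2}=R$, with $f=\mathrm{Id}_{R}$ the identity ring homomorphism, with $M_{1}=M_{2}=M$, and with $\varphi=\mathrm{Id}_{M}$ the identity $R$-module homomorphism. As observed in the paragraph preceding Theorem \ref{Amalg}, under this choice the amalgamated ring $R_{1}\Join^{f}J$ is precisely the amalgamated duplication $R\Join J$, the amalgamated module $M_{1}\Join^{\varphi}JM_{2}$ is precisely the module duplication $M\Join J$, and the submodule $N_{1}\Join^{\varphi}JM_{2}$ becomes $N\Join J$. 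Accordingly, the standing hypothesis $JM_{2}=\{0_{M_{2}}\}$ of the theorem reads here as $JM=\{0_{M}\}$, and the faithfulness of the $R_{2}$-module $M_{2}$ in part (3) reads as the faithfulness of the $R$-module $M$.

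With these identifications in hand, each statement of the corollary is obtained verbatim from the matching statement of Theorem \ref{Amalg}: the equivalence for $r$-submodules is Theorem \ref{Amalg}(1); the implication that a semi $r$-submodule $N$ of $M$ gives rise to a semi $r$-submodule $N\Join J$ of $M\Join J$ is Theorem \ref{Amalg}(2); and the converse implication, under faithfulness of $M$, is Theorem \ref{Amalg}(3).

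The only thing requiring (routine) checking is the compatibility of the module structures: one has to verify that the general scalar action
\[
(r,f(r)+j)(m_{1},\varphi(m_{1})+m_{2})=(rm_{1},\varphi(rm_{1})+f(r)m_{2}+j\varphi(m_{1})+jm_{2})
\]
reduces, for $f=\mathrm{Id}_{R}$ and $\varphi=\mathrm{Id}_{M}$, to the scalar action $(r,r+j)\cdot(m,m')=(rm,(r+j)m')$ that defines the $(R\Join J)$-module $M\Join J$, and likewise that $N_{1}\Join^{\varphi}JM_{2}=N\Join J$ as sets; both facts are already recorded before Theorem \ref{Amalg}. I do not expect any genuine obstacle here, the corollary being a pure specialization of the theorem and introducing no new argument.
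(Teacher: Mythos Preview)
Your specialization plan does not reproduce the corollary as stated. Theorem~\ref{Amalg} carries the blanket hypothesis $JM_{2}=\{0_{M_{2}}\}$, so after setting $M_{2}=M$ you only ever obtain statements under the extra assumption $JM=\{0_{M}\}$. But in Corollary~\ref{Dup1} the \emph{forward} implications in both (1) and (2) --- namely that $N\Join J$ being an $r$-submodule (resp.\ semi $r$-submodule) of $M\Join J$ forces $N$ to be an $r$-submodule (resp.\ semi $r$-submodule) of $M$ --- are asserted with no hypothesis on $JM$ whatsoever. Your route cannot deliver these unconditional directions. You have also misread the converse in part~(2): the corollary requires $JM=0_{M}$ there, not faithfulness of $M$; invoking Theorem~\ref{Amalg}(3) would force $J=0$ once $M$ is faithful and $JM=0$, which is far stronger than what is claimed.

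The paper does not argue by specialization at all; it gives a short direct proof that exploits a feature peculiar to the duplication: the diagonal scalar $(r,r)\in R\Join J$ acts as $r$ on each coordinate of any $(m,m')\in M\Join J$, so $Ann_{M}(r)=0_{M}$ immediately yields $Ann_{M\Join J}((r,r))=0_{M\Join J}$ without any condition on $JM$. (In the general amalgamation one only has $(r_{1},f(r_{1}))$ available, and killing the $JM_{2}$-component of a generic element requires $JM_{2}=0$, which is why Theorem~\ref{Amalg} needs that hypothesis.) The same diagonal trick, applied to $(m,m)$ on the module side, handles the annihilator condition $Ann_{R}(m)=0$ in part~(2). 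For the converse directions the assumption $JM=0_{M}$ is then used exactly as in Theorem~\ref{Amalg}. So the fix is simple: prove the forward implications directly with $(r,r)$ and $(m,m)$, and only then cite (or repeat) the $JM=0_{M}$ argument for the converses.
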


\begin{enumerate}
\item If $N\Join J$ is an $r$-submodule of $M\Join J$, then $N$ is an
$r$-submodule of $M$. The converse is true if $JM=0_{M}$.

\item If $N\Join J$ is a semi $r$-submodule of $M\Join J$, then $N$ is a semi
$r$-submodule of $M$. The converse is true if $JM=0_{M}$.
\end{enumerate}

\begin{proof}
(1) Let $r\in R$ and $m\in M$ such that $rm\in N$ and $Ann_{M}(r)=0_{M}$. Then
$(r,r)(m,m)\in N\Join J$ and clearly, $Ann_{M\Join J}((r,r))=0_{M\Join J}$.
Thus, $(m,m)\in N\Join J$ and so $m\in N$ as needed. Conversely, suppose
$JM=0_{M}$ and let $(r,r+j)\in R\Join J$, $(m,m+m^{\prime})\in M\Join J$ such
that $(r,r+j)(m,m+m^{\prime})\in N\Join J$ and $Ann_{M\Join J}%
((r,r+j))=0_{M\Join J}$. If $rm^{\prime\prime}=0_{M}$ for some $m^{\prime
\prime}\in M$, then $(r,r+j)(m^{\prime\prime},m^{\prime\prime})=(0,jm^{\prime
\prime})=(0_{M},0_{M})$ as $JM=0_{M}$. Thus, $m^{\prime\prime}=0_{M}$ and
$Ann_{M}(r)=0_{M}$. Since $rm\in N$, then $m\in N$ and so $(m,m+m^{\prime})\in
N\Join J$.

(2) Let $r\in R$ and $m\in M$ such that $r^{2}m\in N$, $Ann_{M}(r)=0_{M}$ and
$Ann_{R}(m)=0_{R}$. Then $(r,r)^{2}(m,m)\in N\Join J$. If there exists an
element $(m^{\prime},m^{\prime\prime})$ of $M\Join J$, $(r,r)(m^{\prime
},m^{\prime\prime})=(0_{M},0_{M})$, then clearly $(m^{\prime},m^{\prime\prime
})=(0_{M},0_{M})$ as $Ann_{M}(r)=0_{M}$; and so $Ann_{M\Join J}%
((r,r))=0_{M\Join J}$. Also, if for $(r^{\prime},r^{\prime}+j)\in R\Join J$,
$(r^{\prime},r^{\prime}+j)(m,m)=(0_{M},0_{M})$, then $(r^{\prime},r^{\prime
}+j)=(0_{R},0_{R})$ and $Ann_{R\Join J}((m,m))=0_{R\Join J}$. By assumption,
$(r,r)(m,m)\in N\Join J$ and so $rm\in N$. The proof of the converse part is
similar to that of the converse of (1).
\end{proof}

\begin{theorem}
\label{Amalg2}Consider the $(R_{1}\Join^{f}J)$-module $M_{1}\Join^{\varphi
}JM_{2}$ defined as in Theorem \ref{Amalg} and let $N_{2}$ be a submodule of
$M_{2}$.
\end{theorem}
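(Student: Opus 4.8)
The plan is to follow the proof of Theorem \ref{Amalg}, but now working through the second coordinate of $M_{1}\Join^{\varphi}JM_{2}$ and the submodule $\overline{N_{2}}^{\varphi}$ rather than the first coordinate and $N_{1}\Join^{\varphi}JM_{2}$. The first step is to record the simplifications coming from the standing hypothesis $JM_{2}=\{0_{M_{2}}\}$: every element of $M_{1}\Join^{\varphi}JM_{2}$ is then of the form $(m_{1},\varphi(m_{1}))$, the action of $(r_{1},f(r_{1})+j)\in R_{1}\Join^{f}J$ collapses to $(r_{1},f(r_{1})+j)(m_{1},\varphi(m_{1}))=(r_{1}m_{1},\varphi(r_{1}m_{1}))$ (the terms $f(r_{1})m_{2}$, $j\varphi(m_{1})$, $jm_{2}$ all lie in $JM_{2}$), $(f(r_{1})+j)\varphi(m_{1})=\varphi(r_{1}m_{1})$ by $R_{1}$-linearity of $\varphi$, and $\overline{N_{2}}^{\varphi}=\{(m_{1},\varphi(m_{1})):\varphi(m_{1})\in N_{2}\}$. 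I would also note that under a faithfulness hypothesis one gets $J\subseteq Ann_{R_{2}}(M_{2})=\{0_{R_{2}}\}$, so $J=\{0_{R_{2}}\}$, which pins the two coordinates together, exactly as in the proof of Theorem \ref{Amalg}(3).

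For the forward implication(s) I would take $(r_{1},f(r_{1})+j)\in R_{1}\Join^{f}J$ and $(m_{1},\varphi(m_{1}))\in M_{1}\Join^{\varphi}JM_{2}$ with $(r_{1},f(r_{1})+j)^{2}(m_{1},\varphi(m_{1}))\in\overline{N_{2}}^{\varphi}$ together with the relevant annihilator hypotheses; by the simplification this reads $(f(r_{1})+j)^{2}\varphi(m_{1})=\varphi(r_{1}^{2}m_{1})\in N_{2}$ in $M_{2}$. The crux is transferring the annihilator data down to $M_{2}$: deducing $Ann_{M_{2}}(f(r_{1})+j)=0_{M_{2}}$ from $Ann_{M_{1}\Join^{\varphi}JM_{2}}((r_{1},f(r_{1})+j))=0$, and $Ann_{R_{2}}(\varphi(m_{1}))=0$ (or the $f(R_{1})+J$-analogue) from $Ann_{R_{1}\Join^{f}J}((m_{1},\varphi(m_{1})))=0$. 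I expect this to be the main obstacle, since an element of $M_{2}$ killed by $f(r_{1})+j$ need not lie in $\varphi(M_{1})$ when $\varphi$ is not surjective, and a scalar outside $f(R_{1})+J$ killing $\varphi(m_{1})$ need not produce an element of $R_{1}\Join^{f}J$; this is precisely where I anticipate the extra hypothesis appearing in the statement (faithfulness, or surjectivity of $\varphi$, parallel to Theorem \ref{Amalg}(3)) is used, via a short argument of the type carried out in the proof of Theorem \ref{Amalg}. Once those conditions hold in $M_{2}$, the hypothesis that $N_{2}$ is a semi $r$-submodule of $M_{2}$ gives $(f(r_{1})+j)\varphi(m_{1})=\varphi(r_{1}m_{1})\in N_{2}$, hence $(r_{1},f(r_{1})+j)(m_{1},\varphi(m_{1}))=(r_{1}m_{1},\varphi(r_{1}m_{1}))\in\overline{N_{2}}^{\varphi}$; the plain $r$-submodule statement is the identical computation with the square deleted.

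For the converse direction(s) I would start from $r_{1}\in R_{1}$ and an element of $M_{2}$ (or of $M_{1}$) witnessing the defining condition of a (semi) $r$-submodule in $M_{2}$, lift to $(m_{1},\varphi(m_{1}))\in M_{1}\Join^{\varphi}JM_{2}$ and the scalar $(r_{1},f(r_{1}))\in R_{1}\Join^{f}J$, check that the annihilator conditions propagate upward using the faithfulness hypothesis (which forces $J=\{0_{R_{2}}\}$ so that the two coordinates move together, as in the proof of Theorem \ref{Amalg}(3)), invoke that $\overline{N_{2}}^{\varphi}$ is a (semi) $r$-submodule of $M_{1}\Join^{\varphi}JM_{2}$, and read the conclusion off the second coordinate. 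Finally I would derive the specialization to the module duplication $M\Join J$ by taking $R_{1}=R_{2}=R$, $M_{1}=M_{2}=M$, $f=\mathrm{Id}_{R}$ and $\varphi=\mathrm{Id}_{M}$, so that $\overline{N_{2}}^{\varphi}=\bar{N}$; this should come out as a corollary just as Corollary \ref{Dup1} followed from Theorem \ref{Amalg}.
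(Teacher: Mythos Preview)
Your proposal rests on a misreading of the hypotheses. The phrase ``defined as in Theorem \ref{Amalg}'' refers only to the construction of $M_{1}\Join^{\varphi}JM_{2}$, not to the standing assumption $JM_{2}=\{0_{M_{2}}\}$. In fact part (1) of Theorem \ref{Amalg2} explicitly assumes $JM_{2}\neq\{0_{M_{2}}\}$ together with $T(M_{2})\subseteq JM_{2}$, and part (2) assumes that $f$ and $\varphi$ are isomorphisms. Consequently your initial ``simplification'' that every element of $M_{1}\Join^{\varphi}JM_{2}$ has the form $(m_{1},\varphi(m_{1}))$ is false here: a generic element is $(m_{1},\varphi(m_{1})+m_{2})$ with $m_{2}\in JM_{2}$ possibly nonzero, and the scalar action does not collapse as you describe.

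This matters precisely at the step you correctly identify as the crux: showing $Ann_{M_{2}}(f(r_{1})+j)=0_{M_{2}}$ from $Ann_{M_{1}\Join^{\varphi}JM_{2}}((r_{1},f(r_{1})+j))=0$. The paper's argument splits according to whether a putative annihilator $m_{2}'\in M_{2}$ lies in $JM_{2}$ or not. If $m_{2}'\in JM_{2}$, then $(0_{M_{1}},m_{2}')$ is a nonzero element of $M_{1}\Join^{\varphi}JM_{2}$ killed by $(r_{1},f(r_{1})+j)$. If $m_{2}'\notin JM_{2}$, the hypothesis $T(M_{2})\subseteq JM_{2}$ forces $f(r_{1})+j=0_{R_{2}}$, and then any nonzero $m_{2}''\in JM_{2}$ (which exists since $JM_{2}\neq\{0\}$) yields the contradiction. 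Your plan, based on $JM_{2}=\{0\}$, provides no mechanism for the second case and would fail. Also note that the theorem does \emph{not} contain a forward semi $r$-submodule implication: part (2) only asserts that if $\overline{N_{2}}^{\varphi}$ is a semi $r$-submodule (with $f,\varphi$ isomorphisms) then $N_{2}$ is; your ``forward'' semi $r$-submodule argument is aimed at a statement the paper does not claim.
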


\begin{enumerate}
\item If $N_{2}$ is an $r$-submodule of $M_{2}$, $JM_{2}\neq\left\{  0_{M_{2}%
}\right\}  $ and $T(M_{2})\subseteq JM_{2}$, then $\overline{N_{2}}^{\varphi}$
is an $r$-submodule of $M_{1}\Join^{\varphi}JM_{2}$. Moreover, if $f$ is an
epimorphism and $\varphi$ is an isomorphism, then the converse holds.

\item If $f$ and $\varphi$ are isomorphisms and $\overline{N_{2}}^{\varphi}$
is a semi $r$-submodule of $M_{1}\Join^{\varphi}JM_{2}$, then $N_{2}$ is a
semi $r$-submodule of $M_{2}$.
\end{enumerate}

\begin{proof}
(1). Suppose $N_{2}$ is an $r$-submodule of $M_{2}$. Let $(r_{1}%
,f(r_{1})+j)\in R_{1}\Join^{f}J$ and $(m_{1},\varphi(m_{1})+m_{2})\in
M_{1}\Join JM_{2}$ such that $(r_{1},f(r_{1})+j)(m_{1},\varphi(m_{1}%
)+m_{2})\in\overline{N_{2}}^{\varphi}$ and $Ann_{M_{1}\Join^{\varphi}JM_{2}%
}((r_{1},f(r_{1})+j))=0_{M_{1}\Join^{\varphi}JM_{2}}$. Then $(f(r_{1}%
)+j)(\varphi(m_{1})+m_{2})\in N_{2}$ and $Ann_{M_{2}}((f(r_{1})+j))=0_{M_{2}}%
$. Indeed, suppose $(f(r_{1})+j)m_{2}^{\prime}=0_{M_{2}}$ for some $0_{M_{2}%
}\neq m_{2}^{\prime}\in M_{2}$. If $m_{2}^{\prime}\in JM_{2}$, then
$(r_{1},f(r_{1})+j)(0_{M_{1}},0_{M_{2}}+m_{2}^{\prime})=0_{M_{1}\Join JM_{2}}$
where $(0_{M_{1}},0_{M_{2}}+m_{2}^{\prime})\neq0_{M_{1}\Join JM_{2}}$, a
contradiction. If $m_{2}^{\prime}\notin JM_{2}$, then $m_{2}^{\prime}\notin
T(M_{2})$ and so $(f(r_{1})+j)=0_{R_{2}}$. If we choose $0\neq m_{2}%
^{\prime\prime}\in JM_{2}$, then $(r_{1},f(r_{1})+j)(0_{M_{1}},m_{2}%
^{\prime\prime})=0_{M_{1}\Join JM_{2}}$ which is also a contradiction. By
assumption, $\varphi(m_{1})+m_{2})\in N_{2}$ and so $(m_{1},\varphi
(m_{1})+m_{2})\in\overline{N_{2}}^{\varphi}$.

Conversely, suppose $\varphi$ is an isomorphism and $\overline{N_{2}}%
^{\varphi}$ is an $r$-submodule of $M_{1}\Join^{\varphi}JM_{2}$. Let
$r_{2}=f(r_{1})\in R_{2}$ and $m_{2}=\varphi(m_{1})\in M_{2}$ such that
$r_{2}m_{2}\in N_{2}$ and $Ann_{M_{2}}(r_{2})=0_{M_{2}}$. Then $(r_{1}%
,r_{2})\in R_{1}\Join^{f}J$, $(m_{1},m_{2})\in M_{1}\Join^{\varphi}JM_{2}$ and
$(r_{1},r_{2})(m_{1},m_{2})\in\overline{N_{2}}^{\varphi}$. Suppose on contrary
that there is $(m_{1}^{\prime},\varphi(m_{1}^{\prime})+m_{2}^{\prime}%
)\neq0_{M_{1}\Join^{\varphi}JM_{2}}$ such that $(r_{1},r_{2})(m_{1}^{\prime
},\varphi(m_{1}^{\prime})+m_{2}^{\prime})=0_{M_{1}\Join^{\varphi}JM_{2}}$. If
$\varphi(m_{1}^{\prime})+m_{2}^{\prime}\neq0_{M_{2}}$, we get a contradiction.
If $\varphi(m_{1}^{\prime})+m_{2}^{\prime}=0_{M_{2}}$ (and so $m_{1}^{\prime
}\neq0_{M_{1}}$), then clearly $r_{2}m_{2}^{\prime}=0_{M_{2}}$ and then
$m_{2}^{\prime}=0_{M_{2}}$. It follows that $\varphi(m_{1}^{\prime})=0_{M_{2}%
}$ and so $m_{1}^{\prime}=0_{M_{1}}$, a contradiction. Since $\overline{N_{2}%
}^{\varphi}$ is an $r$-submodule of $M_{1}\Join^{\varphi}JM_{2}$, then
$(m_{1},m_{2})\in\overline{N_{2}}^{\varphi}$ and so $m_{2}\in N_{2}$ as required.

(3) Let $r_{2}=f(r_{1})\in R_{2}$ and $m_{2}=\varphi(m_{1})\in M_{2}$ such
that $r_{2}^{2}m_{2}\in N_{2}$, $Ann_{M_{2}}(r_{2})=0_{M_{2}}$ and
$Ann_{R_{2}}(m_{2})=0_{R_{2}}$. Then $(r_{1},r_{2}))^{2}(m_{1},m_{2}%
)\in\overline{N_{2}}^{\varphi}$ where $(r_{1},f(r_{1}))\in R_{1}\Join^{f}J$
and $(m_{1},\varphi(m_{1}))\in M_{1}\Join^{\varphi}JM_{2}$. Similar to the
proof of the converse part of (1), we have $Ann_{M_{1}\Join^{\varphi}JM_{2}%
}((r_{1},r_{2}))=0_{M_{1}\Join^{\varphi}JM_{2}}$. We prove that $Ann_{R_{1}%
\Join^{f}J}((m_{1},m_{2}))=0_{R_{1}\Join^{f}J}$. Let $(r_{1}^{\prime}%
,f(r_{1}^{\prime})+j^{\prime})\in R_{1}\Join^{f}J$ such that $(r_{1}^{\prime
},f(r_{1}^{\prime})+j^{\prime})(m_{1},m_{2})=0_{M_{1}\Join^{\varphi}JM_{2}}$.
Then $f(r_{1}^{\prime})+j^{\prime}=0_{R_{2}}$ and $r_{1}^{\prime}%
m_{1}=0_{M_{1}}$. Thus, $f(r_{1}^{\prime})m_{2}=0$ and so $f(r_{1}^{\prime
})=0_{R_{2}}$. Since $f$ is one to one, then $r_{1}^{\prime}=0_{R_{1}}$ and so
$(r_{1}^{\prime},f(r_{1}^{\prime})+j^{\prime})=0_{R_{1}\Join^{f}J}$ as needed.
By assumption, $(r_{1},r_{2}))(m_{1},m_{2})\in\overline{N_{2}}^{\varphi}$ and
so $r_{2}m_{2}\in N_{2}$.
\end{proof}

\begin{corollary}
\label{Dup2}Let $N$ be a submodule of an $R$-module $M$ and $J$ be an ideal of
$R$. Then
\end{corollary}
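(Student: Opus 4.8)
The plan is to specialize Theorem \ref{Amalg2} to the case $R_1=R_2=R$, $M_1=M_2=M$, $f=\mathrm{Id}_R$, $\varphi=\mathrm{Id}_M$, exactly as was done in Corollary \ref{Dup1} for Theorem \ref{Amalg}. Recall from the discussion preceding Theorem \ref{Amalg} that under these identifications we have $M_1\Join^\varphi JM_2 = M\Join J$ and $\overline{N_2}^{\varphi}=\bar N$, while $R_1\Join^f J = R\Join J$. So each statement of the corollary should be the translation of the corresponding part of Theorem \ref{Amalg2} into the language of $\bar N \subseteq M\Join J$, with the hypothesis ``$JM_2\neq\{0_{M_2}\}$'' replaced by ``$JM\neq 0_M$'' and ``$T(M_2)\subseteq JM_2$'' by ``$T(M)\subseteq JM$'' in part (1), and with the isomorphism hypotheses on $f$ and $\varphi$ in part (2) now automatically satisfied since both maps are the identity.

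Concretely, I expect Corollary \ref{Dup2} to read: (1) If $N$ is an $r$-submodule of $M$, $JM\neq 0_M$ and $T(M)\subseteq JM$, then $\bar N$ is an $r$-submodule of $M\Join J$; and, since $\mathrm{Id}_R$ is an epimorphism and $\mathrm{Id}_M$ is an isomorphism, the converse also holds. (2) If $\bar N$ is a semi $r$-submodule of $M\Join J$, then $N$ is a semi $r$-submodule of $M$. The proof of each part is then a direct application of the corresponding part of Theorem \ref{Amalg2}: for part (1) one invokes Theorem \ref{Amalg2}(1), noting that $f=\mathrm{Id}_R$ is surjective and $\varphi=\mathrm{Id}_M$ is bijective, and for part (2) one invokes Theorem \ref{Amalg2}(2), noting that $f$ and $\varphi$ are isomorphisms. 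No fresh computation is needed beyond unwinding the identifications.

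The only mild subtlety — and the one place I would be careful — is to make sure the translation of the submodule notation is right: $\overline{N_2}^{\varphi}$ with $\varphi=\mathrm{Id}_M$ becomes $\{(m,n)\in M\times N : m-n\in JM\} = \bar N$, which matches the definition given in the text just before Theorem \ref{Amalg}. One should also double-check that the module structure on $M\Join J$ coincides with the specialization of the scalar product on $M_1\Join^\varphi JM_2$; this is recorded in the paragraph ``Note that if $R=R_1=R_2,\dots$'' preceding Theorem \ref{Amalg2}, so it is already justified. Hence the corollary is essentially a restatement and its proof is one line per part. I would write:

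\begin{proof}
Both parts follow immediately from the corresponding parts of Theorem \ref{Amalg2} by taking $R_{1}=R_{2}=R$, $M_{1}=M_{2}=M$, $f=\mathrm{Id}_{R}$ and $\varphi=\mathrm{Id}_{M}$, so that $M_{1}\Join^{\varphi}JM_{2}=M\Join J$ and $\overline{N_{2}}^{\varphi}=\bar{N}$. In part (1), $f$ is an epimorphism and $\varphi$ is an isomorphism, so Theorem \ref{Amalg2}(1) gives both implications; in part (2), $f$ and $\varphi$ are isomorphisms, so Theorem \ref{Amalg2}(2) applies.
\end{proof}
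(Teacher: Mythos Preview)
Your guess about the content of the corollary is off, and as a result the specialization argument does not prove what the paper actually states. Corollary~\ref{Dup2} is meant to parallel Corollary~\ref{Dup1}, not to be a literal restatement of Theorem~\ref{Amalg2}: its two parts read (1) if $\bar N$ is an $r$-submodule of $M\Join J$ then $N$ is an $r$-submodule of $M$, with the converse holding when $JM=0_M$; and (2) the same statement with ``semi $r$-submodule'' in place of ``$r$-submodule''. The hypotheses you predicted for part~(1), namely $JM\neq 0_M$ and $T(M)\subseteq JM$, are precisely the \emph{opposite} of what is assumed for the converse; and in part~(2) there is a converse direction that you omitted entirely.

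Consequently, invoking Theorem~\ref{Amalg2} does not suffice. The forward implications of both parts do indeed drop out of the converse clauses of Theorem~\ref{Amalg2}(1) and Theorem~\ref{Amalg2}(2) once $f=\mathrm{Id}_R$ and $\varphi=\mathrm{Id}_M$. But the converse implications of Corollary~\ref{Dup2} --- that $N$ an $r$-submodule (resp.\ semi $r$-submodule) and $JM=0_M$ force $\bar N$ to be an $r$-submodule (resp.\ semi $r$-submodule) --- cannot be extracted from Theorem~\ref{Amalg2}: its part~(1) requires $JM_2\neq\{0\}$ for that direction, and it contains no statement at all passing from $N_2$ a semi $r$-submodule to $\overline{N_2}^{\varphi}$ a semi $r$-submodule. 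The paper's proof accordingly does not specialize Theorem~\ref{Amalg2}; it says the argument is similar to that of Corollary~\ref{Dup1}, i.e.\ a short direct verification working with elements $(r,r+j)\in R\Join J$ and $(m,m+m')\in M\Join J$ and using $JM=0_M$ to control the second coordinate. You should redo the proof along those lines.
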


\begin{enumerate}
\item If $\bar{N}$ is an $r$-submodule of $M\Join J$, then $N$ is an
$r$-submodule of $M$. The converse is true if $JM=0_{M}$.

\item If $\bar{N}$ is a semi $r$-submodule of $M\Join J$, then $N$ is a semi
$r$-submodule of $M$. The converse is true if $JM=0_{M}$.
\end{enumerate}

\begin{proof}
The proof is similar to that of Corollary \ref{Dup1} and left to the reader.
\end{proof}

\textbf{Statements \& Declarations}

The authors declare that no funds, grants, or other support were received
during the preparation of this manuscript. The authors have no relevant
financial or non-financial interests to disclose. All authors read and
approved the final manuscript.,

\end{document}